\newcommand\blfootnote[1]{%
  \begingroup
  \renewcommand\thefootnote{}\footnote{#1}%
  \addtocounter{footnote}{-1}%
  \endgroup
}
\providecommand{\U}[1]{\protect\rule{.1in}{.1in}}
\newtheorem*{rep@theorem}{\rep@title}
\newcommand{\newreptheorem}[2]{%
\newenvironment{rep#1}[1]{%
 \def\rep@title{#2 \ref{##1}}%
 \begin{rep@theorem}}%
 {\end{rep@theorem}}}
\newtheorem*{rep@corollary}{\rep@title}
\newcommand{\newrepcorollary}[2]{%
\newenvironment{rep#1}[1]{%
 \def\rep@title{#2 \ref{##1}}%
 \begin{rep@corollary}}%
 {\end{rep@corollary}}}
\newtheorem{theorem}{Theorem}[section]
\newtheorem{corollary}[theorem]{Corollary}
\newtheorem{definition}[theorem]{Definition}
\newtheorem{lemma}[theorem]{Lemma}
\newtheorem{question}[theorem]{Question}
\newtheorem{proposition}[theorem]{Proposition}
\newtheorem{remark}[theorem]{Remark}
\newcommand{\N}{\mathbb{N}} 
\newcommand{\K}{\mathcal{K}}
\newcommand{\C}{\mathcal{C}}
\newcommand{\Kcal}{\mathcal{K}}
\newcommand{\cQ}{\mathcal{Q}}
\title{Local entropy theory and descriptive complexity}
\author{Udayan B. Darji and Felipe García-Ramos}
\date{}
\begin{document}

\maketitle
\blfootnote{\emph{2020 Mathematics Subject Classification}. Primary: 37B40, 37E05, 03E15, 28A05, 54H05. \newline Secondary: 37B65, 37E25, 37B02. }
\blfootnote{\emph{Keywords:} topological entropy, completely positive entropy, uniform positive entropy, 
complete coanalytic, IE-pairs, 
shadowing.}
\abstract{
We investigate local entropy theory, particularly the property of having completely positive entropy (CPE), from a descriptive set-theoretic point of view. We aim to determine descriptive complexity of different families of dynamical systems with CPE. 

}

For a large class of compact $X$, we show that the family of dynamical systems on $X$ with CPE is complete coanalytic and hence not Borel. When we restrict our attention to dynamical systems having  special properties such as the mixing property or the shadowing property, we obtain some contrasting behavior. In particular, the notion of CPE and the notion of uniform positive entropy, a Borel property,
coincide for mixing maps on topological graphs. On the other hand, the class of mixing map on the Cantor space is coanalytic and not Borel. 

For dynamical systems with the shadowing property, the notions CPE and uniform positive entropy coincide regardless of the phase space.

\section{Introduction}
Take two families of dynamical systems; how can we formally compare  the dynamical diversity/complexity of the chaotic behavior of each family? In this paper we study the chaotic components of a system in the framework of local entropy theory, and we use descriptive set theory to formally compare their complexity.

Local entropy theory is a culmination of deep results in combinatorics and topological dynamics. It gives us a way to understand ``where" the entropy lies within a system and helps us  recognize combinatorially
the emergence of entropy using the notion of independence. We refer the reader to the survey article by
Glasner and Ye \cite{glasner2009local} and the book by Kerr and Li \cite[Chapter 12]{kerr2016ergodic} for more information on the subject.  It turns out that local entropy theory is a powerful tool that can be applied in a variety of settings. For example, local entropy theory provides a direct and an intuitive reason as to why positive entropy is stronger than Li-Yorke chaos \cite{blanchard2002li,KerrLiMA}.  It can also be used for finding homoclinic points of algebraic actions \cite{chungLi2015,barbieri2019markovian}. In another application of this theory, the first author and Kato \cite{darji2017chaos} settled some
old problems concerning indecomposable continua in dynamical systems.

Local entropy theory was introduced by Blanchard \cite{blanchard93}. His original motivation was to understand the topological analogues of $K$-systems, i.e., ergodic systems  whose non-trivial factors have positive entropy.  To this end, Blanchard gave two possible interpretations: uniform positive entropy (UPE) and complete positive entropy (CPE). Blanchard realized that both of these conditions can be characterized using a new local concept. Thus was born the notion of \emph{entropy pairs} and the area of local entropy theory. This theory is useful, not only for understanding positive entropy, but also for understanding zero entropy systems such as null and tame systems. Building up on seminal work of various mathematicians and developing new elegant ideas, Huang and Ye \cite{huang2006local}, and Kerr and Li \cite{KerrLiMA} provided a unified approach to local entropy theory. This unified approach is of combinatorial nature and uses the concept of independence.   

In his original article \cite{Blanchard1992}, Blanchard showed that every topological dynamical system (TDS) that has UPE also has CPE but the converse is not true. Using a natural rank called the  $\Gamma$-rank, Barbieri and the second author \cite{barbieri2020} constructed a transfinite hierarchy of families that have CPE but not UPE. This was the first inkling of the relationship between local entropy theory and descriptive set theory. 

Topological dynamics and ergodic theory are sibling branches of dynamical systems in which several notions can be translated from one branch to the other (for surveys on these connections see \cite{glasner2006interplay,huangsurvey}).
For a fixed compact metrizable space $X$, respectively a probability space $X$, we can consider the space of all topological dynamical systems, respectively all measure-preserving systems, on $X$. Equipped with a suitable natural topology, such collections form Polish spaces. Now one can ask several interesting questions. For example, how big is a subcollection of dynamical systems with a particular property? or what is its descriptive complexity? In particular, is it Borel? 

The study of such questions  has a long history. In the context of ergodic theory,  we have a result of Halmos  of 1944 \cite{halmos1944general} which states that among measure preserving transformations those that are  weak-mixing forms a residual $G_{\delta}$ set.
The first ``anti-classification" or non-Borel result in dynamics emerged in a paper by Beleznay and Foreman \cite{beleznay1995collection} where they proved that the collection of distal topological dynamical systems is complete coanalytic and hence not Borel.

From the descriptive complexity viewpoint, most of the classical families of measurable chaotic dynamical systems are Borel: mixing \cite{rohlin1948general}, Bernoulli \cite{feldman1974borel}, positive entropy and K-systems \cite{foreman2000descriptive}. In the context of topological dynamics, it is fairly routine to show that families of systems which are 
weak-mixing, mixing, Bernoulli shifts and have  positive entropy are all Borel. Now, as exhibited by Blanchard, K-systems have at least two distinct interpretations in topological dynamics, namely UPE and CPE. It turns out that UPE again is Borel, Corollary~\ref{cor:upeborel}. Nonetheless, it follows from results of  Barbieri and the second author \cite{barbieri2020}, and Westrick \cite{westrick2019topological}, that CPE is not Borel. Informally speaking, this implies that no amount of inherently countable resources can settle the question of whether a TDS has CPE or not.



 There are also some recent results regarding effective (or light face) descriptive set theory, CPE, and symbolic dynamics.
The collection of SFTs is a countable set and hence $F_{\sigma}$. Thus, effective descriptive set theory is the appropriate context for determining their complexity. It follows from a result of Pavlov \cite{pavlov2018topologically}, that SFTs with zero-dimensional CPE (factors having dimension zero) is effectively Borel. Meanwhile, Westrick \cite{westrick2019topological} showed that the family of $\mathbb{Z}^{2}$-subshifts of finite type (SFT) having CPE is effectively complete coanalytic. Returning to classical descriptive set theory,
Salo \cite{salo2019entropy} constructed subshifts on ${\mathbb Z}$ that have CPE and arbitrarily high entropy rank; this result can be used to show that among the space of all ${\mathbb Z}$-subshifts, the ones having CPE form a non-Borel collection.


In this article we initiate a systematic study of relationship between local entropy theory, in particular properties of UPE and CPE, and descriptive complexity. As mentioned earlier, the collection of systems on a Cantor space having CPE is coanalytic and not Borel \cite{barbieri2019markovian}, \cite{westrick2019topological} and this can be further refined by making the maps shift maps \cite{salo2019entropy}, \cite{westrick2019topological}. At this point two questions present themselves: does the descriptive complexity change as we vary the underlying phase space and as we require extra conditions on the map.

Our first result is that for a very general class of phase spaces, if the maps are left unrestricted, then the systems having CPE form a complete coanalytic set. (Recall that all complete coanalytic set are non-Borel.) Below we state it for the interval in ${\mathbb R}^n$ and in Remark~\ref{rem:topmanifolds}, we explain how to obtain the result for graphs and orientable manifolds.

\begin{repTheorem}{intcomplete}
For every $d\geq 1$ we have that $\textup{CPE}(I^d)$ is complete coanalytic.
\end{repTheorem}

A curious phenomenon occurs when we require the map to have finer properties. A result of Blokh \cite{blokh1984transitive} implies that for the interval and, in general, any topological graph $X$, every mixing system having CPE  also has UPE. As the collection of UPE is Borel, we have the following corollary.
\begin{repCorollary}{cor:graphBorel}
Let $X$ be a topological graph. Then $\textup{Mix}(X)\cap \textup{CPE}(X)$ is Borel.
\end{repCorollary}
A similar situation occurs for maps with the shadowing property, that is, the properties of UPE and CPE coincide, Corollary~\ref{upeshadow}, however, this time on any compact metrizable space. As such we have the following corollary.
\begin{repCorollary}{shadowcpeborel}
Let $X$ be a compact metrizable space. Then $\textup{Shad}(X)\cap \textup{CPE}(X)$ is Borel. 
\end{repCorollary}
It may seem that for maps which are of most interest to dynamicists such as transitive, mixing, chaotic, etc, the property of having CPE is Borel.
The main result of this article is to show that this is false. Namely, the following holds.
\begin{repTheorem}{mainmixincpe}
Let $X$ be a Cantor space. Then, $\textup{Mix}(X) \cap \textup{CPE}(X)$ is  coanalytic and not Borel. 
\end{repTheorem}





The proof of Theorem~\ref{intcomplete} is short and somewhat easy to verify once the main idea is stated.
On the other hand, the proof of Theorem~\ref{mainmixincpe} is rather delicate and involved, {as evident in Section~\ref{topcon}}.  It is not surprising that proving that a family of dynamical systems is not Borel is technically  difficult for transitive and mixing maps. Usually, the easiest path to proving that a family of non-reducible systems (i.e. non-ergodic or non-transitive) is not Borel, is to build examples with countably many components (as done in \cite{barbieri2020} and in Section 3). This is analogous to what happened for the conjugacy problem in ergodic theory. First, Hjorth proved that the equivalence relation generated by the conjugacy of measure preserving transformation is not Borel \cite{hjorth2001invariants}. Later, Foreman, Rudolph and Weiss \cite{foreman2011conjugacy}, using a much more delicate construction, proved the same  for ergodic systems.




The paper is organized as follows. In Section 2 we give a somewhat detailed introduction to local entropy theory and descriptive set theory so that the article is accessible. In Section 3 we investigate relationship between shadowing and local entropy. In Section 4 we study the complexity of systems with CPE on  topological manifolds. In Section 5 we prove the main result of the paper. Several open questions arise from the above which will be discussed in Section 6. 


\textbf{Acknowledgment:} The authors would like to thank Dominik Kwietniak,
Slawomir Solecki and Linda Westrick for motivating conversations. 
The second author was supported by the CONACyT grant 287764.

\section{Preliminaries}
In this section we will give the basic definitions needed for the paper, as well as an introduction to local entropy theory and descriptive set theory for non-experts. 

Let $Z$ be a set and $F\subseteq Z^2$. We define the transitive closure of $F$ as follows
\[
F^{+}=\{(u,v)\in Z^2:\exists u=u_1,...,u_n=v \text{ s.t. } (u_i,u_{i+1})\in F\}\text{, and}
\] 
\[
\Delta_Z=\{(u,u):u\in Z\}.
\]
A set $F\subseteq Z^{2}$ is an equivalence relation if and only if $F$ is symmetric,
$\Delta_{Z}\subseteq F$, and $F=F^{+}$. In this case we denote the equivalence
class of $z\in Z$ with $[z]_{F}$.

Throughout this paper, $X$ is always a compact metrizable space with a compatible metric $d$. 

Let $E\subseteq X^{2}$ be a symmetric set. We define
\[
\Gamma(E)=\overline{E^{+}\cup \Delta_X}.
\]
For an ordinal $\alpha$, $\Gamma^{\alpha}(E)$ is defined by 
\[
\Gamma^{\alpha}(E) = \Gamma( \Gamma^{\alpha -1}(E )),
\]
if $\alpha$ is the successor ordinal and 
\[
\Gamma^{\alpha}(E) = \overline {\cup_{\beta < \alpha} \Gamma^{\beta}(E)},
\]
if $\alpha$ is a limit ordinal.  

{As every compact metric space has a countable basis, we have that every strictly increasing transfinite sequence of closed sets must be countable.} 

From this we have the following.
\begin{proposition}
Let $X$ be a compact metrizable space and $E\subseteq X^2$ symmetric.
There exists a countable ordinal $\alpha$ such that $\Gamma^{\alpha}%
(E)=\Gamma^{\alpha+1}(E)$.
\end{proposition}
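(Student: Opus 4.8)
The plan is to use that $X^2$ is compact metrizable, hence second countable, together with the observation that the transfinite sequence $\bigl(\Gamma^\alpha(E)\bigr)_\alpha$ is a non-decreasing chain of closed subsets of $X^2$; such a chain can strictly increase only countably often, so it must stabilize at a countable stage.

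First I would verify the structural facts about $\Gamma$. It is monotone on symmetric subsets of $X^2$: if $E\subseteq F$ then $E^+\subseteq F^+$, hence $\Gamma(E)=\overline{E^+\cup\Delta_X}\subseteq\overline{F^+\cup\Delta_X}=\Gamma(F)$. Also $F\subseteq\Gamma(F)$ always, since $F\subseteq F^+$ (take the trivial chain $u_1=u,\ u_2=v$) and $F^+\cup\Delta_X$ lies in its own closure. And $\Gamma$ sends symmetric sets to symmetric sets, because reversing a chain shows $F^+$ is symmetric when $F$ is, $\Delta_X$ is symmetric, and the coordinate swap is a self-homeomorphism of $X^2$ so the closure of a symmetric set is symmetric. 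By transfinite induction each $\Gamma^\alpha(E)$ is then a symmetric closed subset of $X^2$, and combining $F\subseteq\Gamma(F)$ with monotonicity, together with passing to closures of unions at limit stages, gives $\Gamma^\beta(E)\subseteq\Gamma^\alpha(E)$ whenever $\beta\le\alpha$.

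Then I would run a pigeonhole argument against a fixed countable basis $\mathcal{B}$ of $X^2$. Call an ordinal $\alpha$ \emph{active} if $\Gamma^\alpha(E)\subsetneq\Gamma^{\alpha+1}(E)$; for each active $\alpha$ choose $p_\alpha\in\Gamma^{\alpha+1}(E)\setminus\Gamma^\alpha(E)$ and, using that $\Gamma^\alpha(E)$ is closed, a basic set $B_\alpha\in\mathcal{B}$ with $p_\alpha\in B_\alpha$ and $B_\alpha\cap\Gamma^\alpha(E)=\emptyset$. If $\alpha<\beta$ are both active, then $\Gamma^{\alpha+1}(E)\subseteq\Gamma^\beta(E)$, so $B_\alpha$ meets $\Gamma^\beta(E)$ (at $p_\alpha$) whereas $B_\beta$ does not; hence $B_\alpha\ne B_\beta$. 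Thus $\alpha\mapsto B_\alpha$ injects the active ordinals into $\mathcal{B}$, so only countably many ordinals are active. Since $\omega_1$ is uncountable, some countable ordinal $\alpha$ fails to be active, i.e.\ $\Gamma^\alpha(E)=\Gamma^{\alpha+1}(E)$.

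I do not anticipate a genuine obstacle here; the only points requiring a moment's care are the limit-stage bookkeeping that keeps the sequence non-decreasing, and remembering that it is $X^2$ (not merely $X$) that must be second countable — both of which are immediate from compact metrizability of $X$.
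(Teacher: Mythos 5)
Your proposal is correct and follows the same route the paper takes: the paper simply invokes the fact that a strictly increasing chain of closed sets in a second-countable space must be countable, applied to the non-decreasing chain of closed sets $\Gamma^{\alpha}(E)$ in $X^2$, and your basis-pigeonhole argument is exactly the standard proof of that fact together with the needed monotonicity bookkeeping. No gaps.
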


 The smallest such ordinal is called the $\Gamma$-\textbf{rank of} $E$. We define $\Gamma^{\infty} (E)=\Gamma^{\alpha} (E)$, where $\alpha$ is the rank of $E$.
 
 The \textbf{Cantor set} $C\subseteq [0,1]$ is the standard Cantor ternary set. A \textbf{Cantor space} is a non-empty compact metrizable space that is totally disconnected (i.e., the only connected subsets are singletons) and has no isolated points. Every Cantor space is homeomorphic to the Cantor set \cite[Corollary 30.4]{willard}.
 
\subsection{Topological dynamics and local entropy theory}
We say that $(X,T)$ is a \textbf{topological dynamical system (TDS)} if $X$ is a compact metrizable space and $T:X\rightarrow X$ is a continuous function. A TDS is (topologically) \textbf{mixing} if for every pair of nonempty open sets $U,V$ there exists $N>0$ such that $T^{-n}(U)\cap V\neq \emptyset$ for every $n\geq N$. We say $(X_2,T_2)$ is a \textbf{factor} of $(X_1,T_1)$ if there exists a surjective continuous function $\phi : X_1\rightarrow X_2$ (called a \textbf{factor map}) such that $\varphi\circ T_{1}=T_{2}\circ\varphi$. It is well-known that all factors of mixing TDSs  and all countable products of mixing TDSs are mixing. 

Let $(X,T)$ be a TDS and $\mathcal{U},\mathcal{V}$ open covers of $X$. We denote the smallest cardinality of a subcover of $\mathcal{U}$ with $N(\mathcal{U})$, and
\[
\mathcal{U}\vee\mathcal{V}=\{U\cap V:U\in\mathcal{U}\text{ and }V\in\mathcal{V}\}.
\]
We define the \textbf{entropy of
$(X,T)$ with respect to $\mathcal{U}$} as
\[
h_{\text{top}}(X,T,\mathcal{U})=\lim_{n\rightarrow\infty}\frac{1}{n }\log N(\vee^n_{m=1}T^{-m}(\mathcal{U})).
\]
The \textbf{(topological) entropy} of $(X,T)$ is defined as \[
h_{\text{top}}(X,T)=\sup_{\mathcal{U}}h_{\text{top}}(X,T,\mathcal{U}).
\]
 
In order for a TDS to have positive entropy it is only necessary for a small portion of the system to be chaotic. In a broad sense, local entropy theory tackles the following questions: where can we localize the entropy of a given system? how can we define a system that has entropy everywhere? 

For measure-preserving systems, the idea of entropy ``everywhere" goes back to Kolmogorov. A measure-preserving system is a $K$-system if it satisfies Kolmogorov's zero-one law. The Rohlin-Sinai theorem, states that a measure-preserving system is a $K$-system if and only if every non-trivial (measurable) factor has positive (measurable) entropy if and only if the system has positive (measurable) entropy with respect to any non-trivial partition \cite[Theorem 18.9]{glasner2003ergodic} (note that measurable entropy is defined using partitions and not open covers). Dynamicists often study analogous properties in the topological and measurable contexts (see the surveys \cite{glasner2006interplay,huangsurvey}). 
In \cite{Blanchard1992}, two possible topological analogues of the $K$-systems were introduced. 

\begin{definition}
A TDS has \textbf{completely positive entropy (CPE)} if every non-trivial factor has positive entropy. 

A TDS has \textbf{uniform positive entropy (UPE)} if for every open cover $\mathcal{U}$ consisting of non-dense sets, we have that $h_{\text{top}}(X,T,\mathcal{U})>0$. 
\end{definition}
It turns out that these two notions are not equivalent. Every system that has UPE has CPE but the converse does not hold. A TDS with UPE is not necessarily mixing, but it is always weak-mixing \cite{Blanchard1992}. On the other hand there exist TDSs with CPE that are not transitive. 

The question: ``where can we localize the entropy?" has to do with how points relate to each other through the dynamics. Hence, this question is not answered in $X$ but in $X^2$. Informally, a pair is an entropy pair if every cover that separates the points has positive entropy. In this paper we will not give the original definition of Blanchard. Instead we will use the equivalent definition of IE-pairs (independence entropy pairs). The concept of independence (or interpolation) on dynamical systems appeared in the work Glasner and Weiss \cite{glasner1995quasi} (with ideas based on Rosenthal \cite{rosenthal1974characterization}), and Huang and Ye \cite{huang2004topological,huang2006local}. We will follow the definition of independence entropy pairs of Kerr and Li \cite{KerrLiMA}. 
 A set $I \subseteq \N$ has \textbf{positive density} if
$\liminf_n \frac{|I \cap [1,n]|}{n>0} >0$. Given a TDS $(X,T)$ and $\{U,V\}\subseteq X$, we say $I \subseteq \N$ is an \textbf{independence set for $\{U,V\}$} if for all finite $J \subseteq I$,
and for all $(Y_j) \in \prod _{j\in J}\{U,V\}$, we have that
\[
\cap_{j\in J}T^{-j}(Y_j)\neq \emptyset.
\]

\begin{definition}
Let $(X,T)$ be a TDS. We say that  $(x_1, x_2) \in X\times X$ is an \textbf{independence entropy pair (IE-pair) of $(X,T)$} if for every pair of open sets $A_1,A_2$, with $x_1\in A_1$ and $x_2\in A_2$, there exists an independence set for $\{A_1,
A_2\}$ with positive density. The set of IE-pairs of $(X,T)$ will be denoted by $E(X,T)$.
\end{definition}

We have that $E(X,T)\subseteq X^2$ is a closed subset. For the proof of the following results see \cite[Theorem 12.19]{kerr2016ergodic} and \cite{huang2007relative}. 
\begin{theorem}
\label{thm:basic}
Let $(X_1,T_1)$ and $(X_{2},T_{2})$ be TDSs. 

\begin{enumerate}
	
	\item $(X_1,T_1)$, has positive entropy if and only if there exists $x\neq y\in X_1$ with $(x,y)\in E(X_1,T_1)$.
	
	\item $E(X_1\times X_{2},T_1\times T_{2})=E(X_1,T_1)\times E(X_{2},T_{2})$. 
	
	\item Let $\phi:X_1\rightarrow X_2$ be a factor map. Then $E(X_2,T_2)=(\phi\times \phi)(E(X_1,T_1))$.
	
	\item $(X_1,T_1)$ and $(X_{2},T_{2})$ have CPE if and only if  $(X_1\times X_{2},T_1\times
	T_{2})$ has CPE.
	
	
\end{enumerate}	
\end{theorem}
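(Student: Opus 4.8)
The plan is to derive all four items from the combinatorial independence machinery of Kerr--Li \cite{KerrLiMA,kerr2016ergodic}, organizing them by logical dependence. Item (1) is the genuine combinatorial core. Item (3) has one elementary inclusion and one --- lifting IE-pairs through factor maps --- that is a compactness argument. Item (2) then follows: one inclusion formally from (3), the other from a multiplicativity property of independence. Item (4) is assembled from (1), (2), (3) together with elementary manipulations of factor maps. Throughout I use that $E(X,T)$ is a closed, $(T\times T)$-invariant subset of $X^2$, and that for a non-trivial system CPE implies positive entropy (the identity is a non-trivial factor).

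For (1), the implication ``a non-diagonal IE-pair exists $\Rightarrow$ positive entropy'' is elementary. Given $x\neq y$ with $(x,y)\in E(X_1,T_1)$, choose open sets $A_1\ni x$ and $A_2\ni y$ with disjoint closures and an independence set $I$ for $\{A_1,A_2\}$ of lower density $c>0$. For each $n$ and each $\omega\colon I\cap[0,n]\to\{1,2\}$ pick a point of $\bigcap_{i\in I\cap[0,n]}T_1^{-i}(A_{\omega(i)})$; no single member of $\bigvee_{m=0}^{n}T_1^{-m}\mathcal W$, for the open cover $\mathcal W=\{X_1\setminus\overline{A_1},\,X_1\setminus\overline{A_2}\}$, contains two of these $2^{|I\cap[0,n]|}$ points, so $N\big(\bigvee_{m=0}^{n}T_1^{-m}\mathcal W\big)\geq 2^{|I\cap[0,n]|}$ and hence $h_{\text{top}}(X_1,T_1,\mathcal W)\geq c\log 2>0$. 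The converse is the hard direction: from a finite open cover $\mathcal U$ with $h_{\text{top}}(X_1,T_1,\mathcal U)>0$ the number of $\mathcal U$-names of length $n$ is eventually at least $e^{\delta n}$ for some $\delta>0$, and a Sauer--Shelah / Karpovsky--Milman density estimate produces, along a subsequence of $n$'s, a fixed pair $\{U_a,U_b\}$ of distinct cover elements and sets $J_n\subseteq[0,n)$ with $|J_n|\geq c'n$ on which $\{U_a,U_b\}$ is independent; passing to the resulting positive independence density, then to an infinite positive-density independence set, and finally shrinking neighborhoods by a compactness argument yields a non-diagonal IE-pair of $(X_1,T_1)$.

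For (3), the inclusion $(\phi\times\phi)(E(X_1,T_1))\subseteq E(X_2,T_2)$ is immediate: if $(x,y)\in E(X_1,T_1)$ and $B_1\ni\phi(x),\,B_2\ni\phi(y)$ are open in $X_2$, any independence set for $\{\phi^{-1}(B_1),\phi^{-1}(B_2)\}$ in $X_1$ is, after pushing the nonempty intersection points through $\phi$ and using $\phi T_1=T_2\phi$, an independence set for $\{B_1,B_2\}$ of the same density. The reverse inclusion is the lifting statement: given $(u,v)\in E(X_2,T_2)$, carry the independence data for shrinking neighborhoods of $u$ and $v$ up along $\phi$, keeping the chosen intersection points inside $\phi^{-1}(u)\times\phi^{-1}(v)$, and extract by compactness of these fibers a pair there that is an IE-pair of $(X_1,T_1)$; preserving positivity of the densities in the limit is cleanest in the independence-density formulation. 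Item (2) follows: the inclusion $\subseteq$ is (3) applied to the coordinate projections $\pi_1,\pi_2$, which are factor maps, since $(\pi_i\times\pi_i)\big(E(X_1\times X_2,T_1\times T_2)\big)=E(X_i,T_i)$ forces both coordinates of a product IE-pair into the respective $E(X_i,T_i)$; and the inclusion $\supseteq$, that IE-pairs multiply, is the one remaining point needing combinatorics, most transparently via the $\mu$-IE-pair characterization, using that a $\mu_1$-IE-pair of $X_1$ and a $\mu_2$-IE-pair of $X_2$ together form a $(\mu_1\times\mu_2)$-IE-pair of $X_1\times X_2$.

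For (4): ``the product has CPE $\Rightarrow$ each factor has CPE'' is formal, since for a non-trivial factor $\psi\colon X_1\to Y$ the composite $\psi\circ\pi_1$ is a non-trivial factor of the product, hence has positive entropy, hence so does $Y$ (and symmetrically for $X_2$). Conversely, assume $X_1$ and $X_2$ have CPE; if either is trivial the product is conjugate to the other, so assume both are non-trivial, whence by (1) each $E(X_i,T_i)$ meets the complement of the diagonal. Let $\psi\colon X_1\times X_2\to Y$ be any non-trivial factor; by (1) it suffices to find a non-diagonal pair in $E(Y,T_Y)$, and by (2) and (3), $E(Y,T_Y)=(\psi\times\psi)\big(E(X_1,T_1)\times E(X_2,T_2)\big)$. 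Consider $R=\{(u,v)\in X_1^2 : \psi(T_1^k u,s)=\psi(T_1^k v,s)\text{ for all }s\in X_2,\ k\geq 0\}$, a closed, $(T_1\times T_1)$-invariant equivalence relation. If $R=X_1^2$, then $\psi$ does not depend on its first coordinate, so it factors through a non-trivial factor of $X_2$, which has positive entropy by CPE of $X_2$; hence $Y$ does. If $R\neq X_1^2$, then $X_1/R$ is a non-trivial factor of $X_1$, so by CPE of $X_1$ together with (1) and (3) there is a non-diagonal $(u,v)\in E(X_1,T_1)$ with $(u,v)\notin R$; after replacing $(u,v)$ by $(T_1^k u,T_1^k v)$ for a suitable $k$ we obtain $\psi(u,s)\neq\psi(v,s)$ for some $s$, and then $\big((u,s),(v,s)\big)\in E(X_1,T_1)\times E(X_2,T_2)=E(X_1\times X_2,T_1\times T_2)$ has distinct $\psi$-images, giving the required non-diagonal pair in $E(Y,T_Y)$. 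The main obstacle in the whole argument is the forward direction of (1): converting exponential growth of $\mathcal U$-names into a positive-density independent set for a single pair of sets requires the Sauer--Shelah/Karpovsky--Milman density lemma plus a fair amount of compactness bookkeeping to land the IE-pair exactly off the diagonal; the multiplicativity used for the inclusion $\supseteq$ of (2) is a smaller but genuine technical point, while the rest --- the easy inclusion of (3), the inclusion $\subseteq$ of (2), and all of (4) --- is formal or a routine compactness argument.
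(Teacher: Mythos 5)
The paper does not actually prove this theorem: it quotes it from the literature, pointing to \cite[Theorem 12.19]{kerr2016ergodic} and \cite{huang2007relative}, so there is no internal proof to compare against. Your sketch of (1)--(3) is essentially a reconstruction of the Kerr--Li arguments that those references contain, and your derivation of (4) from (1)--(3) via the relation $R$ is a genuinely different (and rather clean) route from the usual one, which goes through the characterization of CPE by $\Gamma^{\infty}(E(X,T))=X^2$ (Theorem~\ref{thm:cpecharac}) together with the product formula. Two points, however, are understated or missing.

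First, in (3) the lifting of IE-pairs through $\phi$ is not ``a routine compactness argument.'' Pulling back neighborhoods of $(u,v)$ only gives positive independence density for the pair of large sets $\phi^{-1}(B_1),\phi^{-1}(B_2)$; to localize to a pair of points of $\phi^{-1}(u)\times\phi^{-1}(v)$ one must cover each fiber by finitely many small pieces and invoke, once more, the Karpovsky--Milman-type subdivision lemma (if $\{A_1\cup A_1',A_2\}$ has positive independence density then so does $\{A_1,A_2\}$ or $\{A_1',A_2\}$) before the shrinking-neighborhood argument can close; a pure compactness argument cannot do this, because failure of independence is not an open condition on pairs of points. So the combinatorial lemma you reserve for the hard direction of (1) is needed again here. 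Second, in the last step of (4) you place $\bigl((u,s),(v,s)\bigr)$ in $E(X_1,T_1)\times E(X_2,T_2)$, which requires $(s,s)\in E(X_2,T_2)$ for the particular $s$ produced by $R$; this is not automatic for an arbitrary point of an arbitrary system, but it does hold here because a non-trivial CPE system satisfies $\Delta_X\subseteq E(X,T)$ --- a fact the paper records just after Proposition~\ref{prop:characUPE}, and which follows since $(x,y)\in E$ implies $(x,x)\in E$, $\Gamma^{\alpha}(E)\subseteq(\pi_1(E)\times\pi_1(E))\cup\Delta_X$ for all $\alpha$ with $\pi_1(E)$ closed, and hence $\Gamma^{\infty}(E)=X^2$ forces $\pi_1(E)=X$. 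With these two points supplied, the argument is correct.
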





\bigskip

Now we will study the $\Gamma$-rank defined at the beginning of this section in the particular case when $E=E(X,T)$ is the set of independence
entropy pairs.

\begin{definition}
Let $(X,T)$ be a TDS. The
\textbf{entropy rank} of $(X,T)$ is the $\Gamma$-rank of $E(X,T)$.
\end{definition}

Entropy pairs can be used to characterize CPE and UPE. 
An equivalent statement of the following results was proved in \cite{blanchard93} (also see \cite[Theorem 12.30]{kerr2016ergodic}).   
\begin{theorem}
\label{thm:cpecharac}
A TDS has CPE if and only if $\Gamma^{\alpha}(E(X,T))=X^{2}$ where
$\alpha$ is the entropy rank of $(X,T)$.
\end{theorem}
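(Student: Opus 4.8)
The plan is to prove both implications using Theorem~\ref{thm:basic}, in particular the facts that positive entropy is equivalent to the existence of a non-diagonal IE-pair and that IE-pairs are compatible with factor maps. The technical engine is an observation about how $\Gamma$ interacts with a factor map $\phi\colon (X,T)\to(Y,S)$: since $\phi\times\phi\colon X^2\to Y^2$ is a continuous surjection of compact spaces, it carries closures to closures, sends $\Delta_X$ onto $\Delta_Y$, and satisfies $(\phi\times\phi)(F^+)\subseteq((\phi\times\phi)(F))^+$ for every $F$; hence $(\phi\times\phi)(\Gamma(F))\subseteq\Gamma((\phi\times\phi)(F))$ for every symmetric $F$, and by transfinite induction (using monotonicity of $\Gamma$ and the definition of $\Gamma^\beta$ at limit ordinals) $(\phi\times\phi)(\Gamma^\beta(F))\subseteq\Gamma^\beta((\phi\times\phi)(F))$ for every ordinal $\beta$. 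Together with Theorem~\ref{thm:basic}(3), which identifies $(\phi\times\phi)(E(X,T))=E(Y,S)$, this yields $(\phi\times\phi)(\Gamma^\beta(E(X,T)))\subseteq\Gamma^\beta(E(Y,S))$. I will also use the elementary remark that if a symmetric $F\subseteq Y^2$ satisfies $F\subseteq\Delta_Y$, then $\Gamma^\beta(F)=\Delta_Y$ for all $\beta$, since $\Delta_Y$ is closed and $\Delta_Y^+=\Delta_Y$.

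For the implication ``$\Gamma^\infty(E(X,T))=X^2\Rightarrow\text{CPE}$'', let $\alpha$ be the entropy rank of $(X,T)$, so $\Gamma^\alpha(E(X,T))=X^2$, and let $\phi\colon(X,T)\to(Y,S)$ be any factor with $Y$ non-trivial. Applying the engine with $\beta=\alpha$ gives $Y^2=(\phi\times\phi)(X^2)\subseteq\Gamma^\alpha(E(Y,S))$, hence $\Gamma^\alpha(E(Y,S))=Y^2$. Since $|Y|\ge 2$ we have $Y^2\ne\Delta_Y$, so by the elementary remark $E(Y,S)\not\subseteq\Delta_Y$; thus $(Y,S)$ has a non-diagonal IE-pair and, by Theorem~\ref{thm:basic}(1), positive entropy. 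As $\phi$ was an arbitrary non-trivial factor, $(X,T)$ has CPE.

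For the converse, assume $(X,T)$ has CPE and set $R=\Gamma^\infty(E(X,T))=\Gamma^\alpha(E(X,T))$. The crucial step is to check that $R$ is a closed, $T\times T$-invariant equivalence relation. Closedness is immediate. The fixed-point identity $R=\Gamma(R)=\overline{R^+\cup\Delta_X}$ gives $\Delta_X\subseteq R$ and $R^+\subseteq R$; symmetry is inherited from $E(X,T)$ through $\Gamma$; and transitivity holds because $(u,v),(v,w)\in R$ forces $(u,w)\in R^+\subseteq R$. For invariance, $E(X,T)$ is $T\times T$-invariant (a standard property of IE-pairs), and since $T\times T$ is continuous and commutes appropriately with $(\cdot)^+$, with $\Delta_X$, and with closure, this invariance propagates through each $\Gamma^\beta$, so $(T\times T)(R)\subseteq R$. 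Consequently the quotient $X/R$ is a compact metrizable space, $T$ descends to a continuous map $T_R$, and the quotient map $\pi\colon X\to X/R$ is a factor map with $(\pi\times\pi)(R)=\Delta_{X/R}$. Since $E(X,T)\subseteq R$, Theorem~\ref{thm:basic}(3) gives $E(X/R,T_R)=(\pi\times\pi)(E(X,T))\subseteq(\pi\times\pi)(R)=\Delta_{X/R}$, so $(X/R,T_R)$ has no non-diagonal IE-pair and hence, by Theorem~\ref{thm:basic}(1), zero entropy. By CPE this factor must be trivial, i.e. $X/R$ is a single point, which means $R=X^2$. Therefore $\Gamma^\infty(E(X,T))=X^2$.

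The main obstacle is the verification in the last paragraph that $R=\Gamma^\infty(E(X,T))$ is genuinely a closed, $T\times T$-invariant equivalence relation, so that the quotient system $(X/R,T_R)$ is well-defined and is a factor of $(X,T)$; everything else reduces to Theorem~\ref{thm:basic} together with the bookkeeping of how $\Gamma$ and its transfinite iterates behave under continuous maps. One should also explicitly record the (standard, but not stated in the excerpt) fact that the set of IE-pairs is $T\times T$-invariant, which is what drives the invariance of $R$.
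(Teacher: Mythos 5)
Your proof is correct. The paper does not actually prove Theorem~\ref{thm:cpecharac} itself (it cites Blanchard and Kerr--Li), and your argument is precisely the standard one from those sources: the forward direction by pushing $\Gamma^{\beta}$ through a factor map via $(\phi\times\phi)(\Gamma^{\beta}(F))\subseteq\Gamma^{\beta}((\phi\times\phi)(F))$, and the converse by verifying that $\Gamma^{\infty}(E(X,T))$ is a closed $T\times T$-invariant equivalence relation whose quotient is the maximal zero-entropy (topological Pinsker) factor. You are right to flag the one ingredient not recorded in the paper's Theorem~\ref{thm:basic}, namely the $T\times T$-invariance of $E(X,T)$; this is indeed standard (see \cite{KerrLiMA}), so no gap remains.
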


\begin{proposition}

\label{prop:characUPE}
A TDS has UPE if and only if $E(X,T)=X^2$. 
\end{proposition}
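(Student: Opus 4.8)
The plan is to prove both implications, leaning on two facts already available: $E(X,T)$ is a closed subset of $X^{2}$, and the standard dictionary (built into the theory of IE-pairs) between exponential growth of $N\!\bigl(\bigvee_{m=1}^{n}T^{-m}\mathcal U\bigr)$ and combinatorial independence of the tuple of complements of $\mathcal U$. The case $\lvert X\rvert=1$ is trivial on both sides, since the only open cover is $\{X\}$, which consists of a dense set (so UPE holds vacuously), and $X^{2}=\Delta_{X}$ is a single IE-pair; so assume $\lvert X\rvert\ge 2$. The two implications are of different character: UPE $\Rightarrow E(X,T)=X^{2}$ is elementary separation of points together with a Sauer--Shelah count, while the converse reduces to one non-trivial fact from local entropy theory that I would quote rather than reprove.

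For UPE $\Rightarrow E(X,T)=X^{2}$, first observe that $X$ has no isolated points: if $p$ were isolated, then $\mathcal U=\{\{p\},X\setminus\{p\}\}$ is an open cover by non-dense sets, yet a non-empty element of $\bigvee_{m=1}^{n}T^{-m}\mathcal U$ is determined by the set $\{1\le m\le n:T^{m}x=p\}$, which is in turn determined by its smallest element (and the period of $p$, if any); so there are at most $n+1$ non-empty elements and $h_{\text{top}}(X,T,\mathcal U)=0$, contradicting UPE. Hence $X$ is perfect, $X^{2}\setminus\Delta_{X}$ is dense in $X^{2}$, and since $E(X,T)$ is closed it suffices to show $E(X,T)\supseteq X^{2}\setminus\Delta_{X}$. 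Fix $x_{1}\ne x_{2}$; given open $A_{i}\ni x_{i}$, use that $X$ is a compact metric space to choose open $B_{i}$ with $x_{i}\in B_{i}\subseteq\overline{B_{i}}\subseteq A_{i}$ and $\overline{B_{1}}\cap\overline{B_{2}}=\emptyset$. Then $\mathcal U=\{X\setminus\overline{B_{1}},\,X\setminus\overline{B_{2}}\}$ is an open cover by non-dense sets, so $h_{\text{top}}(X,T,\mathcal U)>0$ by UPE; a Sauer--Shelah argument turns this into a set $I\subseteq\N$ of positive upper density with $\bigcap_{m\in F}T^{-m}\overline{B_{\tau(m)}}\ne\emptyset$ for every finite $F\subseteq I$ and every $\tau\colon F\to\{1,2\}$. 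Since $\overline{B_{i}}\subseteq A_{i}$, the same $I$ is an independence set for $\{A_{1},A_{2}\}$, so $(x_{1},x_{2})\in E(X,T)$.

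For the converse, assume $E(X,T)=X^{2}$; let $\mathcal U=\{U_{1},\dots,U_{k}\}$ be an open cover by non-dense sets and put $W_{i}=X\setminus\overline{U_{i}}$, a non-empty open set. Here I would invoke the fact --- the combinatorial core of Blanchard's characterisation, from \cite{blanchard93} (see also \cite[Theorem~12.30]{kerr2016ergodic}), which can be obtained via the Karpovsky--Milman generalisation of Sauer--Shelah together with the product formula and invariance of the set of IE-tuples --- that $E(X,T)=X^{2}$ forces every $k$-tuple of points of $X$ to be an IE-tuple (the evident $k$-fold analogue of an IE-pair); equivalently, that UPE may be tested on two-element covers only. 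Applying this to a point of $W_{1}\times\cdots\times W_{k}$ yields a set $I\subseteq\N$ of positive upper density that is an independence set for $(W_{1},\dots,W_{k})$. A counting bound then gives $h_{\text{top}}(X,T,\mathcal U)>0$: for arbitrarily large $n$, writing $\ell=\lvert I\cap[1,n]\rvert\ge\delta n$, each of the $k^{\ell}$ labellings $\tau\colon I\cap[1,n]\to\{1,\dots,k\}$ yields a point whose $T$-itinerary avoids $U_{\tau(m)}$ at the times $m\in I\cap[1,n]$, while a fixed member of $\bigvee_{m=1}^{n}T^{-m}\mathcal U$ can contain such a point for at most $(k-1)^{\ell}$ labellings (at each such time its symbol must differ from $\tau(m)$); hence $N\!\bigl(\bigvee_{m=1}^{n}T^{-m}\mathcal U\bigr)\ge\bigl(\tfrac{k}{k-1}\bigr)^{\ell}$ and $h_{\text{top}}(X,T,\mathcal U)\ge\delta\log\tfrac{k}{k-1}>0$. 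Thus $(X,T)$ has UPE.

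The only genuinely non-routine ingredient is the step flagged in the converse: upgrading ``every pair is an IE-pair'' to ``every finite tuple is an IE-tuple'' (equivalently, reducing UPE from arbitrary covers by non-dense sets to two-element ones). The rest is point-set topology, the $k=2$ Sauer--Shelah count, and the elementary counting inequality above, so I would present the argument as a short reduction to that known fact.
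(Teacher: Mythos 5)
The paper does not actually prove this proposition; it is imported from the literature (the remark preceding Theorem~\ref{thm:cpecharac} points to \cite{blanchard93} and \cite[Theorem 12.30]{kerr2016ergodic}), so you are reconstructing an omitted argument. Your forward direction is sound: the exclusion of isolated points, the reduction to $X^2\setminus\Delta_X$ via closedness of $E(X,T)$, and the passage from positive entropy of the two-set cover $\{X\setminus\overline{B_1},X\setminus\overline{B_2}\}$ to a positive-density independence set for $(\overline{B_1},\overline{B_2})$ are all correct, the last step being the Kerr--Li combinatorial lemma, which is fair to quote given that the paper quotes the entire equivalence. The $k=2$ instance of your counting argument in the converse is also correct.

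The genuine gap is exactly the step you flag as the ``only non-routine ingredient'' of the converse: the claim that $E(X,T)=X^2$ forces every $k$-tuple to be an IE-tuple, equivalently that positivity of cover entropy for arbitrary covers by non-dense open sets can be tested on two-element covers. This is not a quotable fact --- it is false. Huang and Ye constructed, for every $n\ge 2$, minimal systems having UPE of order $n$ but not of order $n+1$; in particular there are systems in which every pair is an IE-pair (so $E(X,T)=X^2$) yet some triple is not an IE-triple and some three-element cover by non-dense open sets has zero entropy. No combination of the Karpovsky--Milman lemma, the product formula, and invariance will produce the upgrade you want, because the upgrade is not true. The resolution is that Proposition~\ref{prop:characUPE} is Blanchard's characterisation of UPE in its original sense, where the covers consist of \emph{two} non-dense open sets; read this way, your converse needs only $k=2$, where your own argument (pick $(w_1,w_2)\in W_1\times W_2$, extract a positive-density independence set for $(W_1,W_2)$, and run the $2^{\ell}$-versus-$1$ count) is already complete and the problematic reduction is never invoked. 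Under the paper's literal wording (arbitrary covers by non-dense sets, i.e.\ UPE of all orders) the statement itself would fail by the same Huang--Ye examples, so the two-set reading is forced. A minor further point: the paper's definition of IE-pair requires independence sets of positive \emph{lower} density, whereas you produce positive upper density; the standard Kerr--Li argument does deliver lower density, so this is cosmetic, but it should be stated consistently.
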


That is, a TDS has UPE if and only if it has CPE and entropy rank 0. In fact, if $(X,T)$ has CPE then $\Delta_X\subseteq E(X,T)$ so in this case to prove entropy rank 0, we only need to show that $E(X,T)=E(X,T)^+$.

Given a compact metrizable space $X$, we define $\text{TDS(X)}$ as the set of all continuous functions from $X$ into $X$ endowed with the  {uniform topology generated by the sup metric}. 

 Note that $\text{TDS(X)}$ is a Polish space. We also define the following subspaces
\[
\textup{UPE}(X)=\{T\in \textup{TDS(X)}: (X,T)\textup{ has UPE}\}
\]
\[
\textup{CPE}(X)=\{T\in \textup{TDS(X)}: (X,T)\textup{ has CPE}\},\textup{ and}
\]
\[
\textup{Mix}(X)=\{T\in \textup{TDS(X)}: (X,T)\textup{ is mixing}\}.
\]

\subsection{Descriptive set theory}
Descriptive set theory and dynamical systems have a longstanding relationship. A few years after Halmos proved that the collection of weak-mixing measure-preserving systems is a dense $G_{\delta}$ set \cite{halmos1944general,halmos2017lectures}, Rohlin proved that the collection of mixing measure-preserving systems is meager. While not constructive, this provided the first proof that there exist weak-mixing but non-mixing measure-preserving systems. Descriptive set theory can also be used to prove \emph{anti-classification} results. This is done by proving that a certain property or equivalence relation cannot be characterized using countable resources. 

In this section we recall the basics of classical descriptive set theory. We refer the reader to \cite{Kechris} for more details. 

Let $\mathcal{X}$ be a \textbf{Polish} space (separable completely metrizable topological space). 
A subset of $\mathcal{X}$ is \textbf{Borel} if it belongs to the smallest $\sigma$-algebra generated by the open sets. Borel sets naturally fall into the Borel hierarchy:
\[ \left .
\begin{matrix} 
    \Sigma^0_1 & \Sigma^0_2  \ldots & \Sigma^0_{\alpha}  \ldots \\
    \Pi^0_1 & \Pi^0_2  \ldots & \Pi^0_{\alpha}  \ldots 
\end{matrix}
\ \ \ \ \ \right \} \alpha < \omega _1,
\]
where $\Sigma^0_1$ is the collection of all open sets, $\Pi^0_{\alpha}$ is the collection of sets whose complement is in $\Sigma^0_{\alpha}$ and $\Sigma^0_{\alpha}$ is the collection of  countable unions of sets  from $\Pi^0_{\beta}$, $\beta < \alpha$. Moreover, $\Sigma^0_{\beta}\cup \Pi^0_{\beta} \subseteq \Sigma^0_{\alpha}\cap \Pi^0_{\alpha}$ whenever $\beta < \alpha$, i.e., the hierarchy is increasing. If $\mathcal{X}$ is uncountable, then the hierarchy is strictly increasing, i.e., there are sets which belong to the $\alpha$ levels but not the previous ones.

Beyond the Borel hierarchy sits the projective hierarchy. We are particularly interested in the first level of the projective hierarchy, i.e., analytic and coanalytic sets. 
We say that a subset of a Polish space is \textbf{analytic} if it is the continuous image of a Borel subset of a Polish space and \textbf{coanalytic} (or, equivalently, $\Pi_1^1$) if it is the complement of an analytic set.
All Borel subsets of a Polish space are both analytic and coanalytic. Moreover, if a set is both analytic and coanalytic, then it must be Borel. However, in every uncountable Polish space there are analytic, and hence coanalytic, sets which are not Borel. Loosely speaking, if a set is analytic or coanalytic but not Borel, it can not be described with countably many quantifiers over a countable set.

A standard method to prove a coanalytic set is not Borel is to reduce it to a known combinatorial set which is not Borel. More specifically, if $\mathcal{B}$ is a known non-Borel subset of some Polish $\mathcal{Y}$, $\mathcal{A} \subseteq  \mathcal{X}$ and $f:\mathcal{Y}\rightarrow \mathcal{X}$ is a Borel function such that $f^{-1}( \mathcal{A}) = \mathcal{B}$, then $\mathcal{A}$ is not Borel. In this case, we say that $\mathcal{B}$ is \textbf{Borel reducible} to $\mathcal{A}$. This inspires the following definition. 
\begin{definition}
A coanalytic set $\mathcal{A}$ subset of a Polish space $\mathcal{X}$ is \textbf{complete coanalytic} (or $\Pi_1^1$-complete) if for every coanalytic set $\mathcal{B}$ of a Polish space $\mathcal{Y}$ there exists a Borel function $f:\mathcal{Y}\rightarrow \mathcal{X}$ such that $f^{-1}( \mathcal{A}) = \mathcal{B}$. 
\end{definition}
In some sense complete coanalytic set are as complicated as coanalytic sets can be. The following proposition simply follows from the definition and the fact that the composition of Borel functions is Borel. 

\begin{proposition}
\label{prop:coanalyticbasic} Let $\mathcal{A}$ be a coanalytic subset of a Polish space $\mathcal{X}$ and $\mathcal{B}$ be a complete coanalytic subset of a Polish space $\mathcal{Y}$. 
If there exists a Borel function $f:\mathcal{Y}\rightarrow \mathcal{X}$ such that  $f^{-1}( \mathcal{A}) = \mathcal{B}$, i.e., $\mathcal{B}$ is Borel reducible to $\mathcal{A}$, then $\mathcal{A}$ is also complete coanalytic. 
\end{proposition}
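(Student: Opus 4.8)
The plan is to unwind the definition of coanalytic-completeness for $\mathcal{B}$ and then compose the resulting reduction with the given map $f$. Fix an arbitrary Polish space $\mathcal{Z}$ and an arbitrary coanalytic set $\mathcal{C}\subseteq\mathcal{Z}$; the goal is to produce a Borel function $h:\mathcal{Z}\rightarrow\mathcal{X}$ with $h^{-1}(\mathcal{A})=\mathcal{C}$, which by definition will establish that $\mathcal{A}$ is coanalytic complete (recall $\mathcal{A}$ is already assumed coanalytic).

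Since $\mathcal{B}\subseteq\mathcal{Y}$ is coanalytic complete and $\mathcal{C}$ is coanalytic, the definition of completeness hands us a Borel function $g:\mathcal{Z}\rightarrow\mathcal{Y}$ with $g^{-1}(\mathcal{B})=\mathcal{C}$. I would then set $h=f\circ g$ and verify two things. First, that $h$ is Borel: for every open (hence every Borel) $U\subseteq\mathcal{X}$, the set $f^{-1}(U)$ is Borel because $f$ is Borel, and then $h^{-1}(U)=g^{-1}\bigl(f^{-1}(U)\bigr)$ is Borel because the $g$-preimage of a Borel set is Borel — this is exactly the standard fact that the composition of Borel maps between Polish spaces is Borel. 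Second, the reduction identity: using the hypothesis $f^{-1}(\mathcal{A})=\mathcal{B}$,
\[
h^{-1}(\mathcal{A})=g^{-1}\bigl(f^{-1}(\mathcal{A})\bigr)=g^{-1}(\mathcal{B})=\mathcal{C},
\]
which is precisely what is required.

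There is essentially no obstacle here: the only point needing (routine) justification is that $f\circ g$ is Borel, and the rest is a one-line diagram chase on preimages. I would, however, flag that the hypothesis that $\mathcal{A}$ is itself coanalytic cannot be dropped — Borel reducibility of a coanalytic-complete set into $\mathcal{A}$ alone only yields that $\mathcal{A}$ is coanalytic-hard, and coanalytic-completeness additionally demands membership in the class $\mathbf{\Pi}^1_1$.
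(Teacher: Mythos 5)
Your argument is correct and is exactly what the paper has in mind: the paper offers no written proof beyond remarking that the proposition ``follows from the definition and the fact that the composition of Borel functions is Borel,'' and your composition $h=f\circ g$ together with the preimage computation is precisely that argument spelled out. Your closing remark that coanalyticity of $\mathcal{A}$ is needed to upgrade hardness to completeness is also accurate and consistent with the paper's definition.
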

A classical combinatorial example of complete coanalytic set is, WF, the set of all well-founded trees on $\mathbb{N}$, \cite[p.243 ]{Kechris}. In practice, one normally Borel reduces WF to a given coanalytic set to show that the given coanalytic set is complete coanalytic. In our case, we will use a result of Hurewicz to obtain a combinatorial complete coanalytic set, i.e., the set of all countable compact subsets of a Cantor space is complete coanalytic \cite[p. 245]{Kechris}. In particular, in Theorem~\ref{intcomplete}, we will reduce this set of Hurewicz to prove that $\textup{CPE}([0,1]^d)$ is $\Pi^1_1$-complete. 

An alternate method for proving that a coanalytic set is not Borel is to use the so called ``rank method". Often coanalytic sets in natural settings admit a natural rank, i.e., an assignment which associates a countable ordinal to each element. If this assignment is a $\Pi^1_1$-rank and the rank is unbounded, then the set under question is not Borel. More precisely, we have the following definition. 

\begin{definition}\label{pi11}
Let ${\mathcal X}$ be a Polish space, ${\mathcal C} \subseteq {\mathcal X}$ and $\varphi : {\mathcal C} \rightarrow \omega_1$. We say that $\varphi$ is a  {\bf  $\Pi^1_1$-rank}  if ${\mathcal C}$ is $\Pi^1_1$ and  there are relations ${\mathcal P}, {\mathcal Q} \subseteq {\mathcal X}^2$, one of them $\Sigma^1_1$ and the other $\Pi^1_1$, such that for all $y \in {\mathcal C}$ we have that 
\begin{gather*}
    \{x \in {\mathcal C}: \varphi (x) \le \varphi(y)\} = \{x\in {\mathcal X}: (x,y) \in {\mathcal P} \} = \{x \in {\mathcal X}: (x,y) \in {\mathcal Q}\}.
\end{gather*}
Loosely speaking, $\varphi$ is a $\Pi^1_1$-rank if $\{x: \varphi (x) \le \varphi(y)\} $ is "uniformly Borel in $y$".
\end{definition}
\begin{theorem}\label{overspill}\cite[Section 35.E]{Kechris}
Let ${\mathcal C}$ be a $\Pi^1_1$ set and $\varphi$ be a $\Pi^1_1$-rank on ${\mathcal C} $. If ${\mathcal A}  \subseteq {\mathcal C}$ is $\Sigma^1_1$, then $\varphi$ is bounded on ${\mathcal A} $, i.e., there exists $\alpha < \omega_1$ such that $\varphi(x) < \alpha$ for all $x \in {\mathcal A} $. In particular,

\[ {\mathcal C}  \mbox{ is Borel} \iff \varphi \mbox{ is bounded on } C.
\]
\end{theorem}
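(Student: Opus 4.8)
The plan is to reduce the statement to the Kunen--Martin theorem (every well-founded analytic relation on a Polish space has rank $<\omega_1$), which I regard as the one essential input; the rest is bookkeeping with the two relations supplied by the $\Pi^1_1$-rank. After relabeling, assume $\mathcal{P}\in\Sigma^1_1$ and $\mathcal{Q}\in\Pi^1_1$, so that for every $y\in\mathcal{C}$ we have $\{x\in\mathcal{C}:\varphi(x)\le\varphi(y)\}=\{x:(x,y)\in\mathcal{P}\}=\{x:(x,y)\in\mathcal{Q}\}$.

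First I would fix an arbitrary $\Sigma^1_1$ set $\mathcal{A}\subseteq\mathcal{C}$ and analyze the strict order it carries. For $x,y\in\mathcal{A}\subseteq\mathcal{C}$ we have $\varphi(x)<\varphi(y)\iff\neg(\varphi(y)\le\varphi(x))\iff (y,x)\notin\mathcal{Q}$, the last equivalence being the rank property applied at the point $x\in\mathcal{C}$. Since $\mathcal{Q}$ is $\Pi^1_1$, its complement is $\Sigma^1_1$, and intersecting with $\mathcal{A}\times\mathcal{A}$, itself a $\Sigma^1_1$ subset of $\mathcal{X}^2$, shows that
\[
R:=\{(x,y): x\in\mathcal{A}\wedge y\in\mathcal{A}\wedge (y,x)\notin\mathcal{Q}\}
\]
is an analytic relation on $\mathcal{X}$. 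As the strict part of the prewellordering induced by $\varphi$, the relation $R$ is well-founded, since an infinite $R$-descending chain would yield an infinite strictly decreasing sequence of ordinals. Its rank (height) is exactly the order type of $\varphi(\mathcal{A})\subseteq\omega_1$.

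Next I would invoke the Kunen--Martin theorem \cite{Kechris}: the analytic well-founded relation $R$ has rank $<\omega_1$, so $\varphi(\mathcal{A})$ has countable order type. A subset of $\omega_1$ of countable order type is countable, hence bounded because $\omega_1$ has uncountable cofinality; therefore $\sup_{x\in\mathcal{A}}\varphi(x)<\omega_1$, which is the boundedness assertion. For the ``In particular'' equivalence, one direction is immediate: if $\mathcal{C}$ is Borel then it is $\Sigma^1_1$, so applying boundedness to $\mathcal{A}=\mathcal{C}$ bounds $\varphi$ on $\mathcal{C}$. For the converse, suppose $\varphi\le\xi$ on $\mathcal{C}$ with $\xi<\omega_1$. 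The attained values form a subset of the countable ordinal $\xi+1$, hence are countable, and for each attained $\eta$ I would pick a witness $y_\eta$ with $\varphi(y_\eta)=\eta$; the rank property then exhibits the initial segment $\{x\in\mathcal{C}:\varphi(x)\le\eta\}=\{x:(x,y_\eta)\in\mathcal{P}\}=\{x:(x,y_\eta)\in\mathcal{Q}\}$ as simultaneously $\Sigma^1_1$ and $\Pi^1_1$, hence Borel. Thus $\mathcal{C}$ is a countable union of Borel sets, so it is Borel.

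The one genuine obstacle is the Kunen--Martin step: recasting the abstract boundedness of $\varphi$ as a statement about the rank of an \emph{analytic} well-founded relation and then citing that such relations have countable rank is what drives the argument. It is exactly here that the hypothesis that $\mathcal{A}$ (and not merely $\mathcal{C}$) is analytic is used, namely to guarantee that $R$ is $\Sigma^1_1$; a softer argument expressing $\mathcal{C}$ as a $\Sigma^1_1$ projection would only settle the case where $\mathcal{C}$ is already non-Borel and would not cover boundedness on analytic subsets of a Borel $\mathcal{C}$.
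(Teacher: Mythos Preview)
The paper does not prove this statement; it is quoted from \cite[Section 35.E]{Kechris} without proof, so there is no in-paper argument to compare against. Your proof is correct and is essentially the standard one: you correctly extract from the $\Pi^1_1$-rank data an analytic well-founded strict-order relation on $\mathcal{A}$, invoke Kunen--Martin to bound its rank, and then handle the Borel equivalence by writing $\mathcal{C}$ as a countable union of sets that are simultaneously $\Sigma^1_1$ and $\Pi^1_1$. One small point worth making explicit: when you verify that $(y,x)\notin\mathcal{Q}$ is equivalent to $\varphi(x)<\varphi(y)$ for $x,y\in\mathcal{A}$, you are implicitly using that the rank identity $\{z:(z,x)\in\mathcal{Q}\}=\{z\in\mathcal{C}:\varphi(z)\le\varphi(x)\}$ also forces $(y,x)\in\mathcal{Q}\Rightarrow y\in\mathcal{C}$, so that no spurious pairs with $y\notin\mathcal{C}$ enter $R$; this holds because $y\in\mathcal{A}\subseteq\mathcal{C}$, exactly as you use it.
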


{A classical $\Pi^1_1$-rank  known as the Cantor-Bendixson rank will be useful for
us. Given a compact set $A$, we let $A'$ denote the \textbf{Cantor-Bendixson derivative} of $A$, i.e., the set of all limit-points of $A$.} For countable ordinal $\alpha$, $A^{\alpha}$, the $\alpha^{th}$ Cantor-Bendixson derivative of $A$, is defined by transfinite recursion as follows
\[
A^{\alpha +1}=(A^{\alpha})'\text{ and}
\]
\[
A^{\beta}=\cap_{\alpha<\beta} X^{\beta} \text{ for a limit ordinal $\beta$.}
\]


Of course, from separability we have that for each compact set $A$ there is a countable ordinal $\alpha$ such that $A^{\alpha} = A^{\alpha+1}$. The least such $\alpha$ is called the \textbf{Cantor-Bendixson rank of $A$} and is denoted by $|A|_{CB}$. Moreover, we define $A^{\infty} = A^{|A|_{CB}}$.  For a fixed compact metrizable space $X$, the set of all countable compact subsets of $X$ is a $\Pi^1_1$ set and the Cantor-Bendixson rank is a $\Pi^1_1$-rank on this set.  \cite[Section 6]{Kechris} 

Returning to our particular case of $\textup{CPE}(X)$, the following results will allow us to use the rank method. 
The first result is probably known but we include it for the sake of completeness.
\begin{proposition}\label{cpeiscoanalytic}
Let $X$ be a compact metrizable space. Then, $\textup{CPE}(X)$  is coanalytic. 
\end{proposition}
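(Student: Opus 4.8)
The plan is to show that $\textup{CPE}(X)$ is the complement of an analytic set, equivalently that the collection of TDSs \emph{failing} CPE is analytic. The key is to unpack the definition: by Theorem~\ref{thm:cpecharac}, $(X,T)$ has CPE if and only if $\Gamma^\infty(E(X,T)) = X^2$, and since $\Gamma^\infty(E(X,T))$ is always a closed equivalence relation containing $\Delta_X$, failure of CPE means that $\Gamma^\infty(E(X,T))$ is a \emph{proper} closed equivalence relation on $X$. So $T\notin \textup{CPE}(X)$ iff there exist points $x\ne y$ that are \emph{not} identified by $\Gamma^\infty(E(X,T))$, i.e. there is a nontrivial partition of $X$ into two closed $\Gamma^\infty$-saturated pieces (equivalently, a nontrivial factor with zero entropy).

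First I would fix a countable basis $\{B_n\}$ of $X$ and observe that $T\mapsto E(X,T)$ is a Borel map from $\textup{TDS}(X)$ into the Polish space $K(X^2)$ of compact subsets of $X^2$ with the Hausdorff metric; indeed, the relation ``$(x,y)\in E(X,T)$'' can be written with the basic-open-set data using the positive-density independence-set condition, which is arithmetic in $T$ and the basis, so the map is Borel (this is essentially the content of Proposition~\ref{cpeiscoanalytic}'s companion fact, and the same style of argument as the well-known fact that $E(X,T)$ is closed). Next, I would handle the transfinite iteration $\Gamma^\alpha$: the point is \emph{not} to track the ordinal $\alpha$ but to characterize membership in $\Gamma^\infty(E)$ directly. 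For a closed symmetric $E$, $\Gamma^\infty(E)$ is the smallest closed equivalence relation containing $E$; hence $(x,y)\notin\Gamma^\infty(E(X,T))$ iff there is a closed equivalence relation $R\subseteq X^2$ with $E(X,T)\subseteq R$ and $(x,y)\notin R$. Therefore
\[
T\notin\textup{CPE}(X)\iff \exists\, R\in K(X^2)\ \big[R \text{ is an equivalence relation},\ E(X,T)\subseteq R,\ R\ne X^2\big].
\]
The quantifier ``$\exists R\in K(X^2)$'' is an existential quantifier over a Polish space, and the matrix is Borel in $(T,R)$: ``$R$ is symmetric, contains $\Delta_X$, and $R=R^+$'' are closed/Borel conditions on $R\in K(X^2)$ (closedness of $R$ under the transitive-closure-plus-closure operation $\Gamma$ is a $G_\delta$-type condition, routinely checked with the basis), ``$E(X,T)\subseteq R$'' is Borel in $(T,R)$ since $T\mapsto E(X,T)$ is Borel and inclusion is closed in $K(X^2)^2$, and ``$R\ne X^2$'' is open in $R$. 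Projecting out $R$ gives that $\{T: T\notin\textup{CPE}(X)\}$ is analytic, so $\textup{CPE}(X)$ is coanalytic.

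The main obstacle I anticipate is the reduction from the transfinite $\Gamma$-iteration to the single closed-equivalence-relation quantifier — i.e. verifying carefully that $\Gamma^\infty(E)$ really is the smallest closed equivalence relation containing $E$ (one direction is the fixed-point property $\Gamma(\Gamma^\infty E)=\Gamma^\infty E$ from the Proposition at the start of the section, plus the fact that any closed equivalence relation containing $E$ is $\Gamma$-invariant and so contains every $\Gamma^\alpha(E)$ by transfinite induction) and that the family of $R\in K(X^2)$ which are closed equivalence relations forms a Borel set. The latter amounts to checking that $R\mapsto R^+$ and $R\mapsto \overline{R}$ interact Borel-measurably with the Hausdorff topology, which is standard but should be stated. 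Once these two bookkeeping points are in place, the projection argument is immediate and yields the claim. A secondary point worth a sentence is the Borel-measurability of $T\mapsto E(X,T)$, which can be cited in the spirit of ``$E(X,T)$ is closed'' together with the basis-level description of independence sets of positive density.
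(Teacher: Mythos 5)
Your proof is correct, but it takes a genuinely different route from the paper's. The paper's argument is very short: it quotes Vetokhin's theorem that $T \mapsto h_{\text{top}}(X,T)$ is Borel (in fact Baire class two) on $\textup{TDS}(X)$, and then observes that CPE is, by its very definition, a universal quantifier (over a suitable Polish parameterization of the non-trivial factors) applied to the Borel condition ``this factor has positive entropy,'' hence $\Pi^1_1$. You instead stay entirely inside the local-entropy framework: you invoke Theorem~\ref{thm:cpecharac} to replace the factor definition by $\Gamma^{\infty}(E(X,T))=X^2$, use the Borelness of $T\mapsto E(X,T)$ (which is exactly Proposition~\ref{Borelext} of the paper, so you need not reprove it), and --- the genuinely new ingredient --- collapse the transfinite iteration into a single existential quantifier via the observation that $\Gamma^{\infty}(E)$ is the smallest closed equivalence relation containing $E$. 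That reduction is sound: the fixed point $\Gamma^{\infty}(E)$ is itself a closed equivalence relation, and any closed equivalence relation $R\supseteq E$ satisfies $\Gamma(R)=R$ and therefore absorbs every $\Gamma^{\alpha}(E)$ by transfinite induction. The two bookkeeping points you flag do go through, though the Borelness of $\{R\in K(X^2): R=R^{+}\}$ deserves the extra sentence you promise: one should quantify the outer points of a failed transitivity chain over \emph{closed} basic sets so that the common middle point can be extracted by compactness; the relevant operations $R\mapsto \pi_2\bigl(R\cap(\overline{U}\times X)\bigr)$ are upper semicontinuous on $K(X^2)$, hence Borel. What your approach buys is self-containedness (no appeal to Vetokhin and no need to code up the space of all factors) together with an explicit countable-resource witness for the failure of CPE; the cost is a longer verification. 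One small caveat: your opening gloss that a proper closed equivalence relation yields ``a nontrivial partition of $X$ into two closed saturated pieces'' is not accurate in general, but it is purely motivational and plays no role in the argument.
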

\begin{proof}
 Vetokhin showed \cite{Vetokhin} that the assignment which takes a TDS to its entropy is Borel (in fact, of Baire class two). This implies that \[
 \{T\in \textup{TDS(X)} : (X,T)\textup{ s.t. } h_{top}(X,T)>0\}
 \]
 is Borel. By the definition, we have that a TDS has CPE if and only if all its non-trivial factors have positive entropy. As CPE is characterized by the quantifier $\forall$ on a Borel condition, we obtain that  $\textup{CPE}(X)$ is coanalytic. 
\end{proof}

The following result was proved in the context of effective descriptive set theory \cite[Corollary 2]{westrick2019topological}. Using a notion of Borel expansion we give an alternate classical descriptive set theoretic proof in \cite{usarxiv}. 
\begin{theorem}\label{rankpi11}
Let $X$ be a compact metrizable space. The entropy rank is a $\Pi^1_1$-rank on $\textup{CPE}(X)$.
\end{theorem}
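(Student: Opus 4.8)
The plan is to recognize the entropy rank as the pullback, along a Borel map $T\mapsto E(X,T)$, of the (already standard) $\Pi^1_1$-rank attached to iterating a monotone Borel ``derivative'' — here the $\Gamma$-operation — on closed subsets of $X^2$. Concretely: (i) show $T\mapsto E(X,T)\in K(X^2)$ is Borel; (ii) show that $F\mapsto\Gamma\text{-rank}(F)$ is a $\Pi^1_1$-rank on the set $\Omega$ of closed symmetric $F$ with $\Gamma^\infty(F)=X^2$; (iii) combine, using Theorem~\ref{thm:cpecharac} to identify $\textup{CPE}(X)$ with the preimage of $\Omega$.

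\textbf{Step (i): $T\mapsto E(X,T)$ is Borel.} Fix a countable base $\{B_n\}_{n\in\N}$ of $X$; since $E(X,T)$ is closed and $X^2$ second countable, it is determined by the set of basic boxes it avoids. By the Kerr--Li combinatorial description of IE-pairs (\cite{KerrLiMA}; \cite[Theorem 12.19]{kerr2016ergodic}), one has the clean identity
$E(X,T)^c=\bigcup\{B_i\times B_j:\{B_i,B_j\}\text{ has no independence set of positive density}\}$:
the inclusion $\supseteq$ is immediate from the IE-pair definition, while $\subseteq$ uses the Kerr--Li fact that if $\{A_1,A_2\}$ has a positive-density independence set then $E(X,T)$ meets $\overline{A_1}\times\overline{A_2}$. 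Now ``$\{B_i,B_j\}$ has an independence set of positive density'', which is a priori only $\Sigma^1_1$ in $T$, is by the same combinatorial description equivalent to linear growth $\limsup_n \psi^T_{i,j}(n)/n>0$, where $\psi^T_{i,j}(n)$ is the largest size of an independence set for $\{B_i,B_j\}$ inside $\{1,\dots,n\}$. Each finite assertion ``$\bigcap_{k\in J}T^{-k}(Y_k)\neq\emptyset$'' (with $Y_k\in\{B_i,B_j\}$) is an \emph{open} condition on $T$ in the uniform topology — it is the union over a countable dense set of points $x$ of the open conditions $T^k x\in Y_k$, using that $T\mapsto T^k x$ is continuous — so $T\mapsto\psi^T_{i,j}(n)$ is lower semicontinuous, hence Borel, and so is $\{T:\limsup_n\psi^T_{i,j}(n)/n>0\}$. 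Thus the $2^{\N\times\N}$-code of the open set $E(X,T)^c$ depends Borel-measurably on $T$, i.e.\ $T\mapsto E(X,T)\in K(X^2)$ is Borel. This is the delicate step, and is presumably what \cite{usarxiv} organizes as a Borel expansion.

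\textbf{Steps (ii)--(iii): the derivative machinery and the pullback.} The map $F\mapsto\Gamma(F)=\overline{F^{+}\cup\Delta_X}$ is a monotone Borel self-map of $K(X^2)$ (transitive closure, union with $\Delta_X$, and topological closure are Borel operations, and $F\subseteq F'\Rightarrow\Gamma(F)\subseteq\Gamma(F')$), and by the Proposition on strictly increasing chains of closed sets its transfinite iteration stabilizes at a countable ordinal. Passing to complements realizes $\Gamma$ as a decreasing Borel derivative on open subsets of $X^2$ (with interiors of intersections at limit stages), which is exactly the situation handled by the abstract theory of derivative-induced ranks (\cite[Section 34]{Kechris}), just as for the Cantor--Bendixson rank: the set $\Omega=\{F\in K(X^2):F\text{ symmetric},\ \Gamma^{\infty}(F)=X^2\}$ is $\Pi^1_1$ and $F\mapsto\Gamma\text{-rank}(F)$ is a $\Pi^1_1$-rank on $\Omega$ (outside $\Omega$ the iteration still stabilizes, only not at $X^2$ — the analogue of a non-countable compact set for the Cantor--Bendixson rank). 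By Theorem~\ref{thm:cpecharac}, $T\in\textup{CPE}(X)$ iff $\Gamma^{\infty}(E(X,T))=X^2$, i.e.\ iff $E(X,T)\in\Omega$, and then the entropy rank of $(X,T)$ equals $\Gamma\text{-rank}(E(X,T))$. Since $\textup{CPE}(X)$ is $\Pi^1_1$ (Proposition~\ref{cpeiscoanalytic}) and the pullback of a $\Pi^1_1$-rank along a Borel map is again a $\Pi^1_1$-rank — the two witnessing relations $\mathcal P,\mathcal Q$ of Definition~\ref{pi11} pull back under $T\mapsto E(X,T)$, one staying $\Sigma^1_1$ and the other $\Pi^1_1$ — we conclude that the entropy rank is a $\Pi^1_1$-rank on $\textup{CPE}(X)$.

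The main obstacle is Step (i): producing a genuinely Borel dependence of $E(X,T)$ on $T$ out of the a priori merely analytic clause ``there exists a positive-density independence set'', which is precisely where one must invoke the Kerr--Li reduction to linear growth of the maximal-independence-set function and the closedness/hereditarity of $E(X,T)$. A secondary point is bookkeeping: one must handle the closure built into $\Gamma$ and the limit stages of the iteration so that the derivative machinery computes the $\Gamma$-rank exactly, rather than merely bounding it; alternatively one can encode the $\Gamma$-analysis of $(X,T)$ as a well-founded tree that is Borel in $T$ (with finite branchings for the links of a chain witnessing membership in $F^{+}$ and sequential branchings for membership in a closure) whose ordinal rank is the entropy rank, and then invoke the standard uniformly-$\Sigma^1_1$-and-$\Pi^1_1$ comparison of ranks of well-founded trees.
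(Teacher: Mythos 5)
The paper does not actually prove Theorem~\ref{rankpi11} in this document: it attributes the result to Westrick (in the effective setting) and to the authors' companion paper \cite{usarxiv}, which is said to proceed via a notion of ``Borel expansion''. So there is no in-paper proof to compare yours against line by line; what can be said is that your outline is consistent with the route the paper indicates, and your Step~(i) essentially reproduces the paper's own Proposition~\ref{Borelext}, down to the same use of the Kerr--Li combinatorial reformulation (\cite[Lemma 3.2]{KerrLiMA}) to convert the a priori $\Sigma^1_1$ clause ``there exists a positive-density independence set'' into a countable Boolean combination of open conditions on $T$. Your Steps~(ii)--(iii) are the expected derivative-rank argument plus pullback along a Borel map, and the pullback step is correct as stated. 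The one place where your write-up papers over real work is the claim that the $\Gamma$-iteration is ``exactly the situation handled by'' the derivative machinery of \cite[Section 34]{Kechris}: that theory is stated for \emph{contractive} derivatives on $K(E)$ with plain intersections at limit stages, whereas $\Gamma$ is expansive with closures of unions at limits, and its complements live among open (non-compact) sets; one must either prove the dual version for expansive Borel derivatives (this is precisely the ``Borel expansion'' of \cite{usarxiv}) or carry out the well-founded-tree encoding you mention as an alternative. That step is standard but not literally a citation, so in a full write-up it would need to be done rather than asserted.
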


Examples of TDSs of arbitrarily high entropy rank have been found for Cantor maps \cite{barbieri2020} and subshifts \cite{salo2019entropy}. These examples are not transitive (let alone mixing). 
By applying the rank method to the coanalytic set, $\textup{Mix}(X) \cap \textup{CPE}(X)$ ($X$ being a Cantor space), we will see in Theorem~\ref{mainmixincpe} that it is not Borel.

\section{When CPE and UPE are equivalent}
In this section we show that UPE is always a Borel property and we will explain how in some situations CPE and UPE are equivalent properties. As mentioned before, any TDS with UPE also has CPE. We show that any TDS with the the shadowing property that has CPE also has UPE. It is known that for graph maps, the properties of being mixing and having UPE are equivalent \cite[Page 107]{ruette2017chaos}. Hence, for mixing graph maps, we also have that having CPE and having UPE coincide. 

The fact that the class of TDSs that have UPE is Borel can be proved using rank theory, i.e., the collection of objects whose rank is bounded under a $\Pi^1_1$-rank forms a Borel set. We give an alternative proof which may be useful in determining the exact descriptive complexity of the class of maps that have UPE on a given space. While in this paper we won't study the complexity of Borel sets, we note that Borel complexity has been studied in dynamical contexts in \cite{clemens2009isomorphism,gao2016group,jackson2020borel}.

We denote with $K(X)$, the collection of nonempty compact subsets of $X$ endowed with the Hausdorff metric. For a definition of the Hausdorff metric see \cite[Section 4.5.23]{engleking1989general}. Note that if $X$ is compact then so is $K(X)$.
Let $\mathcal{U}=\{U_{1},...,U_{n}\}$ be a collection of open sets of $X$. We
define \[{A}_{\mathcal{U}}\mathcal{=}\left\{  B\in K(X):B\subseteq 
\cup_{i=1}^{n}U_{i}\textup{ and }B\cap U_{i}\neq\emptyset\textup{ for all }1\leq
i\leq n\right\} .\] We have that $\{A_{\mathcal{U}}\}_{\mathcal{U}}$
forms a basis for the topology on $K(X)$ induced by the Hausdorff metric. 

\begin{proposition}\label{Borelext}
Let $X$ be a compact metrizable space and let $$E: \textup{TDS(X)} \rightarrow K(X \times X)$$ be given by $E(T) = E(X,T)$. Then, $E$ is a Borel map.
\end{proposition}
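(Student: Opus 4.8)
The plan is to establish Borel measurability of $E$ by checking preimages of a generating family for the Borel $\sigma$-algebra of $K(X\times X)$. Since $X\times X$ is compact metrizable, that $\sigma$-algebra is generated by the countably many sets $[W]:=\{K\in K(X\times X):K\cap W\ne\emptyset\}$ with $W$ a basic open rectangle; the sets $\{K:K\subseteq W\}$ also lie in it, because for compact $K$ and closed $F$ one has $K\cap F=\emptyset$ iff $K$ misses some open neighborhood of $F$. So it suffices to show that $\{T:E(X,T)\cap(B\times B')\ne\emptyset\}$ is Borel for every pair $B,B'$ in a fixed countable basis $\{B_n\}$ of $X$. (If one insists $\emptyset\notin K(X\times X)$, this is harmless: $\{T:E(X,T)=\emptyset\}$ is then the complement of $\bigcup_{n,m}\{T:E(X,T)\cap(B_n\times B_m)\ne\emptyset\}$, hence Borel once those sets are, and $E$ is constant on it.)

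The key reduction is to a combinatorial condition. Using that $X$ is regular — so each $B_i$ is the union of the basic sets whose closures it contains — and that enlarging the target sets only enlarges the sets $\bigcap_jT^{-j}(\cdot)$, one checks the following equivalence: $E(X,T)\cap(B_i\times B_j)\ne\emptyset$ if and only if there exist $k,l$ with $\overline{B_k}\subseteq B_i$, $\overline{B_l}\subseteq B_j$ such that the pair of closed sets $(\overline{B_k},\overline{B_l})$ has an independence set of positive density for $(X,T)$. Indeed, given $(x_1,x_2)\in E(X,T)\cap(B_i\times B_j)$, choose basic sets $B_k\ni x_1$ and $B_l\ni x_2$ with $\overline{B_k}\subseteq B_i$, $\overline{B_l}\subseteq B_j$; by the definition of an IE-pair, $(B_k,B_l)$, hence $(\overline{B_k},\overline{B_l})$, has a positive-density independence set. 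Conversely, by the localization theorem of Kerr and Li for IE-tuples (\cite{KerrLiMA}, see also \cite[Chapter 12]{kerr2016ergodic}), if the closed tuple $(\overline{B_k},\overline{B_l})$ has a positive-density independence set then $\overline{B_k}\times\overline{B_l}$ contains an IE-pair, which lies in $B_i\times B_j$. Since this equivalence writes $\{T:E(X,T)\cap(B_i\times B_j)\ne\emptyset\}$ as a countable union over $(k,l)$, it remains only to prove: for fixed closed $C,D\subseteq X$, the set $\{T:(C,D)\text{ has an independence set of positive density for }(X,T)\}$ is Borel.

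For this, put $E_1=C$, $E_2=D$, and call a finite $J\subseteq\N$ \emph{good for }$T$ if $\bigcap_{j\in J}T^{-j}(E_{\sigma(j)})\ne\emptyset$ for every $\sigma:J\to\{1,2\}$. The relation $\{(T,x):T^j(x)\in E_{\sigma(j)}\text{ for all }j\in J\}$ is closed in $\textup{TDS}(X)\times X$, since $(T,x)\mapsto T^j(x)$ is continuous for the uniform topology; as $X$ is compact, the projection $\textup{TDS}(X)\times X\to\textup{TDS}(X)$ is a closed map, so $\{T:\bigcap_{j\in J}T^{-j}(E_{\sigma(j)})\ne\emptyset\}$ is closed, and hence so is $\{T:J\text{ good for }T\}$ (a finite intersection over $\sigma$). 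Since subsets of good sets are good, $g(T,n):=\max\{|J|:J\subseteq\{1,\dots,n\},\ J\text{ good for }T\}$ has $\{T:g(T,n)\ge m\}=\bigcup_{|J|=m}\{T:J\text{ good for }T\}$ closed, so $T\mapsto g(T,n)$ is upper semicontinuous, and therefore $T\mapsto\limsup_n g(T,n)/n$ is Borel. Finally, a combinatorial lemma of Kerr and Li on independence sets (\cite{KerrLiMA}) gives that $(C,D)$ has an independence set of positive density if and only if $\limsup_n g(T,n)/n>0$, so $\{T:(C,D)\text{ has a positive-density independence set}\}=\bigcup_{q\in\mathbb Q_{>0}}\{T:\limsup_n g(T,n)/n\ge q\}$ is Borel. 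This completes the argument.

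The step I expect to be the main obstacle is this last equivalence. A priori, "$(C,D)$ has an independence set of positive density" is merely an \emph{analytic} condition on $T$: it asserts the existence of $I\in 2^\N$ lying in the set of independence sets — which is closed in $(T,I)$ — and in the Borel set of sequences of positive lower density, and projecting out the compact variable $I$ yields only an analytic set, which need not obviously be coanalytic. What collapses it to the Borel condition $\limsup_n g(T,n)/n>0$ is precisely the combinatorial density lemma invoked above, whose proof genuinely uses the structure of independence sets (e.g. their closure under shifting towards $0$), and without which the identification fails. The remaining ingredients — the reduction through the Vietoris generators, the passage to closures using the definition of IE-pair and the Kerr–Li localization theorem, and the closedness of the projection along the compact factor $X$ — are routine.
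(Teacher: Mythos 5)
Your proof is correct and follows essentially the same route as the paper's: both reduce the problem to showing that $\{T: E(X,T)\cap (U\times V)\neq\emptyset\}$ is Borel for basic open rectangles, and both rely on the Kerr--Li combinatorial characterization of positive-density independence (their Lemma 3.2) to rewrite that apparently analytic condition as a countable Boolean combination of closed (in your version) or open (in the paper's version) conditions on $T$. The one place you are more careful than the paper is the open-versus-closed localization step --- the paper asserts the equivalence of ``$E(X,T)$ meets $U\times V$'' with the density condition for the open pair $(U,V)$ directly, whereas you correctly shrink to basic sets with closures inside and invoke the Kerr--Li localization theorem for closed tuples; this is a worthwhile refinement but not a different method.
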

\begin{proof}
Let $U,V$ be open in $X$. We will first observe that
\[\{ T \in \textup{TDS(X)}:  \ E(T) \cap (U \times V)\neq \emptyset\}\tag{\(\dagger\)}
\]
is Borel. Indeed, using an equivalent definition of independence given in \cite[Lemma 3.2]{KerrLiMA} we have that $\dagger$ is satisfied by $T$ if and only if there is a rational number $ r >0 $ such that for all $l \in \N$ there is an interval $I \subseteq \N$ with $|I| \ge l$ and a finite set $F \subseteq I$ with $|F| \ge r |I|$ such that $F$ is an independent set for $(U,V)$.
It is easy to verify that for fixed $U,V, r, l, I, F$ set
\[
   \{T \in \textup{TDS(X)}: F \textup{ is an independent set for } (U,V) \textup{ for } T  \tag{\(\ddagger\)} \}  
\]
is open. Now the set in $\dagger$ is the result of a sequence of  countable union and countable intersections of sets of type $\ddagger$. Hence $\dagger$ is Borel. Since $X$ has a countable basis, by taking unions, we have that $\dagger$ is Borel when $U \times V$ is replaced by any open set $W \subseteq X \times X$. Every closed set in $X \times X$ is the monotonic intersection of a sequence of open sets in $X \times X$. This and the fact that $E(T)$ is closed imply that $\dagger $ is Borel when $U \times V$ is replaced by a closed set $C \subseteq X \times X$. Reformulating the last statement, we have that for all open $W \in X \times X$, the set 
\[\{ T \in \textup{TDS(X)}:  \ E(T) \subseteq W \}\tag{\(\diamond\)}
\]
is Borel. Putting $\dagger$ and $\diamond$ together, we have that 
\[\{T \in \textup{TDS(X)}:  E(T) \subseteq \cup_{i=1}^n  W_i \ \  \&  \  E(T) \cap W_i \neq \emptyset, 1 \le i \le n \}
\]
is Borel whenever $W_1, \ldots,W_n$ are open in $X^2$, completing proof.

\end{proof}

Now using Proposition \ref{prop:characUPE}, we obtain the following. 
\begin{corollary}
\label{cor:upeborel}
Let $X$ be a compact metrizable space. We have that $\textup{UPE}(X)$ is Borel. 
\end{corollary}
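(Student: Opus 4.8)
The plan is to combine the characterization of UPE in Proposition~\ref{prop:characUPE} with the Borel measurability of the IE-pair assignment from Proposition~\ref{Borelext}. By Proposition~\ref{prop:characUPE}, a system $(X,T)$ has UPE if and only if $E(X,T) = X^{2}$. Writing $E \colon \textup{TDS}(X) \rightarrow K(X \times X)$ for the map $T \mapsto E(X,T)$, this says precisely that
\[
\textup{UPE}(X) = E^{-1}\bigl(\{X^{2}\}\bigr).
\]
Since $X$ is compact (and nonempty), $X^{2}$ is a nonempty compact subset of $X \times X$, hence a genuine point of $K(X \times X)$; and a singleton in a metric space is closed, therefore Borel. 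As $E$ is a Borel map by Proposition~\ref{Borelext}, the preimage under $E$ of the closed singleton $\{X^{2}\}$ is Borel, which is exactly the assertion.

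The only point that deserves a moment's attention is the well-definedness of $E$ as a map into $K(X \times X)$: one should confirm that $E(X,T)$ is always a nonempty compact subset of $X \times X$. If one prefers to allow $E(X,T) = \emptyset$ for low-complexity systems, then one simply observes that $\emptyset \neq X^{2}$, so such systems are automatically excluded from $\textup{UPE}(X)$ and cause no difficulty. Beyond this bookkeeping there is essentially no obstacle: all the substance has already been packaged into Propositions~\ref{prop:characUPE} and~\ref{Borelext}, and the corollary is just the remark that a Borel map pulls closed points back to Borel sets.

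For completeness, one can also indicate the rank-theoretic route alluded to in the text. By Theorem~\ref{rankpi11} the entropy rank is a $\Pi^{1}_{1}$-rank on $\textup{CPE}(X)$, and a standard property of $\Pi^{1}_{1}$-ranks is that for each fixed countable ordinal $\alpha$ the sublevel set $\{T \in \textup{CPE}(X) : \textup{entropy rank of } (X,T) \le \alpha\}$ is Borel, even when $\textup{CPE}(X)$ itself is not. Taking $\alpha = 0$ and using that, among systems with CPE, having UPE is equivalent to having entropy rank $0$ (Theorem~\ref{thm:cpecharac} together with Proposition~\ref{prop:characUPE}), one again concludes that $\textup{UPE}(X)$ is Borel. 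The direct argument above is shorter and avoids Theorem~\ref{rankpi11} altogether, which is presumably why the text records it as an immediate consequence of Proposition~\ref{prop:characUPE}.
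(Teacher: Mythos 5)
Your proof is correct and follows exactly the route the paper intends: the corollary is stated immediately after Proposition~\ref{Borelext} and is derived by combining it with the characterization $\textup{UPE}(X) = E^{-1}(\{X^2\})$ from Proposition~\ref{prop:characUPE}, with the singleton $\{X^2\}$ closed in $K(X\times X)$. The extra remarks on well-definedness of $E$ and the alternative rank-theoretic argument (which the paper itself mentions as the other possible proof) are accurate but not needed.
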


Let $(X,T)$ be a TDS and $\delta >0$. We say $\left\{ x_{n}\right\} _{n\in 
\mathbb{N}}\subseteq X$ is a $\delta $\textbf{-pseudo orbit} if $d(T(x_{n}),x_{n+1})
\leq \delta $ for every $n\in \mathbb{N}$ (if $x_n$ is indexed on a finite interval instead of $\mathbb{N}$ we say it is a \textbf{finite $\delta$-pseudo-orbit}). \ We say $(X,T)$ has the \textbf{(finite) 
shadowing property}, if for every $\varepsilon
>0$, there exists $\delta >0$ such that for every (finite) $\delta $-pseudo orbit$,
\left\{ x_{n}\right\} _{n\in \mathbb{N}}$, there exists $y\in X$ such that $
d(x_{n},T^{n}(y))\leq \varepsilon $. The shadowing property is also known as the \textit{pseudo-orbit tracing property}. In the context of this paper (compact metrizable spaces), the finite shadowing property is equivalent to the shadowing property.

\begin{proposition}\label{shadowing}
Let $(X,T)$ be a TDS with the shadowing property. Then, $E(X,T)=E(X,T)^+$. 
\end{proposition}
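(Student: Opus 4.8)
Since $E(X,T)\subseteq E(X,T)^{+}$ always holds, the content is the reverse inclusion, and by induction on the length of a connecting chain it suffices to prove that $E(X,T)$ is transitive: $(x,y),(y,z)\in E(X,T)\implies (x,z)\in E(X,T)$. The cases $x=y$ and $y=z$ are trivial; and since an independence set for $\{A_{1},A_{2}\}$ is a fortiori an independence set for the singleton $\{A_{1}\}$ (only the constant configuration is relevant), the hypothesis $(x,y)\in E$ already gives $(x,x),(y,y)\in E$, which covers $x=z$. So one may assume $x,y,z$ pairwise distinct, fix open sets $A\ni x$ and $B\ni z$, and aim to produce a positive-density independence set for $\{A,B\}$.

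The first step is to extract the geometric content of being an IE-pair. If $(a,b)\in E$ and $U\ni a$, $W\ni b$ are open, there is an independence set $F$ for $\{U,W\}$, which after a downward translation may be assumed to contain $0$; realizing the two-point configuration ``$U$ at $0$, $W$ at $k$'' for some $k\in F$ with $k\ge 1$ yields $p\in U$ with $T^{k}p\in W$, and then $a,Tp,T^{2}p,\dots,T^{k-1}p,b$ is a $\delta$-chain from $a$ to $b$ whenever $U,W$ have radius $\le\delta$ and $U$ is small enough for the uniform continuity of $T$ at $a$. Swapping the roles of $U$ and $W$ gives a $\delta$-chain from $b$ to $a$. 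Applying this to $(x,y)$ and to $(y,z)$ shows that $x,y,z$ all lie in a single chain transitive component; consequently, for every $\delta>0$ there are $\delta$-chains joining any two of $x,y,z$ in either direction and $\delta$-loops based at $y$, all of which concatenate (endpoints meeting exactly), the loop lengths form an additive sub-semigroup of $\N$, and — using that $(x,y),(y,z)\in E$ force the cyclic indices of $x,y,z$ in the component to coincide — the lengths of these connecting chains all lie in one residue class modulo the period of the component, so that after discarding finitely many lengths they can be fitted together freely.

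Next I would use shadowing. Pick $\varepsilon>0$ with $B(x,2\varepsilon)\subseteq A$ and $B(z,2\varepsilon)\subseteq B$, let $\delta$ be a shadowing constant for $\varepsilon$, and fix $\eta\le\delta/2$ small enough that $T$ moves $\eta$-balls by less than $\delta/2$; set $U=B(x,\eta)$, $W=B(y,\eta)$, $V=B(z,\eta)$. Fix once and for all a finite independence set for $\{U,W\}$ realizing a pattern ``$W$ near the left end, $U$ at a central slot, $W$ near the right end'' inside a window of fixed length, and similarly a finite independence set for $\{W,V\}$ realizing ``$W,V,W$''. Let $I$ be a sparse arithmetic progression of such central slots, with consecutive slots separated by a large gap $g$ chosen in the residue class singled out above; then $I$ has positive density $1/g$. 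Given a finite $J\subseteq I$ and a configuration $\sigma\colon J\to\{A,B\}$, one builds a $\delta$-pseudo-orbit on $[\min J,\max J]$ by concatenation: around each $j\in J$ insert the appropriate genuine orbit segment — in $U$ at time $j$ (so in $A$ after $\varepsilon$-shadowing) if $\sigma(j)=A$, in $V$ at time $j$ if $\sigma(j)=B$, and in $W$ at both ends of the segment so that it traces the orbit of $y$ there; between two consecutive segments, splice in a $\delta$-chain into $y$ matching the end of the left segment, then a $\delta$-loop at $y$ of exactly the length needed to reach the next slot, then a $\delta$-chain out of $y$ matching the start of the right segment. Each junction is a legal $\delta$-step because both sides lie within $\eta$ — hence within $\delta/2$ after one application of $T$ — of the corresponding point of the $y$-orbit, and the chains were taken with $\delta/2$-accuracy. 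A point $r$ that $\varepsilon$-shadows this pseudo-orbit then satisfies $T^{j}r\in A$ at the $A$-slots and $T^{j}r\in B$ at the $B$-slots, i.e. $r$ witnesses $\sigma$. As $J$ and $\sigma$ were arbitrary, $I$ is a positive-density independence set for $\{A,B\}$, and hence $(x,z)\in E(X,T)$.

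The main obstacle is precisely this last step: making one positive-density set $I$ serve every configuration simultaneously. The slot spacing has to be arranged so that the connecting blocks — whose lengths depend on $\delta$ through the shadowing constant, the fixed independence patterns, and the chain and loop lengths, but not on $\sigma$ — fit between slots no matter which of the four transition types $AA$, $AB$, $BA$, $BB$ occurs at a given gap; this is exactly what the semigroup-and-congruence information about chain/loop lengths in the chain transitive component is needed for, and it is also why the $\{U,W\}$- and $\{W,V\}$-segments must be routed through the common point $y$, so that consecutive genuine orbit segments can be spliced along a shared stretch of the $y$-orbit while the whole string remains a $\delta$-pseudo-orbit. The remainder — the bookkeeping of these lengths and the check that $I$ genuinely has positive density — is routine but lengthy.
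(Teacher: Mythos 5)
Your overall architecture is the same as the paper's: reduce to transitivity of $E(X,T)$, use the independence property to extract genuine orbit segments realizing prescribed patterns in small balls, concatenate these into a $\delta$-pseudo-orbit that visits $A$ or $B$ according to an arbitrary configuration on an arithmetic progression $\{gk\}_{k\in\N}$, and then shadow. The place where your argument has a genuine gap is exactly the step you flag as ``the main obstacle'': the claim that the connecting chains and loops have lengths lying in a single residue class modulo ``the period of the component,'' so that the four transition blocks $AA$, $AB$, $BA$, $BB$ can all be padded to the same gap $g$. As written, this is asserted via ``cyclic indices of $x,y,z$ in the chain transitive component coincide,'' which does not follow from the bare existence of $\delta$-chains in both directions; to make it work you would have to go back to the independence sets and observe that realizing the all-$W$ configuration over the \emph{same} times as your $W$--$U$--$W$ pattern shows that the head and tail lengths of your slot segments are themselves $\delta$-loop lengths at $y$, hence multiples of the relevant gcd. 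Without that, nothing guarantees that a single $g$ accommodates all four junction types simultaneously, and the ``routine but lengthy'' remainder is precisely where the proof could fail.

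The paper avoids this bookkeeping entirely by exploiting independence more fully at the outset: from an independence set for $\bigl(B(a_1,\delta/2),B(a_2,\delta/2)\bigr)$ it extracts a single time $n_1$ at which \emph{all} two-point patterns over $\{0,n_1\}$ are simultaneously realizable, which yields genuine orbit segments of length exactly $n_1$ joining $a_1$ to $a_2$, $a_2$ to $a_1$, and looping at $a_1$ and at $a_2$ with the common period $n_1$ (and similarly $n_3$ for the pair $a_2,a_3$). Setting $N=2n_1n_3$, the divisibility of $N$ by both $n_1$ and $n_3$ lets one write down four explicit finite $\delta$-pseudo-orbits of length exactly $N$, one for each transition type, with matching endpoints; the synchronization problem never arises. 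So your route is salvageable but requires an additional argument at its crux, and the paper's choice of $n_1$, $n_3$, and $N=2n_1n_3$ is the missing idea that replaces your congruence claim.
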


\begin{proof} Let $(a_{1},a_{2}),(a_{2},a_{3})\in E(X,T)$ and $A_{1}$, $A_{3}$ open sets, such that $a_{1}\in A_{1}$ and $a_{3}\in A_{3}$. Let $\varepsilon >0$,
such that $B(a_{1},2\varepsilon )\subseteq A_{1}$ and $B(a_{3},2\varepsilon
)\subseteq A_{3}$, and $\delta >0$ be a witness for the shadowing property of $(X,T)$
with respect to $\varepsilon $. We may assume, without loss of generality, that $\delta <\varepsilon $. Since $
(a_{1},a_{2}),(a_{2},a_{3})\in E(X,T)$, there exist $n_{1},n_{3}>1$ and 
\[
\left\{ y{(i,j)}\right\} _{(i,j)\in 
\left\{ 1,2,3\right\} ^{2}} \subseteq X,
\]
such that \[y{(i,j)}\in T^{-n_{1}}B(a_{j},\delta
/2)\cap B(a_{i},\delta /2)\] for every $(i,j)\in \left\{ 1,2\right\} ^{2}\setminus \{(2,2)\}$,
\[ y{(i,j)}\in T^{-n_{3}}B(a_{j},\delta /2)\cap B(a_{i},\delta /2)\] for
every $(i,j)\in \left\{ 2,3\right\} ^{2}\setminus \{(2,2)\}$ and 
\[ y_i{(2,2)}\in T^{-n_{i}}B(a_{2},\delta /2)\cap B(a_{2},\delta /2)\] for every $i\in \{1,3\}$.

Let $N=2n_1 n_3$. We will show that $\{Nk\}_{k\in \N}$ is an independence set for $(A_1,A_3)$, yielding that $(a_1, a_3) \in E(X,T)$. For this we will show that given $f:\left\{ Nk\right\} _{k\in \mathbb{N}}\rightarrow \left\{ 1,3\right\}$, we can find a $y \in X$, depending on $f$, such that $T^{Nk}(y) \in A_{f(Nk)}$ for all $k \in \N$, i.e., $y \in \cap_{k \in \N} T^{-Nk}(A_{f(Nk)}) \neq \emptyset$. The idea of the proof is to find a (full) $\delta $-pseudo-orbit which approximates $\{a_{f(kN)}\}_{k \in \N}$  on $\{Nk\}_{k\in \N}$. Then, the $y\in X$ which shadows this $\delta $-pseudo-orbit is the desired point.

First we claim that there exist four finite $
\delta $-pseudo-orbits on $[0,N]$, $\{x_{m}{(i,j)}\}_{m \in [0,N]}$ with $i,j\in\{1,3\}$, such that $
x_{0}{(i,j)}\in B(a_{i},\delta /2)$ and $x_{N}{(i,j)}\in B(a_{j},\delta /2)
$. Note that if we \textit{paste} two of these finite pseudo-orbits $x_m(i,j)$ and $x_{m}(j,i')$ we obtain a finite $\delta$-pseudo-orbit of size $2N$. 
Let $m\in \mathbb{N}$. There exists $k\geq 0$ such that $m\in [Nk,N(k+1))$.
We define \[x_{m}=x_{m \bmod {N}}{(f(Nk),f(Nk+N))}.\] With the pasting finite pseudo-orbits argument, one can check that $\left\{ x_{m}\right\} _{m\in \mathbb{N}}$ is
a $\delta$-pseudo-orbit. Using the shadowing property, there exists $y\in X$ such that 
\[
d(x_{n},T^{n}(y))\leq \varepsilon 
\]
for all $n\in \mathbb{N}$. Thus, for every $m\in \left\{ Nk\right\} _{k\in \mathbb{N}}$ we
have that \[T^{m}(y)\in B(a_{f(m)},\varepsilon +\delta )\subseteq A_{f(m)},\] yielding that $(a_{1},a_{3})\in E(X,T)$. 

It remains to show that the claimed finite pseudo-orbits exist. 

For $i\in \left\{ 1,3\right\}$, we define $
x_{m}{(i,i)}=T^{m\bmod{n_i}}(y{(i,i)})$, $m \in [0,N]$. From the fact that 
\[ y{(i,i)}\in T^{-n_{i}}B(a_{i},\delta /2)\cap B(a_{i},\delta /2) \iff T^{n_i} (y{(i,i)}) \in B(a_{i},\delta /2)\cap T^{-n_{i}} (B(a_{i},\delta /2)),\] 
it follows that the distance between $y(i,i)$ and $T^{n_i}(y (i,i))$ is less than $\delta$ and  $\{x_{m}{(i,i)}\}_{m \in [0,N]}$ is finite $\delta $-pseudo-orbit.

For $(i,j)\in  \left\{ (1,3), (3,1) \right\}$ and $m \in [0,N]$, we define
\[x_{m}{(i,j)}=
\begin{cases}
T^{m}(y{(i,2)}) & m\in \lbrack 0,n_{i}) \\ 
T^{m\bmod {n_{i}}}(y_i{(2,2)}) & m\in \lbrack n_{i},n_{i}n_{j}) \\ 
T^{m\bmod {n_{j}}}(y{(2,j))} & m\in \lbrack n_{i}n_{j},n_{j}n_{i}+n_j) \\ 
T^{m\bmod {n_{j}}}(y{(j,j)}) & m\in \lbrack n_{j}n_{i}+n_j,2n_{i}n_{j}].

\end{cases}
\]
Considering how $y(i,j)$ were constructed, we have that $x_{0}{(i,j)}=y(i,2)\in B(a_{i},\delta /2)$, and $x_{N}{(i,j)}=x_{2n_in_j}{(i,j)}=y(j,j)\in B(a_{j},\delta /2)$. Furthermore, one can check that it is indeed a finite $\delta$-pseudo orbit. There are only four places where we have to check that the $\delta$-pseudo orbit condition holds. First, note that since $Tx_{n_i-1}=T^{n_i}(y(i,2))\in B(a_{2},\delta /2)$ and $x_{n_i}=y_i(2,2)\in B(a_{2},\delta /2)$, then $d(Tx_{n_i-1},x_{n_i})\leq \delta$. The proofs for the jumps at $n_in_j$, $n_j(n_i+1)$, and $2n_in_j$ are similar.  
\end{proof}

\begin{corollary}\label{upeshadow}
Let $(X,T)$ be a TDS with the shadowing property. Then $(X,T)$ has UPE if and only if it has CPE. 
\end{corollary}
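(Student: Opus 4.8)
The plan is to derive the corollary from the $\Gamma$-rank characterizations of CPE (Theorem~\ref{thm:cpecharac}) and UPE (Proposition~\ref{prop:characUPE}), using Proposition~\ref{shadowing} as the single nontrivial input. Since UPE implies CPE for every TDS — a fact already recorded above — only the implication ``CPE $\Rightarrow$ UPE'' needs proof, and essentially all the work for it has already been done in Proposition~\ref{shadowing}; the corollary is then just a matter of assembling the pieces.

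First I would fix $E := E(X,T)$ and collect the facts available to us: $E$ is a closed symmetric subset of $X^2$; if $(X,T)$ has CPE then $\Delta_X \subseteq E$ (as noted just after Proposition~\ref{prop:characUPE}); and, under the shadowing hypothesis, $E = E^+$ by Proposition~\ref{shadowing}. Then I would compute
\[
\Gamma(E) \;=\; \overline{E^{+} \cup \Delta_X} \;=\; \overline{E} \;=\; E,
\]
so that $\Gamma^{\alpha}(E) = E$ for every ordinal $\alpha$; in particular the entropy rank of $(X,T)$ is $0$ and $\Gamma^{\infty}(E) = E$. Finally I would invoke Theorem~\ref{thm:cpecharac}: CPE says that the $\Gamma$-iteration of $E$ stabilizes at $X^2$ at the entropy rank, i.e.\ $\Gamma^{\infty}(E) = X^2$, which combined with the previous line forces $E = X^2$. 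By Proposition~\ref{prop:characUPE} this is exactly the statement that $(X,T)$ has UPE, completing the proof.

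I do not expect any genuine obstacle at this stage: Proposition~\ref{shadowing} already carries the combinatorial weight, and what remains is bookkeeping. The only things to double-check are that $E(X,T)$ is indeed closed and symmetric (stated before Theorem~\ref{thm:basic}), that $\Delta_X \subseteq E(X,T)$ really follows from CPE rather than merely from positive entropy, and that Proposition~\ref{shadowing} is being applied to the system at hand. If one preferred not to lean on the cited remark, one could instead prove directly that CPE forces $\Delta_X \subseteq E(X,T)$ and slot that in; but since the paper already states it, citing it is the cleanest route.
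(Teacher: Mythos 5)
Your proof is correct and follows the same route as the paper: reduce to the implication CPE $\Rightarrow$ UPE, invoke Proposition~\ref{shadowing} to get $E(X,T)=E(X,T)^{+}$, and combine with Theorem~\ref{thm:cpecharac} and Proposition~\ref{prop:characUPE} to force $E(X,T)=X^{2}$. In fact you fill in a detail the paper's two-line proof glosses over, namely that $\Delta_X\subseteq E(X,T)$ (from CPE) is needed to conclude $\Gamma(E)=E$ and hence that the entropy rank is $0$.
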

\begin{proof}
If a TDS has UPE then necessarily it has CPE. 
Assume that $(X,T)$ has CPE, by Theorem \ref{thm:cpecharac} there is a countable ordinal $\alpha$ such that 
\[
\Gamma^{\alpha}(E(X,T)) = X^2.
\]
By Proposition~\ref{shadowing}, it follows that $\alpha=0$, so by Proposition \ref{prop:characUPE}, $(X,T)$ has UPE.
\end{proof}

We refer the reader to  \cite[Theorem 3.8]{kulczycki2014} for other characterizations of chaotic properties of TDSs with the shadowing property. 

Given a compact metrizable space $X$ we define 
\[
\textup{Shad}(X)=\{T\in \textup{TDS}(X):(X,T)\textup{ has the shadowing property}\}.
\]

\begin{proposition}\label{shadowborel}
Let $X$ be a compact metrizable space. Then $\textup{Shad}(X)$ is Borel. 
\end{proposition}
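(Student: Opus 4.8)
Proof proposal for Proposition~\ref{shadowborel} ($\textup{Shad}(X)$ is Borel).

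The plan is to exhibit the complement of $\textup{Shad}(X)$ as a countable Boolean combination of projections of \emph{closed} sets along the compact coordinate $X^{k+1}$, and to use the fact that such projections are closed. Since on a compact metrizable space the finite shadowing property coincides with the shadowing property, I would work throughout with finite $\delta$-pseudo-orbits, so that all the relevant objects live on the compact space $X^{k+1}$ rather than on $X^{\N}$.

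The first step is to record the needed continuity facts. Arguing by induction on $i$ one checks that $(T,y)\mapsto T^{i}(y)$ is continuous on $\textup{TDS}(X)\times X$ (the base case uses $d(T'(y'),T(y))\le \sup_{z} d(T'(z),T(z))+d(T(y'),T(y))$, combining uniform convergence with continuity of $T$). Consequently, for each $k\ge 1$ the map $(T,x_{0},\dots,x_{k})\mapsto \max_{0\le i<k} d(T(x_{i}),x_{i+1})$ is continuous on $\textup{TDS}(X)\times X^{k+1}$, and so is
\[
g_{k}(T,x_{0},\dots,x_{k})=\min_{y\in X}\ \max_{0\le i\le k} d(x_{i},T^{i}(y)),
\]
the continuity of $g_{k}$ following from the standard fact that minimizing a jointly continuous function over a fixed compact set produces a continuous function. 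The point of $g_{k}$ is that a finite $\delta$-pseudo-orbit $(x_{0},\dots,x_{k})$ for $T$ is $\varepsilon$-shadowed by some $y\in X$ precisely when $g_{k}(T,x_{0},\dots,x_{k})\le \varepsilon$.

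Next I would fix rationals $\varepsilon,\delta>0$ and integers $k,n\ge 1$ and set
\[
C_{\varepsilon,\delta,k,n}=\Bigl\{(T,\bar x)\in \textup{TDS}(X)\times X^{k+1}:\ \max_{0\le i<k} d(T(x_{i}),x_{i+1})\le\delta \ \text{ and }\ g_{k}(T,\bar x)\ge \varepsilon+\tfrac{1}{n}\Bigr\},
\]
which is closed by the previous paragraph. Letting $D_{\varepsilon,\delta,k,n}$ be its image under the projection $\textup{TDS}(X)\times X^{k+1}\to \textup{TDS}(X)$, and using that this projection is a closed map because $X^{k+1}$ is compact, each $D_{\varepsilon,\delta,k,n}$ is closed. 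Finally one verifies, directly from the definition of shadowing, that
\[
\textup{TDS}(X)\setminus\textup{Shad}(X)=\bigcup_{\varepsilon\in\mathbb{Q}^{+}}\ \bigcap_{\delta\in\mathbb{Q}^{+}}\ \bigcup_{k,n\ge 1} D_{\varepsilon,\delta,k,n}.
\]
For the inclusion ``$\subseteq$'' one uses that if $T$ fails shadowing at some real $\varepsilon'>0$ it also fails at every smaller rational $\varepsilon$, that every rational $\delta$ is in particular an admissible real $\delta$, and that ``$(x_{0},\dots,x_{k})$ not $\varepsilon$-shadowed'' means $g_{k}(T,\bar x)>\varepsilon$, i.e.\ $g_{k}(T,\bar x)\ge \varepsilon+1/n$ for some $n$; for ``$\supseteq$'' one uses that a rational $\delta$-pseudo-orbit is also a real $\delta'$-pseudo-orbit for every $\delta'\ge\delta$. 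The right-hand side is a countable union of countable intersections of countable unions of closed sets, hence Borel, so $\textup{Shad}(X)$ is Borel.

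The step I expect to be the main obstacle is precisely the one forcing the ``$g_{k}\ge \varepsilon+1/n$'' decomposition: the naive rendering of ``there exists a $\delta$-pseudo-orbit that is not $\varepsilon$-shadowed'' is the projection of a set of the form (closed $\cap$ open), whose projection along a compact fiber need not be closed or even Borel, leaving only a co-analytic bound on $\textup{Shad}(X)$. Rewriting $\{g_{k}>\varepsilon\}=\bigcup_{n}\{g_{k}\ge\varepsilon+1/n\}$ replaces the open condition by a countable union of closed conditions, so that every projection that appears is of a genuinely closed set and stays closed; care is needed only in matching the non-strict/strict inequalities in the definitions of pseudo-orbit and of shadowing with this decomposition.
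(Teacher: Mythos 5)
Your proof is correct, but it takes a genuinely different route from the paper's. The paper also reduces to the finite shadowing property, but then discretizes: it invokes the fact that on a compact metrizable space finite shadowing can be tested with both the pseudo-orbit and the tracing point drawn from a fixed countable dense set $D$, after which the defining formula has only countable quantifiers over the open conditions $d(x,y)<1/n$, and Borelness is immediate. You instead keep the quantifiers over the uncountable space $X^{k+1}$ and over $y\in X$, and tame them with compactness: the tracing quantifier is absorbed into the continuous function $g_k(T,\bar x)=\min_{y\in X}\max_i d(x_i,T^i(y))$, and the pseudo-orbit quantifier becomes a projection of a closed subset of $\textup{TDS}(X)\times X^{k+1}$ along a compact fiber, hence closed; the $\ge\varepsilon+1/n$ trick correctly avoids projecting a non-closed set. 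Your argument requires the two continuity lemmas (joint continuity of $(T,y)\mapsto T^i(y)$ and continuity of a minimum over a fixed compact set) but dispenses with the dense-set reduction that the paper cites from the literature, and as a bonus it pins $\textup{TDS}(X)\setminus\textup{Shad}(X)$ down at a low finite level of the Borel hierarchy (a countable union of countable intersections of $F_\sigma$ sets), which the paper's formulation also yields but does not make as explicit. Both proofs share the initial step that finite shadowing is equivalent to shadowing on compact spaces.
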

\begin{proof}
Recall that for compact metrizable spaces, the finite shadowing property  is equivalent to the shadowing property. Moreover, given a compact metrizable space $X$ and a dense subset $D \subseteq X$, we have that $(X,T)$ has the finite shadowing property if and only if $(X,T)$ has the finite shadowing property with pseudo-orbit as well as the tracing point taken in $D$. That is, $T \in \textup{Shad}(X)$ if and only if
\begin{align*}
    & \forall n\in\N  \  \exists m  \in \N\textup { such that} \\   & \left [ \forall p \in \N, \  \{y_i\}_{i=0}^p \in D  \textup { with }   d(T(y_i), y_{i+1}) < 1/n , \  \forall 0 \le i \le p-1   \right ]  \\
    & \exists y \in D \textup{ such that } d(T^i(y), y_i) < 1/m, \  \forall 0 \le i \le p.
\end{align*}
(See the proof of \cite[Lemma 3.1]{fernandez2016shadowing} for an analogous characterization.)
In the above reformulation, all of the quantifiers are over countable sets and the condition $d(x,y) < \epsilon$ is open, implying that $\textup{Shad}(X)$ is Borel.  
\end{proof}

\begin{corollary}\label{shadowcpeborel}
Let $X$ be a compact metrizable space. Then $\textup{Shad}(X)\cap \textup{CPE}(X)$ is Borel. 
\end{corollary}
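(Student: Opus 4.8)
The plan is to reduce this to results already in hand. By Proposition~\ref{shadowborel} the set $\textup{Shad}(X)$ is Borel, and by Corollary~\ref{cor:upeborel} the set $\textup{UPE}(X)$ is Borel. The crucial point is Corollary~\ref{upeshadow}: for every individual $T\in\textup{Shad}(X)$, the system $(X,T)$ has CPE if and only if it has UPE. Applying this pointwise gives the set-theoretic identity
\[
\textup{Shad}(X)\cap\textup{CPE}(X)=\textup{Shad}(X)\cap\textup{UPE}(X),
\]
since $T$ belongs to the left-hand side exactly when $(X,T)$ has the shadowing property and CPE, which by Corollary~\ref{upeshadow} is the same as having the shadowing property and UPE (recall UPE always implies CPE, so no containment is lost in the other direction either). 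The right-hand side is an intersection of two Borel sets and hence Borel, which finishes the proof.

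There is no real obstacle here beyond invoking the earlier work; all the substantive content lives in Proposition~\ref{shadowing} (shadowing forces $E(X,T)=E(X,T)^+$, hence the $\Gamma$-rank to be $0$ once $\Gamma^\infty(E(X,T))=X^2$), Proposition~\ref{shadowborel} (the Borel characterization of the shadowing property via finite pseudo-orbits over a countable dense set), and Corollary~\ref{cor:upeborel} (Borelness of UPE via Proposition~\ref{Borelext}). The one thing worth spelling out is that the equivalence in Corollary~\ref{upeshadow} is genuinely pointwise in $T$, so the displayed equality of subsets of $\textup{TDS}(X)$ is literally valid; after that the conclusion is immediate. If one wished to bypass Corollary~\ref{cor:upeborel} altogether, one could instead note that $\textup{Shad}(X)\cap\textup{CPE}(X)$ is coanalytic by Propositions~\ref{cpeiscoanalytic} and~\ref{shadowborel}, while the displayed identity exhibits it as Borel — but the direct argument above is the most economical route.
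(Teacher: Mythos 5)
Your proof is correct and follows exactly the paper's own argument: the pointwise equivalence from Corollary~\ref{upeshadow} gives the identity $\textup{Shad}(X)\cap\textup{CPE}(X)=\textup{Shad}(X)\cap\textup{UPE}(X)$, and the result then follows from Proposition~\ref{shadowborel} and Corollary~\ref{cor:upeborel}. Nothing further is needed.
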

\begin{proof}
From Proposition~\ref{upeshadow}, we have that  $\textup{Shad}(X)\cap \textup{CPE}(X) = \textup{Shad}(X)\cap \textup{UPE}(X)$. Using Propositions~\ref{shadowborel}  and Corollary~\ref{cor:upeborel}, we conclude our result.
\end{proof}

A TDS, $(X,T)$, is \textbf{algebraic} if $X$ is a compact abelian group and $T$ a group automorphism. An algebraic TDS has UPE if and only if it has CPE \cite{chungLi2015}. For recent results regarding the local entropy theory of algebraic actions see \cite{barbieri2019markovian}. 

On mixing graph maps we also see the same phenomenon. We say that a set $X$ is a \textbf{topological graph} if there exist a  finite number of pairwise disjoint intervals $[a_n,b_n]$ with $0<n\leq N$, and $Q \subseteq \{a_1,...,a_N,b_1,...,b_N\}^2$ such that $X=\cup_n[a_n,b_n]/Q$. 
We say that a TDS is a \textbf{graph map} if $X$ is a topological graph. 
\begin{theorem}
\label{thm:blokh}
\cite{blokh1984transitive},\cite[Page 107]{ruette2017chaos} Let $(X,T)$ be a graph map. Then $(X,T)$ is mixing if and only if it has UPE.
\end{theorem}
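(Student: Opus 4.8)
The plan is to prove the two implications separately; the direction ``$\mathrm{UPE}\Rightarrow\mathrm{mixing}$'' is soft, and all the real content is in ``$\mathrm{mixing}\Rightarrow\mathrm{UPE}$''. For the first direction I would just chain two facts already available: a system with UPE is weakly mixing (Blanchard \cite{Blanchard1992}, recalled in Section~2), and for a graph map weak mixing is equivalent to mixing (classical for the interval; for graphs see \cite[Page 107]{ruette2017chaos}). Hence UPE forces mixing, with nothing new to prove.

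For ``$\mathrm{mixing}\Rightarrow\mathrm{UPE}$'', the key one-dimensional input, due to Blokh \cite{blokh1984transitive}, is that a mixing graph map is \emph{locally eventually onto}: for every nonempty open $U\subseteq X$ there is $n$ with $T^n(U)=X$. Since a mixing map is surjective ($T^n(X)$ is dense, hence all of the compact space $X$, for large $n$, and then $T(X)=X$), this upgrades to $T^m(U)=X$ for all $m\ge n$. Granting this, I would reduce the theorem to a routine construction. By Proposition~\ref{prop:characUPE} it suffices to prove $E(X,T)=X^2$, i.e.\ that every pair $(x_1,x_2)$ is an IE-pair; unwinding the definition of IE-pair, it is enough to show that \emph{every} pair of nonempty open sets $U,V\subseteq X$ admits an independence set for $\{U,V\}$ of positive density.

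So fix nonempty open $U,V$. Shrink them to open balls $B_U\subseteq U$ and $B_V\subseteq V$, and using local eventual onto-ness choose $N\in\N$ so that $T^m(B_U)=T^m(B_V)=X$ for all $m\ge N$. I claim $I:=\{N,2N,3N,\dots\}$ is an independence set for $\{U,V\}$; as it has density $1/N>0$, this completes the proof. To verify the claim, take a finite $J=\{j_1<\cdots<j_k\}\subseteq I$ and any $Y_{j_1},\dots,Y_{j_k}\in\{U,V\}$, and let $B^{(i)}\in\{B_U,B_V\}$ be the ball sitting inside $Y_{j_i}$. Define nonempty open sets $C_k\supseteq\cdots\supseteq C_1$ by downward recursion:
\[
C_k:=B^{(k)},\qquad C_i:=B^{(i)}\cap T^{-(j_{i+1}-j_i)}(C_{i+1})\quad(1\le i<k).
\]
Each $C_i$ is open, and it is nonempty because $j_{i+1}-j_i$ is a positive multiple of $N$, so $T^{j_{i+1}-j_i}(B^{(i)})=X\supseteq C_{i+1}$. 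Now choose $x\in X$ with $T^{j_1}(x)\in C_1$ (possible since $T^{j_1}$ is onto). An immediate induction on $i$ shows $T^{j_i}(x)\in B^{(i)}\subseteq Y_{j_i}$ for every $i$, so $\bigcap_{i=1}^{k}T^{-j_i}(Y_{j_i})\ne\emptyset$, which is exactly what is required.

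The only real obstacle is the external fact that mixing graph maps are locally eventually onto; everything downstream is elementary. In the actual write-up I would either invoke it directly from \cite{blokh1984transitive} and \cite[Page 107]{ruette2017chaos}, or sketch its proof: since $T$ maps connected sets to connected sets, a small arc inside $B_U$ cannot keep all its iterates small (transitivity forbids it), so its images eventually span every edge of the graph, after which mixing applied to the finitely many edges forces a further iterate to equal $X$.
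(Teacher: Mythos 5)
First, a point of comparison: the paper does not prove this statement at all --- Theorem~\ref{thm:blokh} is imported as a black box from Blokh and from Ruette's book, so there is no internal proof to measure you against. Your overall architecture is sound and matches how the result is established in the literature: UPE forces weak mixing (Blanchard), weak mixing forces mixing for graph maps; conversely, one shows every pair of nonempty open sets admits an independence set of positive density, whence $E(X,T)=X^2$ and Proposition~\ref{prop:characUPE} gives UPE. Your downward recursion producing the sets $C_i$ and the arithmetic-progression independence set is correct as written, granted the covering input.

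The genuine gap is that covering input. ``Mixing graph maps are locally eventually onto'' is strictly stronger than topological mixing and is not what Blokh's theorem provides. For an interval map, mixing guarantees only that for every nondegenerate $J$ and every $\varepsilon>0$ one has $f^n(J)\supseteq[\varepsilon,1-\varepsilon]$ for all large $n$; the endpoints (and, for a general graph, the degree-one vertices) need never be covered --- a mixing interval map need not be topologically exact, the obstruction being a non-accessible endpoint. Your closing ``sketch'' does not repair this: images of open sets under a graph map need not be open, and a connected compact set whose iterates are $\varepsilon_n$-dense with $\varepsilon_n\to 0$ need never \emph{equal} $X$ at any fixed stage, so ``mixing forces a further iterate to equal $X$'' does not follow. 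Fortunately the fix is two lines: since the endpoint set $E$ of the graph is finite and $X$ is perfect, choose $B_U\subseteq U$ and $B_V\subseteq V$ with $\overline{B_U}\cup\overline{B_V}$ disjoint from $E$, and invoke the correct form of Blokh's covering lemma, namely $T^m(B_U)\supseteq K$ and $T^m(B_V)\supseteq K$ for all $m\ge N$, where $K=\overline{B_U}\cup\overline{B_V}$ is a compact set avoiding $E$. Your recursion only ever needs $T^{j_{i+1}-j_i}(B^{(i)})\supseteq C_{i+1}\subseteq K$, so it goes through verbatim, and since $U,V$ were arbitrary nonempty open sets you still get $E(X,T)=X^2$. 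Alternatively, quote Blokh's actual conclusion (the specification property for mixing graph maps) and derive the independence sets from that; either route is legitimate, but the statement you isolated as ``the only real obstacle'' should not be asserted in the form you gave.
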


\begin{remark}
\label{rem:mix}
To prove that a TDS is mixing it is enough to check the conditions on a countable basis of the topology. With this, one can see that for any compact metrizable space $X$, $\textup{Mix}(X)$ is Borel. 
\end{remark}
 
\begin{corollary}\label{cor:graphBorel}
Let $X$ be a topological graph. Then $\textup{Mix}(X)\cap \textup{CPE}(X)$ is Borel.
\end{corollary}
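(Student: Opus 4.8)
The plan is to show that on a topological graph the set $\textup{Mix}(X)\cap\textup{CPE}(X)$ coincides exactly with $\textup{UPE}(X)$, and then to invoke the Borelness of the latter. Before doing this I would recall the two ingredients already available: first, every TDS with UPE has CPE (this is recorded in the discussion immediately following the definition of UPE and CPE); second, Theorem~\ref{thm:blokh} of Blokh, which says that a graph map is mixing if and only if it has UPE.

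The core step is then the set equality $\textup{Mix}(X)\cap\textup{CPE}(X)=\textup{UPE}(X)$, proved by two inclusions. If $T\in\textup{UPE}(X)$, then $(X,T)$ has CPE, so $T\in\textup{CPE}(X)$, and by Theorem~\ref{thm:blokh} $(X,T)$ is mixing, so $T\in\textup{Mix}(X)$; hence $T\in\textup{Mix}(X)\cap\textup{CPE}(X)$. Conversely, if $T\in\textup{Mix}(X)\cap\textup{CPE}(X)$, then in particular $(X,T)$ is mixing, and Theorem~\ref{thm:blokh} applies to \emph{every} graph map, so $(X,T)$ has UPE, i.e. $T\in\textup{UPE}(X)$. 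This gives the equality. (The same argument incidentally shows $\textup{Mix}(X)\subseteq\textup{CPE}(X)$ on a topological graph, so the intersection in fact equals $\textup{Mix}(X)$ as well; either description works for the conclusion.)

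Finally I would conclude: by Corollary~\ref{cor:upeborel}, $\textup{UPE}(X)$ is Borel, hence so is $\textup{Mix}(X)\cap\textup{CPE}(X)$. There is no genuine obstacle in this corollary — all the substance sits in Theorem~\ref{thm:blokh} and Corollary~\ref{cor:upeborel}, both available — and the only points requiring a little care are citing the implication ``UPE $\Rightarrow$ CPE'' correctly and noting that Blokh's theorem is used in both directions, which is legitimate since it characterizes mixing graph maps outright rather than assuming CPE.
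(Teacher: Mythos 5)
Your argument is correct and is exactly the paper's route: the paper's proof of this corollary is the one-line observation that it follows from Theorem~\ref{thm:blokh} and Corollary~\ref{cor:upeborel}, and your write-up simply makes explicit the set equality $\textup{Mix}(X)\cap\textup{CPE}(X)=\textup{UPE}(X)$ that this entails. Your parenthetical remark that the intersection also equals $\textup{Mix}(X)$ is a correct and harmless bonus.
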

\begin{proof}
This follows from the fact that $\textup{Mix}(X)$ is Borel, Theorem~\ref{thm:blokh} and Corollary~\ref{cor:upeborel}
\end{proof}

\section{Complexity of systems with CPE on manifolds and graphs}
In this section we show that the collection of TDSs on $X=[0,1]^d$ having CPE is complete coanalytic. We also show how one can adapt the proof for other compact orientable topological manifolds and topological graphs.

Throughout this section, $I =[0,1]$.
\begin{theorem}\label{intcomplete}
We have that $\textup{CPE}(I^d)$ is complete coanalytic.
\end{theorem}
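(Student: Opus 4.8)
The plan is to combine Proposition~\ref{cpeiscoanalytic}, which already gives that $\textup{CPE}(I^d)$ is coanalytic, with a Borel reduction from a classical coanalytic-complete set. As source set I would take the set of all countable compact subsets of the Cantor set $C$, which is $\Pi^1_1$-complete by Hurewicz's theorem (\cite[p.~245]{Kechris}); the compact subsets of $C$ form a Polish space under the Hausdorff metric. By Proposition~\ref{prop:coanalyticbasic} it then suffices to construct a Borel map $K\mapsto T_K$ from the space of compact subsets of $C$ into $\textup{TDS}(I^d)$ such that $(I^d,T_K)$ has CPE if and only if $K$ is countable.

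First I would peel off the extra dimensions. For $d\ge 2$ fix the full tent map $g\in\textup{TDS}(I)$: it is mixing, hence has UPE by Theorem~\ref{thm:blokh}, so $E(I,g)=I^2$ by Proposition~\ref{prop:characUPE}. Put $S=g\times\cdots\times g\in\textup{TDS}(I^{d-1})$; by Theorem~\ref{thm:basic}(2), $E(I^{d-1},S)=(I^{d-1})^2$, so $S$ has UPE and in particular CPE. The map $\Phi\colon\textup{TDS}(I)\to\textup{TDS}(I^d)$, $T\mapsto T\times S$, is an isometry for the uniform metrics, hence Borel, and by Theorem~\ref{thm:basic}(4) the system $(I^d,T\times S)$ has CPE if and only if $(I,T)$ does. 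Composing with $\Phi$ (taking $\Phi=\mathrm{id}$ when $d=1$), it is enough to build a Borel assignment $K\mapsto T_K\in\textup{TDS}(I)$ with $(I,T_K)$ having CPE exactly when $K$ is countable.

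This one-dimensional construction is the heart of the matter, and the point I want to make is that, because we do \emph{not} require $T_K$ to be transitive, it can be built along the lines of the known examples of CPE systems of arbitrarily large entropy rank \cite{barbieri2020}, with the single parameter $K$ driving the rank. Concretely, $T_K$ should be designed so that iterating the $\Gamma$-derivative on the (closed, symmetric) set of IE-pairs $E(I,T_K)$ mimics the Cantor--Bendixson derivative on $K$: each $\Gamma$-step absorbs one Cantor--Bendixson layer of $K$, so that $\Gamma^{\alpha}(E(I,T_K))=I^2$ for some countable $\alpha$ --- i.e., $(I,T_K)$ has CPE by Theorem~\ref{thm:cpecharac} --- precisely when the perfect kernel of $K$ is empty, that is, when $K$ is countable. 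When $K$ is uncountable the perfect kernel should produce a persistent proper closed equivalence relation $\Gamma^{\infty}(E(I,T_K))\subsetneq I^2$, equivalently a nontrivial zero-entropy factor, so CPE fails. Since $T_K$ is to be given by explicit formulas in terms of the gap structure of $K$ inside $C$, the map $K\mapsto T_K$ is continuous for the Hausdorff/uniform metrics, hence Borel, which is all the reduction needs.

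The main obstacle is exactly the explicit design of $T_K$, and within it the demand that the system be CPE-\emph{rigid} in the countable case: a construction assembled out of $T_K$-invariant subintervals would automatically acquire a nontrivial zero-entropy factor by collapsing those pieces, so the various ``pieces'' encoding $K$ must genuinely interact under iteration, while a zero-entropy factor must nonetheless survive whenever $K$ has a perfect kernel, and $T_K$ must stay continuous in the state variable and Borel in $K$. Once a map with these features is written down, checking ``CPE $\iff$ $K$ countable'' via Theorems~\ref{thm:cpecharac} and~\ref{thm:basic} together with the Cantor--Bendixson rank is routine --- this is the sense in which the proof is ``easy to verify once the main idea is stated'' --- and Remark~\ref{rem:topmanifolds} records that the same device, composed with a standard embedding, transfers the result to topological graphs and to compact orientable manifolds.
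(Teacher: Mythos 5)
Your reduction framework is correct and is essentially the paper's: reduce Hurewicz's coanalytic-complete set of countable compact sets via Proposition~\ref{prop:coanalyticbasic}, and use Theorem~\ref{thm:basic}(4) to pass from $d=1$ to general $d$ by taking products (the paper uses $\psi(A)^d$ rather than $T_K\times S$, an immaterial difference). But the proposal never produces the map $K\mapsto T_K$; it only lists properties such a map ought to have and defers "the explicit design of $T_K$" as "the main obstacle." Since that construction \emph{is} the theorem --- the surrounding reduction is routine --- what you have written is a plan rather than a proof. The equivalence ``$(I,T_K)$ has CPE iff $K$ is countable'' is asserted, not established, because there is no map to check it against.

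Moreover, your guiding intuition points away from the (quite simple) construction that actually works. The paper's $\psi(A)$ is the identity on $A$ and a scaled copy of one fixed mixing (hence UPE) interval map on $\overline{J}$ for each contiguous interval $J\in\C(A)$. These pieces are invariant subintervals that do not interact dynamically at all --- precisely the design you argue must be avoided on the grounds that collapsing the pieces would ``automatically'' yield a nontrivial zero-entropy factor. That collapse yields a nontrivial factor exactly when $A$ is uncountable; when $A$ is countable the factor is trivial, because the operator $\Gamma$ (closure followed by transitive closure) merges the blocks $\overline{J}\times\overline{J}$ through shared endpoints and accumulation points of $A$, and a transfinite induction gives
\[
\Gamma^{\alpha}\bigl(E(I,\psi(A))\bigr)=\bigcup_{J\in\C(A^{\alpha})}\bigl(\overline{J}\times\overline{J}\bigr)\cup\Delta_I ,
\]
so the $\Gamma$-derivative literally tracks the Cantor--Bendixson derivative of $A$ and Theorem~\ref{thm:cpecharac} finishes the argument. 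No ``genuine interaction under iteration'' is needed; the mechanism is purely topological. Until you write down a concrete $T_K$ and verify an identity of this kind, the heart of the proof is missing.
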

\begin{proof}
We will first show the proof for $d=1$. Once complete, it will be easily lifted to the case of $X= I^d$.

We recall a classical theorem of Hurewicz:
$\cQ$, the collection  of countable compact subsets of $I$, is a  $\Pi^1_1$-complete subset of $K(I)$, \cite[Theorem 27.5]{Kechris}. In light of  Proposition~\ref{prop:coanalyticbasic}, it suffices to construct a continuous function $\psi: K(I) \rightarrow \text{TDS}(I)$ such that $\psi(A)$ has CPE if and only if $A$ is countable. 

In this proof we fix $T:I \rightarrow I$, as
\[
T(x) =
\begin{cases}
3x & 0\leq x\leq1/3\\
2-3x & 1/3<x\leq2/3\\
3x-2 & 2/3<x\leq1.
\end{cases}
\]
It is easy to check that $(I,T)$ is mixing and hence, using Theorem \ref{thm:blokh}, $E(I,T)=I^2$. For a closed interval $J =[a,b]$, let $T_J: J \rightarrow J$ be the scaled copy of $T$ on interval $J$. Again, we have that $E(J,T_J)= J \times J$.

For each $A \in K(I)$, we let $\C(A)$ be the collection of all intervals contiguous to $A$, i.e, the collection of maximal connected components of $[0,1]\setminus A$.

Now we construct our function $\psi: K(I) \rightarrow \text{TDS}(I)$. Let $A \in K(I)$, and $\psi(A) : [0,1] \rightarrow [0,1]$ be a function that is the identity on $A$ and $T_{\overline{J}}$ on $\overline{J}$ for each $J \in \C(A)$. It is easily seen that $\psi(A)$ is a well-defined  element of $\text{TDS}(I)$. Moreover, $\psi$ is continuous.

For every $J \in \C (A)$, we have that $\psi(A)(\overline{J})=\overline{J}$. This implies that if $x\in \overline{J}$ and $y\notin \overline{J}$ then $(x,y)\notin E(\psi (A))$. Furthermore, as the restriction of  $\psi(A)$ to $\overline{J}$ is a scaled copy of $T$, we have that $\overline{J} \times \overline{J} \subseteq E(I,\psi(A))$, implying that
\[ E(I,\psi(A)) =  \bigcup _{J \in \C(A)} \left (\overline{J} \times \overline{J} \right ) \cup \Delta_I.
\]
Now, using transfinite induction and basic properties of Cantor-Bendixson derivatives, we have that for each countable ordinal $\alpha$, \[ \Gamma^{\alpha} (E(I,\psi(A))) =  \bigcup _{J \in \C(A^{\alpha})} \left (\overline{J} \times \overline{J} \right ) \cup \Delta_I.
\]  Where $A^{\alpha}$ is the $\alpha^{th}$ Cantor-Bendixson derivative of $A$. Hence,
\[
\Gamma^{\infty}(E(I, \psi(A)))=\bigcup _{J \in \C(A^{\infty})} \left (\overline{J} \times \overline{J} \right ) \cup \Delta_I.
\]
Using Theorem \ref{thm:cpecharac} and the fact that $A$ is countable if and only if $A^{\infty}$ is empty,  we conclude that $(I,\psi(A))$ has CPE if and only if $A$ is countable, completing the proof of the case of $d=1$.

For the case of arbitrary $d$, we define $\psi_d : K(I) \rightarrow \text{TDS}(I^d)$ by simply taking the countable product, i.e.,
\[\psi_d(A) = \psi(A)^d.
\]
By Theorem \ref{thm:basic} (4), we have that $(I^d, \psi(A)^d)$ has CPE if and only if $(I, \psi(A))$ has CPE; this implies that $(I^d,\psi_d(A))$ has CPE if and only if $A$ is countable. This completes the proof.
\end{proof}

\begin{remark} \label{rem:topmanifolds}
 Observe that for each $A \in K(I)$, we have that $\psi_d(A)$ is a functions which takes the boundary of $I^d$ onto the boundary of $I^d$. Identifying the boundary of $I^d$ to a single point, we obtain that $CPE(S^d)$ is complete coanalytic where $S^d$ is the $d$-dimensional sphere. By taking finite products of $S^1$, we obtain that $CPE(\mathbb{T}^d)$ is complete coanalytic where $\mathbb{T}^d$ is the $d$-dimensional torus. Using these techniques, one can show that $CPE(X)$ is complete coanalytic  where $X$ is a topological graph or a compact orientable manifold.
\end{remark}

 The systems constructed in this section are not even transitive. Obtaining mixing systems will require an intricate, lengthier construction which will be carried out in the next section.
 

\section{Complexity of mixing systems with CPE}\label{sec:mixingNonBorel}
In this section, we will prove the main result of the paper, that is, we will show that the set of all mixing TDSs having CPE on a Cantor space is a coanalytic set that is not Borel. First we give a brief summary of the outline of the proof.  

Let $C \subseteq [0,1]$ be the Cantor set, and $\Kcal$ be the collection of compact subsets of $(0,1) \cap C$. For each $A \in \Kcal$, we will construct a mixing TDS $(X_A,T_A)$, so that $(X_A,T_A)$ has CPE if and only if $A$ is countable.  Moreover, $X_A$ will be homeomorphic to the Cantor set for all $A \in \K $. Although, this assignment, $\Kcal\rightarrow \textup{TDS}(C)$, is rather natural, it does not seems  be Borel. Hence, we will rely on the rank method to prove that the collection of mixing maps having CPE is not Borel, i.e., we will show that for each countable ordinal $\alpha$ there is system $(X_A,T_A)$ having CPE so that the entropy rank of $(X_A,T_A)$ is greater than $\alpha$. (If the assignment was Borel we would get complete coanalytic.) 

The construction of $(X_A,T_A)$, $A \in \K$, is rather intricate and involved. It will be carried out in two parts. In Section~\ref{topcon}, we will construct, for each $A \in \K$, $(Y_A, R_A, B_A)$ so that  $(Y_A, R_A)$ is a topological dynamical system and $B_A \subseteq Y_A^2$ is closed; moreover, it will have the property that $\Gamma^{\infty} (B_A) = Y_A ^2$ if and only if $A$ is countable. In addition, we will show that for each countable ordinal $\alpha$ there is a set $A \in \K$ so that the $\Gamma$-rank of $B_A$ is greater than $\alpha$. 

In Section~\ref{sec:2}, we will transform $(Y_A, R_A, B_A)$ to a mixing TDS having CPE $(X_A,T_A)$, 
so that $B_A$ leads to the entropy set of $T_A$ in a natural way, i.e., $\Gamma ^{\infty}  (B_A) = Y_A^2$ if and only $\Gamma ^{\infty}  ((E(X_A,T_A)) = X_A^2$, yielding that the $\Gamma$-rank of $B_A$ is the same as the entropy rank of $(X_A,T_A)$ whenever $A \in \K$ is countable. We also show in this section that $X_A$ is a Cantor space. Finally, putting it all together we will arrive at our main result.

We need two fundamental dynamical systems, $(X,T)$ and a subsystem $(Y,R)$ ($R$ is simply the restriction of $T$ to $Y$), to begin our construction.
Below we establish their existence. 

\begin{lemma}\label{startup}
There exists a  mixing TDS $(X,T)$ and  a mixing subsystem $(Y,R)$ such that 
\begin{itemize}
\item $X$ and $Y$ are Cantor spaces.
    \item $(X,T)$ is UPE, i.e. $E(X,T) = X^2$,
    \item $E(Y,R)=\Delta_{Y}$, and
    \item $C$ is a subset of $fix(Y,R)$, the set of fixed points of $(Y,R)$. 
\end{itemize}
\end{lemma}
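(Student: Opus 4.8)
The plan is to take $(X,T)$ to be a full shift on a Cantor alphabet and to build $(Y,R)$ as a tightly controlled subshift inside it. Fix a Cantor space of symbols $C$ (e.g. $C=\{0,1\}^{\mathbb N}$) and set $X=C^{\mathbb Z}$, $T=\sigma$ the shift. Then $X$ is a Cantor space, $(X,\sigma)$ is mixing because it is a full shift, and each constant sequence $\bar c$ ($c\in C$) is a fixed point, which already supplies a copy of $C$ inside $\mathrm{fix}(X,\sigma)$. Also $(X,\sigma)$ is UPE: for $x,y\in X$ and basic clopen neighbourhoods of them, which restrict only finitely many coordinates, any arithmetic progression whose gap exceeds the size of those windows is a positive-density independence set for the pair of neighbourhoods, so $(x,y)\in E(X,\sigma)$; hence $E(X,\sigma)=X^2$ and $(X,\sigma)$ has UPE by Proposition~\ref{prop:characUPE}. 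So everything reduces to producing a closed $\sigma$-invariant $Y\subseteq C^{\mathbb Z}$ that is a Cantor space, is mixing, contains every $\bar c$, and satisfies $E(Y,\sigma)=\Delta_Y$; then $(X,T)=(C^{\mathbb Z},\sigma)$ and $(Y,R)=(Y,\sigma|_Y)$ do the job.

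I would split $E(Y,\sigma)=\Delta_Y$ into two easier halves. By Theorem~\ref{thm:basic}(1), $E(Y,\sigma)\subseteq\Delta_Y$ is equivalent to $h_{\text{top}}(Y,\sigma)=0$, so it is enough to build $Y$ of zero topological entropy; as $Y$ is zero-dimensional this means exactly that for every finite clopen partition $\mathcal P$ of the alphabet $C$ the finite-alphabet factor $Y_{\mathcal P}$ (collapse each letter to its $\mathcal P$-class) has zero entropy. For the reverse inclusion it is enough that $(Y,\sigma)$ carries an invariant probability measure of full support: for a fixed point $\bar c$ its own orbit shows $(\bar c,\bar c)\in E(Y,\sigma)$ with frequency $1$ in every neighbourhood, and for an arbitrary $y\in Y$ one combines closedness of $E(Y,\sigma)$ with the fact that the support of an invariant measure is the closure of the supports of its ergodic components, each of which produces diagonal IE-pairs via Birkhoff's theorem.

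For $Y$ itself I would use a mixing rank-one–type (cutting-and-stacking) construction over the Cantor alphabet. Fix block lengths $\ell_n\to\infty$ growing quickly, an increasing sequence of finite clopen partitions of $C$ whose representative points become dense in $C$, and form level-$n$ words by concatenating boundedly many copies of level-$(n-1)$ words separated by long monochromatic ``spacer'' runs $c^{s}$, where $c$ ranges over the level-$n$ representatives and $s$ is comparable to $\ell_{n-1}$; let $\xi\in C^{\mathbb Z}$ be a two-sided limit realizing every such configuration and put $Y=\overline{\{\sigma^k\xi:k\in\mathbb Z\}}$. With the parameters chosen suitably this yields: (a) arbitrarily long runs $c^{s}$ in $Y$ for every $c\in C$, so $\bar c\in Y$ and the required copy of $C$ lies in $\mathrm{fix}(Y,\sigma)$; (b) for any two words occurring in $Y$ a run of every sufficiently large length can be interpolated between them, which is precisely the mixing mechanism of such ``staircase'' systems; (c) since the spacers are long, a window of length $L$ meets only $o(L)$ runs, so for each finite clopen $\mathcal P$ the number of $\mathcal P$-names of length $L$ in $Y$ is at most (sub-exponentially many rank-one skeletons) times $k^{\,o(L)}$ with $k=|\mathcal P|$, hence sub-exponential, giving $h_{\text{top}}(Y_{\mathcal P})=0$; (d) empirical averages along $\xi$ converge to an invariant measure of full support. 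Then $Y$ is compact, totally disconnected and perfect (every word occurring in $\xi$ occurs in infinitely many distinct contexts), it is mixing by (b), satisfies $E(Y,\sigma)\subseteq\Delta_Y$ by (c), and $\Delta_Y\subseteq E(Y,\sigma)$ by (d); with (a) this proves the lemma.

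I expect the main obstacle to be exactly the clash between (b) and (c): mixing forces the language of $Y$ to be rich — every pattern reachable from every other with every large gap — whereas zero entropy for all finite clopen projections forces its word growth to stay sub-exponential, and moreover the spacer colours must sweep a dense subset of $C$ to manufacture a genuine Cantor set of fixed points without that uncountable labelling reintroducing entropy. The way out is to let all of the ``richness'' reside in long, essentially rigid monochromatic spacer runs, so that a length-$L$ window of $Y$ contains only $o(L)$ runs and is otherwise pinned down by boundedly much data; this forces a careful coupling between the growth of $\ell_n$, the (bounded or slowly growing) number of copies per level, and the rate at which the clopen partitions are refined, and verifying that the resulting system is genuinely topologically mixing is the technical core of the argument.
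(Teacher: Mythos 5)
Your reduction is sound as far as it goes: the full shift argument for UPE, the equivalence $E(Y,\sigma)\subseteq\Delta_Y \iff h_{\mathrm{top}}(Y,\sigma)=0$ via Theorem~\ref{thm:basic}(1), and the use of a fully supported invariant measure to get $\Delta_Y\subseteq E(Y,\sigma)$ are all correct. But the lemma then rests entirely on the existence of the subshift $Y$, and that is exactly the part you do not establish. You need a single closed shift-invariant $Y\subseteq C^{\mathbb Z}$ that is simultaneously topologically mixing, of zero entropy for \emph{every} finite clopen collapse of the uncountable alphabet, perfect, and containing \emph{all} constant sequences $\bar c$, $c\in C$. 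Your cutting-and-stacking sketch leaves the compatibility of (b) and (c) unverified --- you say so yourself --- and the tension is real: topological mixing of a subshift demands that every pair of admissible words be joinable across every sufficiently large gap, while your entropy bound requires the number of level-$n$ skeletons (which now involves a growing supply of spacer colours sweeping a dense subset of $C$) to stay subexponential in $\ell_n$. Nothing in the proposal pins down parameters for which both hold, so as written this is a plan for a proof, not a proof.

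The paper avoids this construction entirely with a trick you are missing: the countable product. It cites known results for the existence of a mixing \emph{binary} subshift $(Y_0,R_0)$ of the full shift $(X_0,T_0)$ on $\{0,1\}$ with zero entropy, two fixed points, and a dense set of periodic points (the dense periodic points give $\Delta_{Y_0}\subseteq E(Y_0,R_0)$, playing the role of your invariant measure). Then $(X,T)$ and $(Y,R)$ are taken to be the countable products $(X_0^{\mathbb N},T_0^{\mathbb N})$ and $(Y_0^{\mathbb N},R_0^{\mathbb N})$. Products preserve mixing; Theorem~\ref{thm:basic}(2) gives $E(X,T)=X^2$ and $E(Y,R)=\prod\Delta_{Y_0}=\Delta_Y$; both spaces are Cantor; and --- the key point --- the two fixed points of $(Y_0,R_0)$ produce a set of fixed points of $(Y,R)$ homeomorphic to $\{0,1\}^{\mathbb N}$, i.e.\ a Cantor set, which is then identified with $C$. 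So there is no need to engineer a Cantor set of fixed points inside one subshift over an uncountable alphabet: two fixed points plus a countable product suffice. If you want to salvage your approach, either carry out the staircase construction in full (nontrivial) or, more economically, replace it by this citation-plus-product argument.
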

\begin{proof}
Let $(X_{0},T_{0})$ be the full shift on $\{0,1\}$. Hence, it is mixing and $E(X_{0},T_{0})=X_{0}^{2}$. By \cite{tworesults,weiss1971topological} there is a mixing subshift $(Y_0,R_0)$ of $(X_{0},T_{0})$ with entropy zero, two fixed points and a dense set of periodic points. As $(Y_0,R_0)$ has zero entropy,  by Theorem \ref{thm:basic} (1), we have that  $E(Y_{0},R_{0}) \subseteq \Delta_{Y_{0}}$. Furthermore, using the density of periodic points, one can conclude that
$E(Y_{0},R_{0})=\Delta_{Y_{0}}$. Let $(X,T)$ and $(Y,R)$ be the  countable products of $(X_0,T_0)$ and $(Y_{0},R_{0})$, respectively. As the product of mixing systems is mixing, we have that $(X,T)$ and $(Y,R)$ are mixing. Moreover, $X$ and $Y$ are homeomorphic to the Cantor set. Using Theorem \ref{thm:basic} (2) we have that $(X,T)$ has UPE and $E(Y,R)=\Delta _Y$.  Finally, since $(Y_0,R_0)$ has two fixed points, $(Y,R)$ has a perfect set consisting of fixed points. We identify this set with the standard Cantor set $C$.
\end{proof}

For the rest of this section, we work with $(X,T)$ and $(Y,R)$ of Lemma~\ref{startup}.

\subsection{Topological Construction}\label{topcon}

{We now give a sequence of auxiliary results which yields the main result of this section, Proposition \ref{highrank}. This is the most technical subsection of the
paper.}

Let $(Y,R)$ be the TDS of Lemma~\ref{startup}.
If $x, y $ are in the Cantor set $C$, then $x < y$ means that $x< y$ in the ordering on $C$ induced by being a subset of $[0,1]$. 

We first introduce terminology and auxiliary functions. 

Let $A$ be a compact subset $(0,1)$. As before, we say an interval $I\subseteq (0,1)$ is contiguous to $A$ if $I$ is a maximal connected component of $(0,1) \setminus A$.  In particular, a contiguous interval has the form  $(a,b)$ and at least one of $a, b \in A$. Moreover, if neither $a$ nor $b$ belongs to $\{0,1\}$, then $a, b \in A$. We let $\C(A)$ be the set of intervals contiguous to $A$. For an interval $I \subseteq (0,1)$, we denote the left endpoint and right endpoint of $I$ with $\ell(I)$ and $r(I)$ respectively. We let $\overset{\multimapdotinv}{I} = I \cup \{\ell(I)\}$ if $\ell(I) \in A$ and otherwise $\overset{\multimapdotinv}{I} = I$. For the sake of brevity, we use $I[$ and $]I$ to denote the right component and the left component of $(0,1) \setminus I$, respectively.
We let $L(A)$ be the set of points in $A$ which are left endpoints of intervals contiguous to $A$. In particular, $0 \notin L(A)$ and  $\max A \in L(A)$.

We start with various notions essential for our definition of $B_A$.

Let $A \in \K$. For each $\alpha$ less than $|A|_{CB}$, the Cantor-Bendixson rank of $A$, we define a Borel function $m^{\alpha}_A:A \rightarrow A$ in the following way: If $a \in A^{\alpha} \setminus A^{\alpha +1}$, then $m^{\alpha}_A(a)$ is the right endpoint of interval contiguous to $A^{\alpha+1}$ containing $a$. Note that $m^{\alpha}_A(a)$ is the closest point of $A^{\alpha+1}$ to the right of $a$ or $1$. Everywhere else $m^{\alpha}_A$ is the identity function. Finally, let $m_A = \sup_{\alpha < |A|_{CB}} m^{\alpha}_A$. (If $A$ has rank zero, i.e. $A$ is perfect, then we simply take $m_A$ to be the identity.) Note that this function is well-defined and $m_A$ is identity on $A^{\infty}$. 

Let $A \in \K$, $I \in \C(A)$, and $b=r(I)$. 
We define the function $n _{I,A}:  [b,1] \cap A \rightarrow [b,1] \cap A$ by
\[n_{I,A} (c) = \sup_{\alpha : A^{\alpha} \cap [b,c] \neq \emptyset } \inf \left (  A^{\alpha} \cap [b,c] \right ). 
\]
Finally, define $t_{I,A}: [b,1]\cap L(A) \rightarrow A$ by $t_{I,A} (c) = m_A(n_{I,A} (c) )$. Note that indeed $t_{I,A} (c) \in A$, and $b \le t_{I,A} (c)$. For the sake of brevity, when the $A$ is fixed in the statement of the result, we will abuse notation and write $t_{I}$ instead of $t_{I,A}$,  suppressing $A$. Note that when $A\in\K$ is fixed then for every point $a\in L(A)$ there is just one interval $I\in \C(A)$ with $a=\ell (I)$. In this case we define $t_a=t_I$.


\begin{proposition}\label{entuallysame}
Let $A \in \K$, $\alpha$ be a countable ordinal, $I \in \C(A^{\alpha})$, and $J, J' \in \C(A)$ with $J, J' \subseteq I$. Then, $t_J|_{I[} = t_{J'}|_{I[}$.
\end{proposition}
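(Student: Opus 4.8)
The plan is to reduce the claim to a statement purely about the auxiliary function $n_{\cdot,A}$: since $t_{K,A} = m_A \circ n_{K,A}$ for every interval $K$ contiguous to $A$, it suffices to show $n_{J,A}(c) = n_{J',A}(c)$ for every $c \in I[\,\cap L(A)$, and then apply $m_A$ to both sides. First I would dispose of the degenerate case: if $r(I) = 1$ (in particular if $I = (0,1)$) then $I[\,\cap L(A) = \emptyset$ and there is nothing to prove, so I assume $r(I) < 1$; then $r(I)$, being the right endpoint of an interval contiguous to $A^{\alpha}$ and different from $1$, lies in $A^{\alpha}$. Fix $c \in I[\,\cap L(A)$, so $r(I) \le c < 1$, and write $b = r(J)$, $b' = r(J')$. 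Because $J$ and $J'$ are contiguous to $A$ and contained in $I = (\ell(I), r(I))$, I have $\ell(I) \le \ell(J) < b \le r(I)$ and likewise $\ell(I) < b' \le r(I)$; in particular $[b, r(I)) \subseteq I$ and $[b', r(I)) \subseteq I$, while $c \ge r(I) \ge b, b'$, so $n_{J,A}(c)$ and $n_{J',A}(c)$ are both defined.

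The core computation is to show that $n_{J,A}(c)$ does not depend on $J$. I would split the interval $[b, c] = [b, r(I)) \cup [r(I), c]$ in the definition
\[
n_{J,A}(c) = \sup_{\beta \,:\, A^{\beta} \cap [b,c] \neq \emptyset} \inf\bigl( A^{\beta} \cap [b,c]\bigr)
\]
and treat two ranges of $\beta$. For $\beta \ge \alpha$ the derivative $A^{\beta}$ is contained in $A^{\alpha}$, which is disjoint from $I \supseteq [b, r(I))$; hence $A^{\beta} \cap [b, r(I)) = \emptyset$ and $A^{\beta} \cap [b,c] = A^{\beta} \cap [r(I), c]$, an expression free of $J$. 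For $\beta < \alpha$ one has instead $A^{\alpha} \subseteq A^{\beta}$, so $A^{\alpha} \cap [b,c] \subseteq A^{\beta} \cap [b,c]$ and therefore $\inf\bigl(A^{\beta} \cap [b,c]\bigr) \le \inf\bigl(A^{\alpha} \cap [b,c]\bigr)$; since $r(I) \in A^{\alpha} \cap [b,c]$ the latter set is nonempty, and by the previous sentence applied with $\beta = \alpha$ its infimum equals $\inf\bigl(A^{\alpha} \cap [r(I), c]\bigr)$, which is one of the terms already occurring in the supremum. Consequently no $\beta < \alpha$ contributes anything beyond the $\beta = \alpha$ term, and
\[
n_{J,A}(c) = \sup_{\beta \ge \alpha \,:\, A^{\beta} \cap [r(I), c] \neq \emptyset} \inf\bigl( A^{\beta} \cap [r(I), c] \bigr),
\]
whose right-hand side involves only $A$, $\alpha$, $r(I)$ and $c$. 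The identical formula holds for $J'$, so $n_{J,A}(c) = n_{J',A}(c)$.

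Applying $m_A$ then gives $t_J(c) = m_A\bigl(n_{J,A}(c)\bigr) = m_A\bigl(n_{J',A}(c)\bigr) = t_{J'}(c)$ for every $c \in I[\,\cap L(A)$, which is exactly $t_J|_{I[} = t_{J'}|_{I[}$. I do not expect a conceptual obstacle here; the only step requiring real care — and the likeliest source of a slip — is the endpoint bookkeeping: checking $b > \ell(I)$ so that $[b, r(I))$ genuinely sits inside the \emph{open} interval $I$, checking $r(I) \in A^{\alpha}$ so that every $A^{\beta}$ with $\beta \le \alpha$ meets $[b,c]$ (which is what makes the $\beta = \alpha$ term available in the supremum), and keeping straight that $\beta \mapsto A^{\beta}$ is decreasing while $S \mapsto \inf S$ is inclusion-reversing.
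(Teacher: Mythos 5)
Your proof is correct and follows the same route as the paper: the paper's own argument is a two-line version of exactly this, observing that $r(I)\in A^{\alpha}$ and $A^{\alpha}\cap[b,c]\neq\emptyset$ force $n_{J,A}(c)=n_{J',A}(c)$, and then applying $m_A$. You have simply filled in the details the paper leaves implicit (the split of the supremum at $\beta=\alpha$ and the endpoint checks), all of which are handled correctly.
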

\begin{proof}
Let $c \in L(A) \cap I[$. Then $b=r(I) \in A^{\alpha}$ and $A^{\alpha} \cap [b,c] \neq \emptyset$. Hence, $n_{J,A}(c) = n_{J',A} (c)$, implying that $t_J (c) =m_A(n_{J,A}(c))= m_A(n_{J',A}(c))=t_{J'} (c)$.
\end{proof}
\begin{proposition}\label{perfectbarrier}
Let $A \in \K$ with $A^{\infty} \neq \emptyset$, $I \in \C(A^{\infty})$ and $J \in \C(A)$ with $J \subseteq I$. Then, $t_J(c) = r(I)$ for all $c \in L(A) \cap I[$.
\end{proposition}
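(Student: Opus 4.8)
The plan is to unwind the three-layered definition $t_J = t_{J,A} = m_A\circ n_{J,A}$ and to exploit that the right endpoint $b := r(I)$ of a contiguous interval of the perfect kernel is \emph{already} a point of $A^\infty$, i.e.\ a point left fixed by every Cantor--Bendixson derivative and by $m_A$. So the whole statement should come down to the bookkeeping fact that, inside $I$, the kernel $A^\infty$ ``blocks'' everything lying to the right of $J$, so neither $n_{J,A}$ nor $m_A$ can move a point of $I[$ past $r(I)$.

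Concretely, I would fix $c\in L(A)\cap I[$ and first record two elementary observations. Since $I[$ is the right component of $(0,1)\setminus I$, we have $b\le c<1$; as $b=r(I)\in(0,1)$ is a boundary point of the open set $(0,1)\setminus A^\infty$, this forces $b\in A^\infty$, and hence $m_A(b)=b$ because $m_A$ is the identity on $A^\infty$. Writing $b':=r(J)$, the inclusion of the nonempty open interval $J$ into $I=(\ell(I),b)$ gives $\ell(I)<b'\le b$, so $[b',b)\subseteq I$ and therefore $[b',b)\cap A^\infty=\emptyset$.

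Next I would evaluate $n_{J,A}(c)=\sup_{\alpha\,:\,A^{\alpha}\cap[b',c]\neq\emptyset}\inf\big(A^{\alpha}\cap[b',c]\big)$. Since $A^\infty\subseteq A^\alpha$ for every ordinal $\alpha$ and $b'\le b\le c$, the point $b$ belongs to $A^{\alpha}\cap[b',c]$ for every $\alpha$; thus each infimum in the supremum is $\le b$. On the other hand, for $\alpha=|A|_{CB}$ the set $A^{\alpha}\cap[b',c]$ is exactly $A^{\infty}\cap[b',c]$, which contains $b$ but meets $[b',b)$ in the empty set, so its infimum equals $b$. Hence $n_{J,A}(c)=b$, and applying $m_A$ yields $t_J(c)=m_A(n_{J,A}(c))=m_A(b)=b=r(I)$.

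I do not expect a genuine obstacle; the argument is a short chase through the definitions once the blocking observation above is isolated. The only point deserving a moment's care is the verification that $r(I)\in A^\infty$ — this is what makes $m_A$ fix it and what puts $b$ into $A^\infty\cap[b',c]$ — and here one must invoke the hypothesis $c\in I[$, which forces $r(I)<1$ and hence places $r(I)$ strictly inside $(0,1)$ on the boundary of $(0,1)\setminus A^\infty$.
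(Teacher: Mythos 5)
Your proposal is correct and follows the same route as the paper's proof, which simply asserts $n_{J,A}(c)=r(I)$ ``by the definition of $n_{J,A}$'' and $m_A(r(I))=r(I)$; you have merely filled in the details (that $r(I)\in A^{\infty}$, that $A^{\infty}\cap[r(J),r(I))=\emptyset$, and that $m_A$ fixes $A^{\infty}$) that the paper leaves implicit. No discrepancy.
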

\begin{proof}
Let $c \in L(A) \cap I[$. By the definition of $n_{J,A}$, we have that $n_{J,A} (c) = r(I)$. Moreover, $m_A(r(I)) = r(I)$. Hence, $t_J(c) = r(I)$.
\end{proof}
\begin{proposition}\label{fixedhigherlevel}
Let $A \in \K$ with $A^{\alpha} \neq \emptyset$ and  $I \in \C(A^{\alpha +1 })$. There is $f \in Y$ such that for all $J \in \C(A^{\alpha})$ and all $a \in \overset{\multimapdotinv}{J} \cap L(A)$, we have $t_a|_{J[ \cap I} = f$. 
\end{proposition}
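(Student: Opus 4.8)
The plan is to take $f$ to be the right endpoint $r(I)$ of $I$, viewed as a point of the Cantor set $C$: indeed $r(I)$ is either a point of $A^{\alpha+1}\subseteq A\subseteq C$ or equals $1\in C$, and since $C\subseteq fix(Y,R)\subseteq Y$ this gives $f\in Y$. Everything then reduces to proving that $t_a(c)=r(I)$ for every $J\in\C(A^\alpha)$, every $a\in\overset{\multimapdotinv}{J}\cap L(A)$, and every $c$ in $J[\,\cap I$ (intersected, tacitly, with the domain $[r(I_a),1]\cap L(A)$ of $t_a$), where $I_a\in\C(A)$ is the contiguous interval with $\ell(I_a)=a$ and $b:=r(I_a)$; one first checks $I_a\subseteq J$, so that $b\le r(J)$.

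First I would dispose of the dependence on $c$. If $J[\,\cap I=\emptyset$ there is nothing to prove, so assume it is nonempty; then, since $I$ lies weakly to the right of $J$, either $J\subseteq I$ and $J[\,\cap I=[r(J),r(I))$, or $J$ lies strictly to the left of $I$ and $J[\,\cap I=I$. In either situation, as $c$ increases over $J[\,\cap I$ the interval $[b,c]$ grows only by points lying in $I$, and --- because $I\cap A^{\alpha+1}=\emptyset$ --- all such new points have Cantor--Bendixson level $\le\alpha$. Using the compactness fact that there is a largest ordinal $\gamma_0=\gamma_0(b,c)$ with $A^{\gamma_0}\cap[b,c]\neq\emptyset$ and that $n_{I_a,A}(c)=\min\bigl(A^{\gamma_0}\cap[b,c]\bigr)$ (monotonicity of $\inf(A^{\gamma}\cap[b,c])$ in $\gamma$), one verifies that $\gamma_0(b,c)$, and hence $n_{I_a,A}(c)$, does not depend on $c\in J[\,\cap I$. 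Consequently $t_a=m_A\circ n_{I_a,A}$ is constant on $J[\,\cap I$, and it suffices to evaluate it at one convenient $c$.

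Next I would compute the constant in two cases. If $J\subseteq I$, then $r(J)\in A^\alpha$ and $\ell(I)<r(J)<r(I)$, so $r(J)\in I$, hence $r(J)\notin A^{\alpha+1}$; since $r(J)$ is the right endpoint of a contiguous interval of $A^\alpha$ it is isolated from the left in $A^\alpha$, and one gets $n_{I_a,A}(c)=r(J)$ for all relevant $c$. As $r(J)\in A^\alpha\setminus A^{\alpha+1}$ and the contiguous interval of $A^{\alpha+1}$ containing $r(J)$ is precisely $I$, the definition of $m_A$ yields $m_A(r(J))=r(I)$, so $t_a\equiv r(I)$. If instead $J$ lies strictly to the left of $I$, write $J\subseteq I'$ with $I'\in\C(A^{\alpha+1})$, $I'$ to the left of $I$; here $J[\,\cap I=I$, and by the reduction above $n_{I_a,A}(c)$ equals the leftmost point $p$ of the highest nonempty $A^{\gamma_0}\cap[\,r(I'),\ell(I)\,]$, with $\gamma_0\ge\alpha+1$, and $p$ has Cantor--Bendixson level exactly $\gamma_0$. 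One must then show $m_A(p)=r(I)$: the contiguous interval of $A^{\gamma_0+1}$ through $p$ contains $p$, sweeps across $I$ (indeed $A^{\gamma_0+1}\cap[\,r(I'),r(I)\,)=\emptyset$ by maximality of $\gamma_0$ together with $A^{\gamma_0+1}\cap I\subseteq A^{\alpha+1}\cap I=\emptyset$), and terminates exactly at $r(I)$ because $r(I)$ persists to level $\gamma_0+1$; this last fact should be extracted from the way the endpoints of the gaps $I'$ and $I$ of $A^{\alpha+1}$ "seal off" the corridor $[\,r(I'),\ell(I)\,]$ under successive Cantor--Bendixson derivatives. Either way $t_a\equiv r(I)=f$, and $f$ depends only on $I$, as required.

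The delicate point, and where I expect the real work to lie, is this last case: showing that for $J$ to the left of $I$ the point $p=n_{I_a,A}(c)$ is nonetheless carried by $m_A$ exactly to $r(I)$ and not past it. This is a purely order-theoretic Cantor--Bendixson bookkeeping statement about how the "highest-order residue" of $A$ between the two gaps $I'$ and $I$ of $A^{\alpha+1}$ sits relative to $r(I)$; Propositions~\ref{entuallysame} and \ref{perfectbarrier} are the analogous bookkeeping for intervals contiguous to $A^\alpha$ and to $A^\infty$ respectively, and the successor version needed here should follow by the same transfinite induction on the Cantor--Bendixson rank, exploiting that $A\in\K$ lives in the Cantor set $C$ (so every $A^\gamma$ is nowhere dense) in order to control the corridor.
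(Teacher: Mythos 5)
Your choice $f=r(I)$ and your treatment of the case $J\subseteq I$ are correct and essentially identical to the paper's argument: there $[r(I_a),c]\subseteq I$ meets $A^{\alpha}$ (at $r(J)$) but not $A^{\alpha+1}$, so $n_{I_a,A}(c)\in A^{\alpha}\cap I$, and $m_A$ carries any point of $(A^{\alpha}\setminus A^{\alpha+1})\cap I$ to $r(I)$. The problem is your second case, where $J$ lies strictly to the left of $I$. The ``delicate point'' you flag there is not merely delicate --- the claim is false, so no Cantor--Bendixson bookkeeping can close it. Concretely, take $\alpha=0$ and $A\in\K$ with $A'=\{q_1<q_2<\cdots\}\cup\{q_\infty\}$, $q_n\nearrow q_\infty$, so that $A''=\{q_\infty\}$ (say each $q_n$ is the limit of an increasing sequence of isolated points of $A$). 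Let $I=(q_2,q_3)$, let $J\in\C(A)$ with $J\subseteq(q_1,q_2)$, $a=\ell(J)$, and $c\in L(A)\cap I$. Then $[r(J),c]$ meets $A'$ exactly in $\{q_2\}$ and misses $A''$, so $\gamma_0=1$ and $n_{J,A}(c)=q_2$; since $q_2\in A'\setminus A''$ and the interval contiguous to $A''$ containing $q_2$ is $(0,q_\infty)$, you get $t_a(c)=m_A(q_2)=q_\infty\neq q_3=r(I)$. In particular your key assertion that ``$r(I)$ persists to level $\gamma_0+1$'' fails here: $r(I)=q_3\notin A''=A^{\gamma_0+1}$.

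What is really going on is that the proposition, as literally quantified over all $J\in\C(A^{\alpha})$, is false; the paper's own proof silently restricts to $J\subseteq I$ (its line ``$J'\subseteq J\subseteq I$'' has no justification otherwise), and that is the only case in which the proposition is ever invoked --- in Proposition~\ref{GammaEalpha} the collection $G$ consists solely of $\overset{\multimapdotinv}{J}$ with $J\in\C(A^{\alpha})$ and $J\subseteq I$. So the correct move in your second case is not to prove it but to discard it: read the statement with the additional hypothesis $J\subseteq I$. Under that reading your argument is complete and agrees with the paper's.
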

\begin{proof}
We will show that $f = r(I)$ satisfies the proposition. To this end, let $a \in \overset{\multimapdotinv}{J} \cap L(A)$ and $c \in L(A) \cap J[ \cap I$. Let $J' \in \C(A)$ such that $a = \ell (J')$. Then, $J'\subseteq J \subseteq I$.  Consider the interval $[r(J'),c]$. $A^{\alpha +1} \cap [r(J'),c] = \emptyset$ and $A^{\alpha} \cap [r(J'),c]  \neq \emptyset$ as $r(J) \in  A^{\alpha} \cap [r(J'),c]$.  Hence, we have that $n_{J',A}(c) \in A^{\alpha} \cap I$.  Now by the definition of $m_A$, we have that $m_A(n_{J',A}(c)) = r(I)$. Hence, $t_a (c) = t_{J',A}(c)= m_A(n_{J',A}(c)) = r(I) =f $, completing the proof.  \end{proof}

For every $A \in \Kcal$ we define $Y_A = \prod_{a \in L(A)} Y$ and the mapping $R_A$ as the product map $\prod_{a \in L(A)} R$.
We are now in a position to define $B_A$. 
To facilitate the notation, for $y, y' \in L(A)$ and any set $I$ we use $y \stackrel  {I} {\asymp} y' $ to denote that $y (a) = y'(a)$ for all $a \notin I$.

Let $a\in L(A)$ with $I \in \C(A)$ such that $a= \ell (I)$. We define
\[B_{I,A} = \{ (y, y')\in  Y_A^2:  y \stackrel  {a} {\asymp} y', \{y(a),y'(a)\} \in R \ \ 
 \& \ \ y|_{I[} = t_a|_{I[} \}.
\]

and 
\[D_{I,A} = \{ (y, y')\in Y_A^2:  y \stackrel  {a} {\asymp} y' \
 \& \ \ y|_{I[} = t_a|_{I[} \}.
\] 
 For the sake of brevity, when we are working with a fixed $A$, we write $B_a$ and $D_a$ instead of $B_{I,A}$ and $D_{I,A}$, respectively, whenever $a = \ell (I)$, $I \in \C(A)$.
 
Finally,  we let 
\[B_A =  \bigcup_{a  \in L(A)}  (B_a  \cup \Delta _{Y_A})   \ \ \ \ \ \& \ \ \ \  \ D_A = \bigcup_{a  \in L(A)} D_{a}.\]
where $\Delta _{Y_A}$ is the diagonal of $Y_A$. Note that $\Delta _{Y_A}\subseteq D_{\max (A)}\subseteq D_A$.

\begin{proposition}\label{closed} Let $A \in \Kcal$. We have that
$B_A$ and $D_A$  are closed.
\end{proposition}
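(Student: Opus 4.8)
The plan is to show that the complement of each of $B_A$ and $D_A$ is open in $Y_A^2$ by producing, for every pair $(y,y')$ not in the set, a basic open neighborhood that misses the set entirely. Since $L(A)$ may be infinite, the union defining $B_A$ (resp.\ $D_A$) is a possibly infinite union of closed sets, so the real content is to rule out a sequence $(y_n, y_n') \in B_{a_n}$ (resp.\ $D_{a_n}$) converging to a point outside $B_A$ (resp.\ $D_A$); equivalently, to show the union is locally finite near any point not in the closure's "obvious" part, or that any convergent sequence with $a_n$ eventually constant lands in the corresponding closed piece, while the case $a_n \to a_\infty$ forces the limit into $\Delta_{Y_A}$.

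First I would record the two structural facts that make this work. For a fixed $a = \ell(I) \in L(A)$, the set $B_a$ is closed: the condition $y \stackrel{a}{\asymp} y'$ is a closed condition (equality on a fixed product of coordinates), $\{y(a), y'(a)\} \in R$ is closed because $R$ is a closed subsystem (so $R \subseteq Y^2$ is closed), and $y|_{I[} = t_a|_{I[}$ pins finitely-or-infinitely many coordinates of $y$ to fixed values, again closed. The same reasoning, minus the $R$-condition, shows $D_a$ is closed, and $\Delta_{Y_A}$ is closed. So each summand is closed; the issue is only the union.

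Next I would handle the union. Suppose $(y_n, y_n') \to (y,y')$ with $(y_n, y_n') \in B_{a_n} \cup \Delta_{Y_A}$. If infinitely many $n$ have $(y_n,y_n') \in \Delta_{Y_A}$, the limit is in $\Delta_{Y_A} \subseteq B_A$. Otherwise we may assume $(y_n, y_n') \in B_{a_n}$ for all $n$, with each $a_n \in L(A)$. Since $L(A) \subseteq A$ and $A$ is compact, pass to a subsequence so $a_n \to a_\infty \in A$. Case one: $a_n = a_\infty$ eventually; then $a_\infty \in L(A)$ and all but finitely many $(y_n, y_n')$ lie in the closed set $B_{a_\infty}$, so $(y,y') \in B_{a_\infty} \subseteq B_A$. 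Case two: the $a_n$ are infinitely-often distinct from $a_\infty$. Here I would argue $(y,y') \in \Delta_{Y_A}$, as follows. For any coordinate $a \neq a_\infty$, eventually $a_n \neq a$ as well (the $a_n$ are eventually outside the fixed neighborhood of $a$ that excludes $a_\infty$... — more carefully: if $a_n = a$ infinitely often then $a$ is a limit point of itself, impossible, so $a_n = a$ only finitely often), hence eventually the defining relation $y_n \stackrel{a_n}{\asymp} y_n'$ forces $y_n(a) = y_n'(a)$, giving $y(a) = y'(a)$. For the coordinate $a_\infty$ itself: if $a_\infty \notin L(A)$ there is nothing to check beyond the other coordinates and we already have $y = y'$; if $a_\infty \in L(A)$, then since $a_n \neq a_\infty$ eventually we again get $y_n(a_\infty) = y_n'(a_\infty)$ eventually, so $y(a_\infty) = y'(a_\infty)$. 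In all sub-cases $(y,y') \in \Delta_{Y_A}$. This proves $B_A$ is closed. The argument for $D_A$ is identical, dropping the $R$-condition throughout and using that $\Delta_{Y_A} \subseteq D_{\max A} \subseteq D_A$ to absorb the diagonal limit.

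The main obstacle I anticipate is the bookkeeping in case two — specifically confirming that when $a_n \to a_\infty$ with $a_n$ not eventually constant, every coordinate is forced into agreement in the limit, including the limiting coordinate $a_\infty$ and any coordinate pinned by the $t_{a_n}|_{I_n[}$ conditions (which one should check does not obstruct convergence to the diagonal, since those conditions constrain $y_n$ on $I_n[ \ni a_\infty$-side coordinates but impose nothing on $y_n$ versus $y_n'$). A clean way to avoid delicate limiting arguments altogether would be to instead show the family $\{B_a : a \in L(A)\}$ is \emph{locally finite}: around any point $p = (y,y') \notin \Delta_{Y_A}$, there is a coordinate $a^*$ with $y(a^*) \neq y'(a^*)$, and a basic neighborhood of $p$ controlling that coordinate meets only those $B_a$ with $a = a^*$ or with $y|_{I_a[}$ consistent with $t_a$ — and one checks only finitely many (in fact at most one, namely $a = a^*$) survive; near points of $\Delta_{Y_A}$ the set $B_A$ trivially contains a neighborhood's worth of diagonal, so closedness is immediate there. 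I would present whichever of these two routes turns out shorter once the details of how $t_a$ pins coordinates are laid out, likely the locally-finite one.
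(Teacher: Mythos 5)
Your proof is correct and takes essentially the same route as the paper's: each $B_a$ is closed, and a limit point of the union either disagrees at exactly one coordinate of $L(A)$ (hence is a limit point of a single closed $B_a$) or is forced onto the diagonal --- your case analysis on whether the indices $a_n$ stabilize is just the sequential version of the paper's observation that a non-diagonal limit point can meet only one $D_b$ nearby. One minor slip worth fixing: $R$ is the continuous map $T|_Y$, not a closed subset of $Y^2$, so the condition $\{y(a),y'(a)\}\in R$ is closed because the graph of a continuous map on a compact space is closed, which is what the paper means by ``continuity of $R$''.
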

\begin{proof}
 We prove that $B_A$ is closed. The proof for $D_A$ is analogous. First observe that from continuity of $R$ it is easy to verify that $B_a$ is closed for each $a \in L(A)$. Suppose $(y,y')$ is a limit-point of $B_A$. If $y=y'$, then $(y,y')\in \Delta _{Y_A} \subseteq B_A$. If there are $a_1 \neq a_2$ in $L(A)$ with $y(a_i)\neq y'(a_i)$, $i=1,2$, then it is easy to find an open set in $Y_A^2$ containing $(y,y')$ which doesn't intersect $B_A$. Hence, we are in the case that there is exactly one $a \in L(A)$ such that $y(a) \neq y'(a)$. Note that for sufficiently fine neighborhood of $(y,y')$ has the property that it has empty intersection with $D_b$, $b\neq a$. Hence, we have that $(y,y')$ is a limit-point of $B_a$. As $B_a$ is closed, we have that $(y,y') \in B_a \subseteq B_A$, completing the proof. 
\end{proof}
\begin{remark}
\label{ignoreB}
As $(Y,R)$ is transitive, we have that $B_{A}\subseteq D_{A}\subseteq \Gamma(B_{A})\subseteq \Gamma(D_A)$, implying that $\Gamma ^{\infty} (B_{A})=\Gamma ^{\infty}(D_A)$. 
\end{remark}

Now we want to show that $A$ is countable if and only if there is a countable ordinal $\alpha_A$ with $\Gamma^{\alpha_A}(D_A) = Y_A ^2$. \\

We introduce further notation to facilitate our proofs. For $M \subseteq Y_A^2$ and $I$ any set, we let
\[\stackrel {I}{\Box}M =  \{ (w,w') \in Y_A^2: \exists (y, y' )\in M \text{ such that } w \stackrel  {I} {\asymp} y, w' \stackrel  {I} {\asymp}  y' \}.
\]

\begin{remark}
\label{rem:boxbasic}
We note that $ \stackrel {I}{\Box}\stackrel {I}{\Box}M =\stackrel {I}{\Box}M$, $ \stackrel {I}{\Box}\stackrel {J}{\Box}M \subseteq \stackrel {I \cup J}{\Box}M$, and $\stackrel {I}{\Box}M$ is closed provided that $M$ is closed.
Moreover, $\stackrel {I}{\Box} (M_1 \cup M_2)  =\stackrel {I}{\Box}M_1 \cup \stackrel {I}{\Box}M_2$.
\end{remark}

The following lemma is a crucial step for proving that  $\Gamma^{\infty}(A) = Y_A^2$ for countable $A$.

\begin{proposition}\label{Enlarge}
Let $A \in \Kcal$, $I \subseteq (0,1)$  be an interval and $G$ be a countable collection of pairwise disjoint subintervals of $I$ such that $I \cap L(A) \subseteq \cup G$. Furthermore, let $f \in Y$, $t: I[ \cap L(A)  \rightarrow Y$ and $\{H_J\}_{J\in G}$ be a family of closed subsets of $Y_A^2$, such that for every $J\in G$ we have the following properties.

\begin{enumerate}
    \item For all $(y,y') \in H_J$, we have that $y \stackrel  {J} {\asymp}  y'$. 
    \item For all $(y,y') \in H_J$,  $y|_{I \cap J[} =f$, and $y|_{I[} =t$.
     \item For all $w \in Y_A$, there is $(y,y') \in H_J$ such that $w|_{]J} = y|_{]J} $.
    \item $\stackrel {J}{\Box}{H_J}= H_J$.
 
\end{enumerate}
Then,  letting $H= \bigcup_{J \in G}H_J$,
we have that $\stackrel {I}{\Box}H  \subseteq \Gamma (\overline{H})$. 
\end{proposition}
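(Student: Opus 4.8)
The plan is to deduce the containment from an explicit combinatorial construction of finite chains inside $H^+$. Fix $(w,w')\in \stackrel{I}{\Box}H$ and choose a witness: $J\in G$ and $(y,y')\in H_J$ with $w\stackrel{I}{\asymp}y$ and $w'\stackrel{I}{\asymp}y'$. Since $J\subseteq I$, property (1), i.e. $y\stackrel{J}{\asymp}y'$, forces $y\stackrel{I}{\asymp}y'$, hence $w\stackrel{I}{\asymp}w'$; and property (2) gives $w|_{I[}=y|_{I[}=t$, because $I[$ is disjoint from $I$. Now $\Gamma(\overline H)=\overline{(\overline H)^+\cup\Delta_{Y_A}}$ is closed and contains $H^+$, so it suffices to show that every basic neighbourhood of $(w,w')$ in $Y_A^2$ meets $H^+\cup\Delta_{Y_A}$; that is, given a finite $F\subseteq L(A)$, I must produce $(u,v)\in H^+$ with $u|_F=w|_F$ and $v|_F=w'|_F$. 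The case $w|_F=w'|_F$ (which in particular covers $F\cap I=\emptyset$, since $w=w'$ off $I$) is handled by the diagonal, so assume $w|_F\neq w'|_F$.

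Since $F\cap I\subseteq\bigcup G$ and $G$ is pairwise disjoint, only finitely many $J_1<J_2<\dots<J_k$ in $G$ meet $F$, and $F\cap I\subseteq\bigcup_{i=1}^k J_i$; moreover $F$ has no coordinate strictly between consecutive $J_i$'s nor in $J_k[\cap I$, coordinates of $F$ to the left of $I$ lie in $]J_1$, and off $I$ one has $w=w'$ with common value $t$ on $I[$. The mechanism I will use: from a configuration $u$ with $u|_{J_i[\cap I}=f$ and $u|_{I[}=t$ one may, using (3) to prescribe the $]J_i$-part and (4) to reset the block $J_i$, perform an $H_{J_i}$-step that changes only the coordinates in $J_i$, and arbitrarily; and by (2) \emph{both} endpoints of an $H_{J_i}$-step are pinned to $f$ on $J_i[\cap I$. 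That last fact is a one-way barrier: an $H_{J_j}$-step with $J_j<J_i$ overwrites the block $J_i$ with $f$. Hence the tail of the chain must apply $H_{J_1},H_{J_2},\dots,H_{J_k}$ in left-to-right order (so that, once block $J_i$ has been reset to agree with $w'$, no later step disturbs it), and to be able to \emph{start} that tail legally from a point whose $J_1[\cap I$-coordinates are all $f$ while still matching $w$ off $I$, one first runs a head $H_{J_k},H_{J_{k-1}},\dots,H_{J_2}$ that clears the blocks $J_k,\dots,J_2$ to $f$.

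Carrying this out: realize $u_0$ as the first coordinate of an element of $H_{J_k}$, choosing (via (3)) its values on $]J_k$ and (via (4)) its values on block $J_k$ so that $u_0=w$ on $F$ and $u_0=f$ on $I\setminus F$, while (2) automatically makes $u_0=f$ on $J_k[\cap I$ — which contains no coordinate of $F$ — and $u_0=t$ on $I[$; thus $u_0|_F=w|_F$. Applying $H_{J_k},\dots,H_{J_2}$ in turn (each step legal, since the head touches only the blocks $J_2,\dots,J_k$, so the coordinates equal to $f$ grow to contain $J_{j}[\cap I$ before each $H_{J_j}$-step) reaches a point $p$ with blocks $J_2,\dots,J_k$ equal to $f$; then $p|_{J_1[\cap I}=f$ and $p$ still equals $w$ on $F\cap\,]I$. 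Now apply $H_{J_1},\dots,H_{J_k}$, at stage $i$ resetting block $J_i$ to agree with $w'$ on $F\cap J_i$ (legal by (4)); let $q$ be the endpoint. By the barrier property every block $J_i$ survives the later stages, so $q=w'$ on each $F\cap J_i$, $q=w'$ ($=w$) on $F\cap\,]I$ inherited from $p$, and $q=t=w'$ on $F\cap I[$, whence $q|_F=w'|_F$. Then $(u_0,q)\in H^+$ lies in the prescribed neighbourhood, which finishes the argument.

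I expect the main obstacle to be the bookkeeping imposed by the one-way barrier of (2): fixing the correct order of the steps and, at each step, verifying that the current point genuinely lies in the first-coordinate projection of the relevant $H_{J_i}$ (which is exactly where the interplay of (1)–(4) is used, together with the elementary fact that $]J$, $J$, $J[\cap I$ and $I[$ partition $(0,1)$). The reduction to finite neighbourhoods and the remaining verifications are routine.
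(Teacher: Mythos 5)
Your proof is correct, and the engine is the same as the paper's: reduce to matching $(w,w')$ on a finite set of coordinates, observe that properties (1)--(4) yield a ``step lemma'' (from any point that is $f$ on $I\cap J[$ and $t$ on $I[$, an $H_J$-step can rewrite the block $J$ arbitrarily while fixing everything else), and then sweep through the finitely many relevant blocks in the order forced by the $f$-precondition on $I\cap J[$. The one genuine difference is how the general pair is handled. The paper first notes that $\stackrel {I}{\Box}H$ consists exactly of the pairs $(x,x')$ with $x\stackrel{I}{\asymp}x'$ and $x|_{I[}=t$, then invokes symmetry and transitivity to reduce to the case where one side is the constant $f$, and runs a single left-to-right sweep from the cleared configuration to an extension of $u$. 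You instead build one forward chain in $H^+$ directly: a right-to-left ``head'' through $H_{J_k},\dots,H_{J_2}$ that clears blocks to $f$, concatenated with a left-to-right ``tail'' through $H_{J_1},\dots,H_{J_k}$ that installs $w'$. This buys you something small but real: the paper's reduction tacitly uses that $H$ is symmetric (true for the $E_J$'s in the application, but not among hypotheses (1)--(4)), whereas your chain never reverses a step, so the proposition holds as stated without symmetry. One nitpick: your phrase that an $H_{J_j}$-step ``overwrites'' a block to its right is misleading --- property (2) is a \emph{precondition} (the step is only legal if those blocks are already $f$), not an overwrite --- but your construction respects the constraint correctly, so nothing breaks.
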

\begin{proof}
 If $I \cap L(A) = \emptyset$, then $\stackrel {I}{\Box}H = H$ and  as $H \subseteq \Gamma (\overline{H})$, we are done. 
 Hence, assume that $I \cap L(A)  \neq \emptyset$. 
 Note that all $(y,y') \in H$ have the property that $y \stackrel  {I} {\asymp}  y'$ and $y|_{I[} =t$. Considering this and Property 3, we obtain that \[\stackrel {I}{\Box}H =\{(x,x'):x\stackrel  {I} {\asymp} x' \text{and }x|_{I[} =t\}.\]   As such,  to conclude that $\stackrel {I}{\Box}H \subseteq  \Gamma (\overline{H})$, it will suffice to show that for all $v_0 \in Y_A$, $F \subseteq I \cap L(A)$ nonempty finite set, and  $u, u': F \rightarrow Y$,  there exists $(w,w') \in H^+$ such that  $w|_{]I} =v_0|_{]I}$ and $w,w'$ are extensions of $u,u'$, respectively. Moreover, as equivalence relations are symmetric and transitive, we may assume that $u'$ is a constant function taking value $f$.  Let $J_1, \ldots, J_n$ be intervals in $G$ so that $F \subseteq \cup_{i=1}^n J_i$ and $J_i$ is to the left of $J_{i+1}$. Fix $i$ for the moment. By applying Properties 1-4 to $J_i$, we have that for all $v \in Y_A$, there exists $(y,y') \in H_{J_i}$ so that 
\begin{itemize}
 \item  $y|_{]J_i} = y'|_{]J_i}= v|_{]J_i}$,
    \item $y'|_{J_i}=f$, $y$ and $u$ agree on $F \cap J_i$, 
    \item $y|_{I \cap J_i[}  = y'|_{I \cap J_i[} =f$  and $y|_{I[}  = y'|_{I[} =t$.
\end{itemize}
Applying the above observation with $i=1$ and $v=v_0$, we  obtain $(w_0,w_1) \in H_{J_1}$ so that $w_0|_{]J_1}=w_1|_{]J_1} =v_0$, $w_0|_{J_1}=f$, $w_1$ is $u$ on $F\cap J_1$, $w_0|_{I \cap J_1[}  = w_1|_{I \cap J_1[} =f$  and $w_0|_{I[}  = w_1|_{I[} =t$.  Now, we apply the observation with $i=2$, and  $v=w_1$.  Hence, we obtain  $(y,w_2) \in H_{J_2}$ so  that 
 $y|_{]J_2}=w_2|_{]J_2} =w_1$, $y|_{J_2}=f$, $w_2$ is $u$ on $F\cap J_2$, $y_0|_{I \cap J_2[}  = w_2|_{I \cap J_2[} =f$  and $y|_{I[}  = w_2|_{I[} =t$.
Note that necessarily $y=w_1$. Hence, we have that $(w_0,w_1), (w_1, w_2) \in H$, i.e., $(w_0,w_2) \in H^+$.  Continuing in this fashion, setting $v=w_{i-1}$ at stage $i$, we arrive at $(w_0,w_n) \in H^+$ so that $w_n =u$ on $F$,  $w_0$ is the constant $f$ on $F$ and $w_0|_{]I}=w_1|_{]I}=v_0$,
completing the proof of $\stackrel {I}{\Box}H \subseteq  \Gamma (\overline{H})$.
\end{proof}

Let $I \subseteq (0,1)$. For the rest of this section we let
\[E_I = \stackrel {I}{\Box} \bigcup _{a \in I \cap L(A)} D_a.
\]
We note that $E_I$ depends on $A$ but $A$ is suppressed in the notation for the sake of  brevity.
\begin{proposition}\label{EI+} 
Let $A \in \Kcal$, $I \subseteq (0,1)$ and $M \subseteq Y_A ^2$ be such that for all $(y,y') \in M$, $y = y'$ on $ L(A) \setminus I$. Then, $ \left (\stackrel {I}{\Box} M \right )^+ = \stackrel {I}{\Box} M$. In particular, 
$E_I^+ = E_I$ for all $I \subseteq (0,1)$.
\end{proposition}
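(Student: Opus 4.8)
The plan is to reduce the whole statement to the single observation that, for each fixed $I\subseteq(0,1)$, the relation $\stackrel{I}{\asymp}$ on $Y_A$ is an equivalence relation: reflexivity, symmetry and transitivity are all immediate from the pointwise definition ``$y(a)=y'(a)$ for all $a\in L(A)\setminus I$''. The hypothesis on $M$ says exactly that every pair $(y,y')\in M$ satisfies $y\stackrel{I}{\asymp}y'$. First I would use this to get a clean description of $\stackrel{I}{\Box}M$, namely: $(w,w')\in\stackrel{I}{\Box}M$ if and only if $w\stackrel{I}{\asymp}w'$ and $w\stackrel{I}{\asymp}y$ for some $(y,y')\in M$. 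Indeed, if $(y,y')\in M$ witnesses membership then $w\stackrel{I}{\asymp}y\stackrel{I}{\asymp}y'\stackrel{I}{\asymp}w'$, so $w\stackrel{I}{\asymp}w'$; conversely, given $w\stackrel{I}{\asymp}w'$ and a pair $(y,y')\in M$ with $w\stackrel{I}{\asymp}y$, we get $w'\stackrel{I}{\asymp}w\stackrel{I}{\asymp}y\stackrel{I}{\asymp}y'$, so the same pair witnesses $(w,w')\in\stackrel{I}{\Box}M$. In this form $\stackrel{I}{\Box}M$ is visibly symmetric and closed under the chaining operation.

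The inclusion $\stackrel{I}{\Box}M\subseteq(\stackrel{I}{\Box}M)^{+}$ is immediate from the definition of $(\cdot)^{+}$. For the reverse inclusion I would take $(w_0,w_n)\in(\stackrel{I}{\Box}M)^{+}$, fix a witnessing chain $w_0,w_1,\dots,w_n$ with $(w_{i-1},w_i)\in\stackrel{I}{\Box}M$ for each $i$, and choose pairs $(y_i,y_i')\in M$ with $w_{i-1}\stackrel{I}{\asymp}y_i$ and $w_i\stackrel{I}{\asymp}y_i'$. Using $y_i\stackrel{I}{\asymp}y_i'$ (the hypothesis) together with symmetry and transitivity of $\stackrel{I}{\asymp}$, each link gives $w_{i-1}\stackrel{I}{\asymp}w_i$, hence $w_0\stackrel{I}{\asymp}w_n$; then the pair $(y_1,y_1')$ already witnesses $(w_0,w_n)\in\stackrel{I}{\Box}M$, since $w_0\stackrel{I}{\asymp}y_1$ and $w_n\stackrel{I}{\asymp}w_0\stackrel{I}{\asymp}y_1\stackrel{I}{\asymp}y_1'$. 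This yields $(\stackrel{I}{\Box}M)^{+}=\stackrel{I}{\Box}M$.

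For the ``in particular'' clause I would verify that $M=\bigcup_{a\in I\cap L(A)}D_a$ satisfies the hypothesis: if $(y,y')\in D_a$ then $y(b)=y'(b)$ for every $b\neq a$ by the definition of $D_a$, and since $a\in I$ every $b\in L(A)\setminus I$ is distinct from $a$, so $y$ and $y'$ agree on $L(A)\setminus I$. Applying the first part with this $M$ gives $E_I^{+}=E_I$. I do not expect a genuine obstacle here; the only thing needing care is the bookkeeping with the definition of $\stackrel{I}{\Box}$, in particular checking that the constraint a witness $(y,y')\in M$ imposes on the second coordinate $w'$ becomes automatic once $w\stackrel{I}{\asymp}w'$ is established — which is precisely the place where the assumption ``$y=y'$ on $L(A)\setminus I$ for all $(y,y')\in M$'' is used.
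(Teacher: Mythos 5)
Your proof is correct and follows essentially the same route as the paper's: both arguments use the hypothesis that every pair in $M$ is $\stackrel{I}{\asymp}$-related to propagate $\stackrel{I}{\asymp}$ along a chain and then let a single witnessing pair from $M$ certify membership of the composed pair in $\stackrel{I}{\Box}M$. The paper only writes out the two-link case (leaving the induction on chain length implicit), whereas you handle arbitrary chains and verify the ``in particular'' clause explicitly, but the underlying idea is identical.
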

\begin{proof}
To show that $ \left (\stackrel {I}{\Box} M \right )^+ \subseteq \stackrel {I}{\Box} M$,
let $(u,v),(v,w)\in \stackrel {I}{\Box}M$. This implies there exist $(x,x'),(y,y')\in M$ such that $u \stackrel  {I} {\asymp} x$, $v \stackrel  {I} {\asymp}  x'$, $v \stackrel  {I} {\asymp}  y'$ and $w \stackrel  {I} {\asymp}  y'$. Using the hypothesis we conclude that $x' \stackrel  {I} {\asymp}  w$, hence $(u,w)\in \stackrel {I}{\Box}M$. 
\end{proof}

\begin{proposition}\label{closedE}
Let $A \in \Kcal$, $\alpha$ be a countable ordinal and $I = (0,1)$ or $I\in \C(A^{\alpha +1})$. Let $G$ be the set of all  $J \in \C(A^{\alpha})$ such that $J \subseteq I$. Then, $ \bigcup_{J \in G}  E_{\overset{\multimapdotinv}{J}}  \bigcup \Delta_{Y_A} $ is closed. 
\end{proposition}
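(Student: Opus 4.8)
The plan is to show that $\mathcal{S} := \bigcup_{J \in G} E_{\overset{\multimapdotinv}{J}} \cup \Delta_{Y_A}$ is closed by fixing an arbitrary limit point $p = (y,y')$ of $\mathcal{S}$, choosing a sequence $p_n = (y_n,y_n') \in \mathcal{S}$ with $p_n \to p$, and verifying $p \in \mathcal{S}$ in each case. Since $\Delta_{Y_A}$ is closed, if infinitely many $p_n$ lie in $\Delta_{Y_A}$ we are done; so we may assume that each $p_n$ lies in some $E_{\overset{\multimapdotinv}{J_n}}$ with $J_n \in G$. Passing to a subsequence, either the $J_n$ are pairwise distinct, or some fixed $J \in G$ has $J_n = J$ for all $n$.

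In the first case I would use that the elements of $G$ are components of $(0,1)\setminus A^{\alpha}$, hence pairwise disjoint open intervals; since moreover each $\ell(J_n)$ lies in no $J_m$ with $m \neq n$ and the $\ell(J_n)$ are distinct, the sets $\overset{\multimapdotinv}{J_n}= J_n \cup \{\ell(J_n)\}$ are pairwise disjoint, so every fixed coordinate $a \in L(A)$ lies in $\overset{\multimapdotinv}{J_n}$ for at most one $n$. On the other hand, every element of $\bigcup_{a \in \overset{\multimapdotinv}{J_n} \cap L(A)} D_a$ is a pair differing only at a single coordinate of $\overset{\multimapdotinv}{J_n}$, and applying $\stackrel{\overset{\multimapdotinv}{J_n}}{\Box}$ (which only alters coordinates in $\overset{\multimapdotinv}{J_n}$) keeps such a pair agreeing off $\overset{\multimapdotinv}{J_n}$; hence every pair in $E_{\overset{\multimapdotinv}{J_n}}$ agrees off $\overset{\multimapdotinv}{J_n}$. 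Therefore, for each $a \in L(A)$, $y_n(a) = y_n'(a)$ for all but at most one $n$, so $y(a) = y'(a)$ in the limit; as $a$ was arbitrary, $y = y'$ and $p \in \Delta_{Y_A} \subseteq \mathcal{S}$.

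In the second case it suffices to prove that $E_{\overset{\multimapdotinv}{J}}$ is closed, for then $p \in E_{\overset{\multimapdotinv}{J}} \subseteq \mathcal{S}$. By Remark~\ref{rem:boxbasic}, $E_{\overset{\multimapdotinv}{J}} = \bigcup_{a \in \overset{\multimapdotinv}{J} \cap L(A)} \stackrel{\overset{\multimapdotinv}{J}}{\Box} D_a$, which is empty (hence closed) when $\overset{\multimapdotinv}{J} \cap L(A) = \emptyset$. Otherwise, for each $a \in \overset{\multimapdotinv}{J} \cap L(A)$ the contiguous-to-$A$ interval $I_a$ with $\ell(I_a)=a$ satisfies $I_a \subseteq \overline{J}$ and hence $I_a[ \,\supseteq J[$; thus, restricting the defining constraint $z|_{I_a[} = t_a|_{I_a[}$ of $D_a$ to coordinates outside $\overset{\multimapdotinv}{J}$ turns it into $w|_{J[\, \cap L(A)} = \tau_J$, where $\tau_J := t_a|_{J[}$ is independent of $a$ by Proposition~\ref{entuallysame} (with $J \in \C(A^{\alpha})$ playing the role of the ambient contiguous interval). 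Unwinding the definition of $\stackrel{\overset{\multimapdotinv}{J}}{\Box}$, this gives $\stackrel{\overset{\multimapdotinv}{J}}{\Box} D_a = \{(w,w') \in Y_A^2 : w \stackrel{\overset{\multimapdotinv}{J}}{\asymp} w' \textup{ and } w|_{J[\, \cap L(A)} = \tau_J\}$ for every such $a$, so $E_{\overset{\multimapdotinv}{J}}$ equals this single set, which is a coordinatewise agreement-and-equality condition in $Y_A^2$ and therefore closed.

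I expect the main obstacle to be this last step — verifying that $E_{\overset{\multimapdotinv}{J}}$ is closed — because of the bookkeeping with endpoint conventions ($\overset{\multimapdotinv}{J}$, $I[$, $I_a[$, including the degenerate case $r(J)=1$) and the use of Proposition~\ref{entuallysame} to see that the family $\{D_a : a \in \overset{\multimapdotinv}{J} \cap L(A)\}$ collapses, after boxing, to a single set. If one prefers to avoid extracting the explicit formula, an alternative is to repeat the limit-point argument of Proposition~\ref{closed} directly: a limit point of $E_{\overset{\multimapdotinv}{J}}$ with two distinct disagreement coordinates is separated from it by a basic open set; one with a single disagreement coordinate is a limit of pairs in $\stackrel{\overset{\multimapdotinv}{J}}{\Box} D_{a_n}$ with the $a_n$ eventually constant, hence lies in the closed set $\stackrel{\overset{\multimapdotinv}{J}}{\Box} D_a$; and a diagonal limit point inherits the $\tau_J$-constraint in the limit, hence again lies in $E_{\overset{\multimapdotinv}{J}}$. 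Assembling the cases yields $p \in \mathcal{S}$ in all instances, so $\mathcal{S}$ is closed.
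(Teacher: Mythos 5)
Your proposal is correct and follows essentially the same route as the paper: you first establish that each $E_{\overset{\multimapdotinv}{J}}$ is closed by collapsing $\bigcup_{a} \stackrel{\overset{\multimapdotinv}{J}}{\Box} D_a$ to a single explicit agreement-and-equality set via Proposition~\ref{entuallysame}, and then handle the union by observing that a convergent sequence running through pairwise distinct $J_n$'s must have diagonal limit, since each coordinate can disagree for at most one $n$. The paper's proof is the same argument with slightly less bookkeeping spelled out.
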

\begin{proof} We first show that $E_{\overset{\multimapdotinv}{J}}$ is closed for all $J \in \C(A^{\alpha})$. By Proposition~\ref{entuallysame}, there is $t:L(A) \cap J[ \rightarrow A$ such that for all  $(y,y') \in D_a$, $a \in L(A) \cap \overset{\multimapdotinv}{J}$, we have that $y|_{J[} = y'|_{J[} = t$. Now, from the definitions of $D_a$ and $\stackrel{\overset{\multimapdotinv}{J}}{\Box}$ we have that 
\[E_{\overset{\multimapdotinv}{J}} =  \left \{(y,y'): y|_{]\overset{\multimapdotinv}{J} } = y'|_{]\overset{\multimapdotinv}{J} } \ \ \& \ \ y|_{\overset{\multimapdotinv}{J} [} = y'|_{\overset{\multimapdotinv}{J} [}  = t \right \},  \]
implying that $E_{\overset{\multimapdotinv}{J}}$ is closed.

 Let 
$E = \bigcup_{J \in G}  E_{\overset{\multimapdotinv}{J}}\bigcup \Delta_{Y_A} $. To show that $E$ is closed, let $\{(y_n, y'_n)\}$  be a sequence in $\bigcup_{J \in G} E_{\overset{\multimapdotinv}{J}}$  that converges to $(y,y')\in Y_A^2$.  Let $J_n \in G$ such that $ (y_n,y'_n) \in  E_{\overset{\multimapdotinv}{J_n}}$. If there is $J \in G$ such that $J_n = J$ for infinitely many $n$'s, then by the fact that $E_{\overset{\multimapdotinv}{J}}$ is closed we have that $(y,y') \in E_{\overset{\multimapdotinv}{J}} \subseteq E$ and we are done. Hence, we may assume that $J_n$'s are all distinct. Since $ (y_n,y'_n) \in  E_{\overset{\multimapdotinv}{J_n}}$, this implies that for every $p\in L(A)$, $y_n(p)=y'_n(p)$ for all but at most one $n$. Hence, $y=y'$. As $\Delta_{Y_A} \subseteq E$, the proof is complete.
\end{proof}

Let $A \in \K$ and $\alpha$ be a countable ordinal. Then, we define
\[E_{\alpha} = \bigcup_{I \in \C(A^{\alpha})}  E_{\overset{\multimapdotinv}{I}}.\]
\begin{proposition}\label{basicprofEalpha}
Let $A \in \Kcal$. We have that $D_A \subseteq E_{\alpha}$, $E_{\alpha}$ is closed, and if $\alpha$ is a limit ordinal, then $E_{\alpha} = \overline{ \cup_{\beta < \alpha} E_{\beta}}$. 
\end{proposition}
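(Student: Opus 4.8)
The plan is to verify the three assertions in turn, each reducing to a property already established for the building blocks $E_{\overset{\multimapdotinv}{I}}$ and for the $\Box$-operator. First, for the inclusion $D_A \subseteq E_\alpha$: given $a \in L(A)$ with $a = \ell(J)$ for $J \in \C(A)$, there is a unique $I \in \C(A^\alpha)$ with $J \subseteq I$ (since $A^\alpha \subseteq A$ and $J$ is a maximal gap of $A$, it sits inside a maximal gap of $A^\alpha$), and in fact $a \in \overset{\multimapdotinv}{I}$: either $a \in I$, or $a = \ell(I) \in A^\alpha$. By the definition of $E_{\overset{\multimapdotinv}{I}} = \stackrel{\overset{\multimapdotinv}{I}}{\Box} \bigcup_{c \in \overset{\multimapdotinv}{I} \cap L(A)} D_c$, we get $D_a \subseteq \stackrel{\overset{\multimapdotinv}{I}}{\Box} D_a \subseteq E_{\overset{\multimapdotinv}{I}} \subseteq E_\alpha$, using $M \subseteq \stackrel{I}{\Box}M$ from Remark~\ref{rem:boxbasic}. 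Taking the union over $a \in L(A)$ gives $D_A \subseteq E_\alpha$ (the diagonal is absorbed since $\Delta_{Y_A} \subseteq D_{\max A} \subseteq D_A$).

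Second, closedness of $E_\alpha$: this is essentially Proposition~\ref{closedE} with $I = (0,1)$ — there $G$ is the collection of all $J \in \C(A^0) = \C(A^\alpha)$... wait, more carefully: I would invoke Proposition~\ref{closedE} directly with the ambient interval taken to be $(0,1)$ and with ``$\alpha$'' in that statement replaced by our $\alpha$, noting that the set $G$ of $J \in \C(A^\alpha)$ contained in $(0,1)$ is all of $\C(A^\alpha)$, so $\bigcup_{J \in G} E_{\overset{\multimapdotinv}{J}} \cup \Delta_{Y_A} = E_\alpha$ is closed. (Since $D_A \subseteq E_\alpha$ and $\Delta_{Y_A} \subseteq D_A$, the explicit $\cup \Delta_{Y_A}$ is redundant, but harmless.) Strictly, Proposition~\ref{closedE} is stated for $I = (0,1)$ or $I \in \C(A^{\alpha+1})$, and the $I = (0,1)$ case is exactly what is needed.

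Third, the limit-ordinal clause. Suppose $\alpha$ is a limit ordinal. The inclusion $\overline{\bigcup_{\beta < \alpha} E_\beta} \subseteq E_\alpha$ is the substantive direction: since $E_\alpha$ is closed (just proved), it suffices to show $E_\beta \subseteq E_\alpha$ for all $\beta < \alpha$. Fix $\beta < \alpha$ and $J \in \C(A^\beta)$. Because $A^\alpha \subseteq A^\beta$, the interval $J$ is contained in some $I \in \C(A^\alpha)$; and $\ell(J)$, if it lies in $L(A)$, satisfies $\ell(J) \in \overset{\multimapdotinv}{I}$ as before. I claim $E_{\overset{\multimapdotinv}{J}} \subseteq E_{\overset{\multimapdotinv}{I}}$: indeed $\overset{\multimapdotinv}{J} \subseteq \overset{\multimapdotinv}{I}$ (the endpoint decorations are compatible since $\ell(J) \in A^\beta$ and, when $\ell(J) = \ell(I)$, $\ell(I) \in A^\alpha$), so every $D_c$ with $c \in \overset{\multimapdotinv}{J} \cap L(A)$ appears in the union defining $E_{\overset{\multimapdotinv}{I}}$, and $\stackrel{\overset{\multimapdotinv}{J}}{\Box}(\,\cdot\,) \subseteq \stackrel{\overset{\multimapdotinv}{I}}{\Box}(\,\cdot\,)$ by monotonicity of $\Box$ in its index set (Remark~\ref{rem:boxbasic}, via $\stackrel{I}{\Box}\stackrel{J}{\Box}M \subseteq \stackrel{I\cup J}{\Box}M$ together with $J \subseteq I \Rightarrow \stackrel{J}{\Box}M = \stackrel{I}{\Box}\stackrel{J}{\Box}M$ after applying $\stackrel{I}{\Box}$). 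Hence $E_\beta = \bigcup_{J \in \C(A^\beta)} E_{\overset{\multimapdotinv}{J}} \subseteq E_\alpha$. For the reverse inclusion $E_\alpha \subseteq \overline{\bigcup_{\beta < \alpha} E_\beta}$: fix $I \in \C(A^\alpha)$ with endpoints $a < b$; since $\alpha$ is a limit, $A^\alpha = \bigcap_{\beta < \alpha} A^\beta$ and $a, b \in A^\alpha$ means $a,b \in A^\beta$ for every $\beta$, while points of $(a,b)$ eventually leave $A^\beta$, so for each compact $K \Subset (a,b)$ there is $\beta_K < \alpha$ with $K \cap A^{\beta_K} = \emptyset$; this lets one express any $(y,y') \in E_{\overset{\multimapdotinv}{I}}$ as a limit of points in $\bigcup_{\beta < \alpha} E_{\overset{\multimapdotinv}{I_\beta}}$ where $I_\beta \in \C(A^\beta)$ shrinks up to $I$, the convergence being in the product topology of $Y_A$ (agreement on larger and larger finite sets of coordinates).

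The main obstacle I anticipate is the reverse inclusion in the limit clause — making the approximation of $E_{\overset{\multimapdotinv}{I}}$ by the $E_\beta$'s precise. The delicate point is that a pair $(y,y') \in E_{\overset{\multimapdotinv}{I}}$ is constrained on $\overset{\multimapdotinv}{I}[$ to equal the fixed function $t$, whereas pairs in $E_{\overset{\multimapdotinv}{I_\beta}}$ are constrained on the larger set $\overset{\multimapdotinv}{I_\beta}[ \supseteq \overset{\multimapdotinv}{I}[$; one must check these constraints are compatible in the limit, which is where Propositions~\ref{entuallysame} and \ref{fixedhigherlevel} (the functions $t_a$ stabilize as one passes to higher Cantor--Bendixson levels, taking the value $r(I)$ on the relevant gap) do the work. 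Everything else is bookkeeping with the $\Box$-operator and the definition of the Cantor--Bendixson derivative at limit stages.
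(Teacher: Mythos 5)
Your first two claims are handled correctly and along the same lines as the paper: the inclusion $D_A \subseteq E_\alpha$ via $J \subseteq I$ for some $I \in \C(A^\alpha)$ together with $\overset{\multimapdotinv}{J}\subseteq\overset{\multimapdotinv}{I}$, and closedness via Proposition~\ref{closedE} with $I=(0,1)$. Your argument that $E_\beta \subseteq E_\alpha$ for $\beta<\alpha$ (monotonicity of $\stackrel{I}{\Box}$ in the index set plus absorption of the $D_c$'s) is in fact spelled out more carefully than in the paper, which merely asserts this containment. So the easy direction $\overline{\bigcup_{\beta<\alpha}E_\beta}\subseteq E_\alpha$ is complete.

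The gap is the reverse inclusion at limit ordinals, which you correctly identify as the main obstacle but then leave as a sketch. You have the right mechanism (gaps $J\in\C(A^\beta)$ increasing up to $I\in\C(A^\alpha)$, convergence in the product topology meaning agreement on finitely many coordinates), but the step that actually needs to be executed is the construction of the approximating pair, and your diagnosis of where the difficulty lies is off. A pair $(y,y')\in E_{\overset{\multimapdotinv}{I}}$ comes with a witness $(w,w')\in D_a$, $a\in\overset{\multimapdotinv}{I}\cap L(A)$, with $w\stackrel{\overset{\multimapdotinv}{I}}{\asymp}y$ and $w'\stackrel{\overset{\multimapdotinv}{I}}{\asymp}y'$. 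Given target coordinates $a_1<\dots<a_n$ (which one may assume include $a$), one chooses $\beta<\alpha$ large enough that some $J\in\C(A^\beta)$ satisfies $\overset{\multimapdotinv}{I}\cap\{a_1,\dots,a_n\}\subseteq\overset{\multimapdotinv}{J}\subseteq\overset{\multimapdotinv}{I}$, and then defines $(z,z')$ by modifying $(w,w')$ \emph{only on} $\overset{\multimapdotinv}{J}$ so as to agree with $(y,y')$ at the $a_i$'s lying in $\overset{\multimapdotinv}{J}$; since $(z,z')$ is a $\overset{\multimapdotinv}{J}$-modification of an element of $D_a$ with $a\in\overset{\multimapdotinv}{J}$, it lies in $E_{\overset{\multimapdotinv}{J}}\subseteq E_\beta$ by the very definition of $\stackrel{\overset{\multimapdotinv}{J}}{\Box}$, and it matches $(y,y')$ at every $a_i$ (those outside $\overset{\multimapdotinv}{I}$ because there $y=w=z$). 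In particular the compatibility of the $t$-constraints on $\overset{\multimapdotinv}{I}[$ versus $\overset{\multimapdotinv}{J}[$, which you flag as the delicate point and propose to resolve via Propositions~\ref{entuallysame} and~\ref{fixedhigherlevel}, is not actually an issue: the $\Box$-operator imposes no constraint on the modified coordinates, so no stabilization of the functions $t_a$ is needed here. Without the explicit construction of $(z,z')$ your proof of the limit clause is incomplete.
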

\begin{proof} 
That $D_A \subseteq E_{\alpha}$ follows from the fact that if $J \in \C(A)$, then $J \subseteq I$ for some $I \in \C(A^{\alpha})$. Note that $\Delta_{Y_A} \subseteq D_{\max A} \subseteq E_{\alpha}$. Now that $E_{\alpha}$ is closed follows from Proposition~\ref{closedE}.   

Suppose $\alpha$ is a limit ordinal. For each $\beta < \alpha$, $E_{\beta} \subseteq E_{\alpha}$. As $E_{\alpha}$ is closed, we have the containment $E_{\alpha} \supseteq \overline{ \cup_{\beta < \alpha} E_{\beta}}$. To see the reverse containment, let $(y,y') \in E_{\alpha}$. By the definition of $E_{\alpha}$, there is $ I\in \C(A^{\alpha})$, $a \in \overset{\multimapdotinv}{I}$ and $(w,w') \in D_a$ such that $w \stackrel  {\overset{\multimapdotinv}{I}} {\asymp} y, w' \stackrel  {\overset{\multimapdotinv}{I}} {\asymp}  y'$.  Let $a_1 < \ldots < a_n$ be in $L(A)$ and $(U_1,V_1), \ldots ,(U_n,V_n)$ be open sets in $Y^2$ such that $(y(a_i),y'(a_i)) \in (U_i,V_i)$. To complete the proof, it suffices to show that there is $\beta < \alpha$ and $(z,z') \in E_{\beta}$ such that $(z(a_i),z'(a_i)) \in (U_i,V_i)$. By enlarging, if necessary, we assume that $a$ is one of the $a_i$'s. As $\alpha$ is a limit ordinal, we have that $A^{\beta} \rightarrow A^{\alpha}$ in the Hausdorff metric as $\beta \rightarrow \alpha$. Hence, we may choose $\beta <\alpha$ sufficiently large so that there is $J \in \C(A^{\beta})$ such
$I \cap \{a_1, \ldots, a_n\} \subseteq {J} \subseteq {I}$. Moreover, if $\ell (I) \in L(A)$, we may obtain $J$ so that, in addition, that $\ell(J) = \ell (I)$. Hence, for sufficiently large $\beta$ we have 
that ${\overset{\multimapdotinv}{I}} \cap \{a_1, \ldots, a_n\} \subseteq {\overset{\multimapdotinv}{J}} \subseteq {\overset{\multimapdotinv}{I}}$. We define $z, z'$ in the following fashion: $z = w$, $z' = w'$ on $L(A) \setminus \overset{\multimapdotinv}{J}$, $z(a_i)= y(a_i)$, $z'(a_i) = y'(a_i)$ for all $a_i$'s in $\overset{\multimapdotinv}{J}$ and $z(b) = w(b)$, $z'(b) = w'(b)$ for $b \in  \left (L(A) \cap \overset{\multimapdotinv}{J} \right ) \setminus \{a_1, \ldots, a_n\}$. Note that $w \stackrel  {\overset{\multimapdotinv}{J}} {\asymp} z, w' \stackrel  {\overset{\multimapdotinv}{J}} {\asymp}  z'$ and as $a \in \overset{\multimapdotinv}{I} \cap \{a_1, \ldots, a_n\} \subseteq \overset{\multimapdotinv}{J} $ and $(w,w') \in D_a$, we have that $(z,z') \in E_{\overset{\multimapdotinv}{J}} \subseteq E_{\beta}$. If $a_i \in \overset{\multimapdotinv}{I}$, then we have that $a_i \in \overset{\multimapdotinv}{J}$ and hence $z(a_i) = y(a_i)$ and $z'(a_i)=y'(a_i)$, implying that $(z(a_i),z'(a_i)) \in (U_i, V_i)$. If $a_i \notin \overset{\multimapdotinv}{I}$, then $a_i \notin \overset{\multimapdotinv}{J} $. In this case, we have that $z(a_i) = w(a_i) = y(a_i)$ as $w \stackrel  {\overset{\multimapdotinv}{I}} {\asymp} y$ and similarly $z'(a_i) = y'(a_i)$, yielding that $(z(a_i), z'(a_i)) \in (U_i, V_i)$ and completing the proof.
\end{proof}
\begin{proposition}\label{notfull}
Let $A \in \Kcal$ and $\alpha$ an ordinal. If  $I, J \in \C (A^{\alpha})$ are disjoint with 
$L(A) \cap \overset{\multimapdotinv}{I} \neq \emptyset$ and $L(A) \cap \overset{\multimapdotinv}{J} \neq \emptyset$, then $E_{\alpha} \neq Y_A^2$. 
\end{proposition}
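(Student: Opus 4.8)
The plan is to exhibit an explicit pair $(w,w')\in Y_A^2$ that belongs to no block $E_{\overset{\multimapdotinv}{I_0}}$ with $I_0\in\C(A^{\alpha})$, hence not to $E_{\alpha}$. The guiding observation is that membership of $(w,w')$ in a single block $E_{\overset{\multimapdotinv}{I_0}}=\stackrel{\overset{\multimapdotinv}{I_0}}{\Box}\bigcup_{a\in\overset{\multimapdotinv}{I_0}\cap L(A)}D_a$ forces $w$ and $w'$ to agree off the interval $\overset{\multimapdotinv}{I_0}$. Indeed, a witness is a pair $(y,y')\in D_a$ with $a\in\overset{\multimapdotinv}{I_0}\cap L(A)$, $w\stackrel{\overset{\multimapdotinv}{I_0}}{\asymp}y$ and $w'\stackrel{\overset{\multimapdotinv}{I_0}}{\asymp}y'$; by the definition of $D_a$ we have $y\stackrel{a}{\asymp}y'$, and since $a\in\overset{\multimapdotinv}{I_0}$ this gives $w(c)=y(c)=y'(c)=w'(c)$ for every $c\in L(A)\setminus\overset{\multimapdotinv}{I_0}$. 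So every pair in $E_{\alpha}$ agrees off $\overset{\multimapdotinv}{I_0}$ for some $I_0\in\C(A^{\alpha})$.

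Next I would use the hypothesis to fix $a_I\in L(A)\cap\overset{\multimapdotinv}{I}$ and $a_J\in L(A)\cap\overset{\multimapdotinv}{J}$. Since $I$ and $J$ are distinct maximal connected components of $(0,1)\setminus A^{\alpha}$, they are disjoint; their left endpoints lie in $A^{\alpha}\cup\{0\}$, lie outside the other interval, and are distinct (a point is the left endpoint of a unique contiguous interval of $A^{\alpha}$), so $\overset{\multimapdotinv}{I}\cap\overset{\multimapdotinv}{J}=\emptyset$ and in particular $a_I\neq a_J$. Using that $Y$ is a Cantor space by Lemma~\ref{startup} (so $|Y|>1$), fix any $w\in Y_A$ and choose $w'\in Y_A$ that agrees with $w$ on $L(A)\setminus\{a_I,a_J\}$ but satisfies $w'(a_I)\neq w(a_I)$ and $w'(a_J)\neq w(a_J)$.

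Finally I would close by contradiction. If $(w,w')\in E_{\alpha}$, then by the first step there is $I_0\in\C(A^{\alpha})$ with $w$ and $w'$ agreeing on $L(A)\setminus\overset{\multimapdotinv}{I_0}$; since they disagree at $a_I$ and at $a_J$, both points lie in $\overset{\multimapdotinv}{I_0}$. The crux is the set-theoretic claim: if $a\in\overset{\multimapdotinv}{I}\cap\overset{\multimapdotinv}{I_0}$ with $I,I_0\in\C(A^{\alpha})$, then $I_0=I$. To see it, note $a$ is either an interior point of $I$ or equals $\ell(I)$. In the first case $a\notin A^{\alpha}$, so $a$ cannot be $\ell(I_0)$ (which lies in $A^{\alpha}\cup\{0\}$), hence $a\in I_0$, so the maximal components $I$ and $I_0$ share the point $a$ and coincide. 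In the second case $a=\ell(I)\in A^{\alpha}$, so $a\notin I_0$, forcing $a=\ell(I_0)$; since a point of $A^{\alpha}$ is the left endpoint of a unique contiguous interval, $I=I_0$. Applying this to $a_I$ gives $I_0=I$ and to $a_J$ gives $I_0=J$, contradicting $I\neq J$. Hence $(w,w')\notin E_{\alpha}$ and $E_{\alpha}\neq Y_A^2$. I expect this last combinatorial claim (distinguishing the two positions $a$ can occupy in $\overset{\multimapdotinv}{I}$) to be the only place needing care; everything else is routine unwinding of the definitions of $\stackrel{\cdot}{\Box}$ and $D_a$.
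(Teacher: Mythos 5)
Your proof is correct and follows essentially the same route as the paper's: fix $a_I \in L(A)\cap\overset{\multimapdotinv}{I}$ and $a_J \in L(A)\cap\overset{\multimapdotinv}{J}$ and observe that any pair in $E_\alpha$ must agree at one of these two points, since each block $E_{\overset{\multimapdotinv}{I_0}}$ only permits disagreement inside $\overset{\multimapdotinv}{I_0}$, which cannot meet both $\overset{\multimapdotinv}{I}$ and $\overset{\multimapdotinv}{J}$. The paper states this in two lines; your write-up merely supplies the details (the agreement-off-$\overset{\multimapdotinv}{I_0}$ unwinding of $\Box$ and $D_a$, and the endpoint case analysis) that the paper leaves implicit.
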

\begin{proof}
Let $a \in L(A) \cap \overset{\multimapdotinv}{I}$ and $b \in L(A) \cap \overset{\multimapdotinv}{J}$. Let $(y,y') \in E_{\alpha}$. By the definitions of $D_A$, the properties of $I$ of $J$ and definition of $E_{\alpha}$, we have that either $y(a) = y'(a)$ or $y(b)= y'(b)$, implying that $E_{\alpha} \neq Y_A^2$. 
\end{proof}
\begin{proposition}\label{decomposition}
Let $A \in \Kcal$ and $\alpha$ be a countable ordinal such that $A^{\alpha} \neq \emptyset$. Let $I \in \C(A^{\alpha+1})$ and let $G$ be the collection of $\overset{\multimapdotinv}{J}$ such that $J \in \C(A^{\alpha})$, $J \subseteq I$ and $\overset{\multimapdotinv}{J} \cap L(A) \neq \emptyset$. Then,  $G$ is a collection of pairwise disjoint intervals such that $\overset{\multimapdotinv}{I} \cap L(A) \subseteq \cup G$.
\end{proposition}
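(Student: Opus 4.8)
The plan is to verify the two assertions separately. That each member of $G$ is an interval is immediate, since $\overset{\multimapdotinv}{J}$ is either $J$ itself or $[\ell(J), r(J))$. For pairwise disjointness, I would begin from the fact that distinct contiguous intervals of $A^{\alpha}$ are already disjoint, so any overlap between $\overset{\multimapdotinv}{J}$ and $\overset{\multimapdotinv}{J'}$ with $J \neq J'$ in $\C(A^{\alpha})$ could only be caused by an adjoined left endpoint. But whenever $\overset{\multimapdotinv}{J}$ properly contains $J$ we have $\ell(J) \in A$, so $\ell(J) \neq 0$, and since $J \in \C(A^{\alpha})$ this forces $\ell(J) \in A^{\alpha}$; as a contiguous interval of $A^{\alpha}$ is disjoint from $A^{\alpha}$, such a point cannot lie in another $\overset{\multimapdotinv}{J'}$, and two distinct contiguous intervals of $A^{\alpha}$ cannot share a left endpoint (the smaller right endpoint would otherwise be a point of $A^{\alpha}$ inside the other interval). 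Running through these few cases should settle disjointness.

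The containment $\overset{\multimapdotinv}{I} \cap L(A) \subseteq \cup G$ is the substantive half, and I would treat a point $a \in \overset{\multimapdotinv}{I} \cap L(A)$ according to where it sits. If $a \in I$ but $a \notin A^{\alpha}$, then $a$ belongs to some $J \in \C(A^{\alpha})$; since $J$ is a connected subset of $(0,1) \setminus A^{\alpha+1}$ meeting $I$, maximality of $I$ forces $J \subseteq I$, and from $a \in J \subseteq \overset{\multimapdotinv}{J}$ and $a \in L(A)$ we get $\overset{\multimapdotinv}{J} \in G$. The remaining possibilities all have $a \in A^{\alpha}$: either $a \in I \cap (A^{\alpha} \setminus A^{\alpha+1})$, so $a$ is isolated in $A^{\alpha}$, or $a = \ell(I)$, in which case $a \in A$ forces $a \neq 0$ and hence $a \in A^{\alpha+1} \subseteq A^{\alpha}$. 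In both of these, the crucial observation is that no point of $A^{\alpha}$ lies immediately to the right of $a$ — in the isolated case by definition, and in the case $a = \ell(I) \in L(A)$ because $a$ is the left endpoint of an interval contiguous to $A$, which is free of points of $A \supseteq A^{\alpha}$. Consequently $a = \ell(J)$ for some $J \in \C(A^{\alpha})$; then $\ell(J) = a \in A$ gives $a \in \overset{\multimapdotinv}{J}$ and $\overset{\multimapdotinv}{J} \cap L(A) \neq \emptyset$, while a short argument using $r(I) \in A^{\alpha+1} \subseteq A^{\alpha}$ (or $r(I) = 1$) shows $r(J) \le r(I)$, so $J \subseteq I$ and $\overset{\multimapdotinv}{J} \in G$.

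I expect the delicate point to be the case $a = \ell(I)$: a priori, a left endpoint of a contiguous interval of $A^{\alpha+1}$ need not be a left endpoint of a contiguous interval of $A^{\alpha}$, because isolated points of $A^{\alpha}$ could accumulate at $a$ from the right. What rescues the argument is exactly that such an accumulation would place points of $A$ arbitrarily close to $a$ on the right, preventing $a$ from being in $L(A)$; so the hypothesis $a \in L(A)$ supplies precisely the buffer interval that is needed. Beyond this, the only care required is bookkeeping over whether an adjoined endpoint is present and over the boundary situations $\ell(I) = 0$ and $r(I) = 1$, all of which are routine.
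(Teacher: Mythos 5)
Your proof is correct and takes essentially the same route as the paper: the same case split on whether $a$ lies in $A^{\alpha}$, with the hypothesis $a \in L(A)$ supplying an interval contiguous to $A$ (hence free of $A^{\alpha}$) immediately to the right of $a$, which is exactly the paper's mechanism for concluding that $a = \ell(J)$ for some $J \in \C(A^{\alpha})$ with $J \subseteq I$. You are more explicit than the paper about the adjoined endpoints in the disjointness claim and about why $r(J) \le r(I)$, but these are elaborations of the same argument rather than a different one.
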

\begin{proof}
That $G$ is pairwise disjoint is clear as elements of $\C(A^{\alpha})$ are pairwise disjoint open intervals. Let $a \in \overset{\multimapdotinv}{I} \cap L(A)$. If $a \in I \setminus A^{\alpha}$, then $a$ is in some interval contiguous to $A^{\alpha}$ which is a subset of $I$ and we are done. Now suppose that $a \in A^{\alpha}$. Recall that by the fact that $a \in L(A)$, $a$ is the left endpoint of some open interval contiguous to $A$. That interval is a subset of some open interval $J$ contiguous to $A^{\alpha}$. As $a \in A^{\alpha}$,  the left endpoint of $J$ is $a$. As $a \in \overset{\multimapdotinv}{I}$, we have that $J \subseteq I$. Then $a \in \overset{\multimapdotinv}{J} \in G$, concluding the proof.
\end{proof}
\begin{proposition}\label{GammaEalpha}
Let $A \in \Kcal$ with $A^{\alpha} \neq \emptyset$. Then, $E_{\alpha +1} \subseteq \Gamma(E_{\alpha})$.  \end{proposition}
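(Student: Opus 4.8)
The plan is to prove the containment one contiguous interval at a time. Since $E_{\alpha+1}=\bigcup_{I\in\C(A^{\alpha+1})}E_{\overset{\multimapdotinv}{I}}$ and $\Gamma$ is monotone, it is enough to fix $I\in\C(A^{\alpha+1})$ and show $E_{\overset{\multimapdotinv}{I}}\subseteq\Gamma(E_{\alpha})$. If $\overset{\multimapdotinv}{I}\cap L(A)=\emptyset$ then $E_{\overset{\multimapdotinv}{I}}=\emptyset$ and there is nothing to prove, so I would assume $\overset{\multimapdotinv}{I}\cap L(A)\neq\emptyset$. Take $G$ to be the family of intervals $\overset{\multimapdotinv}{J}$ with $J\in\C(A^{\alpha})$, $J\subseteq I$ and $\overset{\multimapdotinv}{J}\cap L(A)\neq\emptyset$; by Proposition~\ref{decomposition} this is a countable collection of pairwise disjoint subintervals of $\overset{\multimapdotinv}{I}$ with $\overset{\multimapdotinv}{I}\cap L(A)\subseteq\bigcup G$ (and one checks $\overset{\multimapdotinv}{J}\subseteq\overset{\multimapdotinv}{I}$ from $J\subseteq I$). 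For $\overset{\multimapdotinv}{J}\in G$ set $H_{\overset{\multimapdotinv}{J}}=E_{\overset{\multimapdotinv}{J}}$ and $H=\bigcup_{\overset{\multimapdotinv}{J}\in G}H_{\overset{\multimapdotinv}{J}}$. Because each such $J$ lies in $\C(A^{\alpha})$ we get $H\subseteq E_{\alpha}$, and $E_{\alpha}$ is closed by Proposition~\ref{basicprofEalpha}, so $\overline{H}\subseteq E_{\alpha}$ and $\Gamma(\overline{H})\subseteq\Gamma(E_{\alpha})$.

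The heart of the argument is to feed this data into Proposition~\ref{Enlarge} with the interval $\overset{\multimapdotinv}{I}$, the family $G$, the sets $\{H_{\overset{\multimapdotinv}{J}}\}$, the constant $f=r(I)\in Y$, and $t$ the common value of $t_a|_{I[}$ for $a\in\overset{\multimapdotinv}{I}\cap L(A)$ (this value is independent of $a$ by Proposition~\ref{entuallysame}, since all contiguous intervals of $A$ involved sit inside $I\in\C(A^{\alpha+1})$). Hypotheses (1) and (4) of Proposition~\ref{Enlarge} are immediate from $E_{\overset{\multimapdotinv}{J}}=\stackrel{\overset{\multimapdotinv}{J}}{\Box}\bigcup_{a\in\overset{\multimapdotinv}{J}\cap L(A)}D_a$ and Remark~\ref{rem:boxbasic}, and closedness of each $H_{\overset{\multimapdotinv}{J}}$ was already obtained inside the proof of Proposition~\ref{closedE}. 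Hypothesis (3) holds because an element of $\stackrel{\overset{\multimapdotinv}{J}}{\Box}D_a$ may be chosen to coincide with any prescribed $w\in Y_A$ off $\overset{\multimapdotinv}{J}$, in particular on $]\overset{\multimapdotinv}{J}$. The crux is hypothesis (2): if $(y,y')\in E_{\overset{\multimapdotinv}{J}}$ arises from $(w,w')\in D_{a_0}$ with $I_0\in\C(A)$, $\ell(I_0)=a_0$, then since $a_0\in\overset{\multimapdotinv}{J}$ one has $r(I_0)\le r(J)\le r(I)$, so on $\overset{\multimapdotinv}{J}[$ and on $I[$ (both contained in $I_0[$) the pair $(y,y')$ agrees with $(w,w')$ and hence with $t_{a_0}$ restricted there. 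This gives $y|_{\overset{\multimapdotinv}{I}[}=t_{a_0}|_{I[}=t$, while on $\overset{\multimapdotinv}{I}\cap\overset{\multimapdotinv}{J}[=I\cap J[$ we get $y|_{I\cap J[}=t_{a_0}|_{J[\cap I}=r(I)=f$ by Proposition~\ref{fixedhigherlevel}.

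Granting the hypotheses, Proposition~\ref{Enlarge} yields $\stackrel{\overset{\multimapdotinv}{I}}{\Box}H\subseteq\Gamma(\overline{H})\subseteq\Gamma(E_{\alpha})$, and it remains to recognize $\stackrel{\overset{\multimapdotinv}{I}}{\Box}H$ as $E_{\overset{\multimapdotinv}{I}}$. Using $\overset{\multimapdotinv}{J}\subseteq\overset{\multimapdotinv}{I}$ and Remark~\ref{rem:boxbasic} (so that $\stackrel{\overset{\multimapdotinv}{I}}{\Box}\stackrel{\overset{\multimapdotinv}{J}}{\Box}M=\stackrel{\overset{\multimapdotinv}{I}}{\Box}M$ and $\stackrel{\overset{\multimapdotinv}{I}}{\Box}$ distributes over unions), together with $(\bigcup G)\cap L(A)=\overset{\multimapdotinv}{I}\cap L(A)$ from Proposition~\ref{decomposition}, I would compute
\[
\stackrel{\overset{\multimapdotinv}{I}}{\Box}H=\bigcup_{\overset{\multimapdotinv}{J}\in G}\stackrel{\overset{\multimapdotinv}{I}}{\Box}\bigcup_{a\in\overset{\multimapdotinv}{J}\cap L(A)}D_a=\stackrel{\overset{\multimapdotinv}{I}}{\Box}\bigcup_{a\in\overset{\multimapdotinv}{I}\cap L(A)}D_a=E_{\overset{\multimapdotinv}{I}}.
\]
Taking the union over $I\in\C(A^{\alpha+1})$ then gives $E_{\alpha+1}\subseteq\Gamma(E_{\alpha})$. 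I expect the main obstacle to be purely the bookkeeping of the decorated intervals $\overset{\multimapdotinv}{I},\overset{\multimapdotinv}{J}$ and the endpoint conventions for $I[$ and $]I$ — in particular verifying $\overset{\multimapdotinv}{I}\cap\overset{\multimapdotinv}{J}[=I\cap J[$, $\overset{\multimapdotinv}{I}[=I[$ and the inequalities $r(I_0)\le r(J)\le r(I)$ — so that hypothesis (2) of Proposition~\ref{Enlarge} comes out cleanly from Propositions~\ref{fixedhigherlevel} and~\ref{entuallysame}.
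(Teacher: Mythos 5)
Your proposal is correct and follows essentially the same route as the paper's proof: the same decomposition $G$ of $\overset{\multimapdotinv}{I}$ into the intervals $\overset{\multimapdotinv}{J}$ from Proposition~\ref{decomposition}, the same application of Proposition~\ref{Enlarge} with $H_{\overset{\multimapdotinv}{J}}=E_{\overset{\multimapdotinv}{J}}$, $f=r(I)$ from Proposition~\ref{fixedhigherlevel} and $t$ from Proposition~\ref{entuallysame}, and the same final identification of $\stackrel{\overset{\multimapdotinv}{I}}{\Box}H$ with $E_{\overset{\multimapdotinv}{I}}$. The only cosmetic difference is that you deduce $\overline{H}\subseteq E_{\alpha}$ directly from the closedness of $E_{\alpha}$ (Proposition~\ref{basicprofEalpha}) rather than citing Proposition~\ref{closedE}, which is equally valid.
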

\begin{proof} Let $I \in \C(A^{\alpha +1})$.
It will suffice to show that $E_{\overset{\multimapdotinv}{I}} \subseteq \Gamma(E_{\alpha})$. Let  \[G=\{\overset{\multimapdotinv}{J}: J \subseteq I,  \overset{\multimapdotinv}{J} \cap L(A) \neq \emptyset, J \in \C(A^{\alpha})\}.\] 
We will apply Proposition~\ref{Enlarge} to $\overset{\multimapdotinv}{I}$, $G$ and $H_J = E_J$ to conclude $E_{\overset{\multimapdotinv}{I}} \subseteq \Gamma(E_{\alpha})$. We commence by verifying the hypotheses of Proposition~\ref{Enlarge}. By Proposition~\ref{decomposition}, we have that $G$ is a collection of pairwise disjoint sub-intervals of $\overset{\multimapdotinv}{I}$ with $\overset{\multimapdotinv}{I} \cap L(A) \subseteq \cup G$. Next choose $f$ according to Proposition~\ref{fixedhigherlevel} and  let $t=t_I$. Let us check that Properties 1-4 of Proposition~\ref{Enlarge} are satisfied. Property 1 follows simply from  the definition of $D_a$ and $E_J$. To see Property 2, let $(y,y') \in E_J$. By the definition of $\Box$, $(y,y')$ is obtained from some $D_a$, $a \in J \cap L(A)$. By Proposition~\ref{fixedhigherlevel}, we have that $y|_{\overset{\multimapdotinv}{I} \cap J[}= t_a|_{\overset{\multimapdotinv}{I} \cap J[}=f$.  Analogously, Proposition~\ref{entuallysame} implies that $y|_{\overset{\multimapdotinv}{I}[} =t$. To see Property 3, note that as $\overset{\multimapdotinv}{J} \cap L(A) \neq \emptyset$, some $D_a$ is a subset of $E_J$. Property 4 is clear by the definition of $E_J$. Finally, set $H= \bigcup _{J \in G} E_J$.  By Proposition~\ref{closedE} and the fact that $\Delta_{Y_A} \subseteq E_{\alpha}$, we have that 
\[\overline{H} \subseteq \bigcup _{J \in G} E_J \bigcup \Delta_{Y_A} \subseteq  E_{\alpha}.\]
Now drawing on the conclusion of Proposition~\ref{Enlarge}, we have that 
\[ \Gamma (E_{\alpha}) \supseteq \Gamma (\overline{H}) \supseteq \stackrel {I}{\Box}H.
\]
As $\overset{\multimapdotinv}{I} \cap L(A) \subseteq \bigcup G$, we have that $\bigcup_{a \in \overset{\multimapdotinv}{I} \cap L(A) } D_a \subseteq H $, implying that  $\stackrel {\overset{\multimapdotinv}{I}}{\Box}H \supseteq E_{\overset{\multimapdotinv}{I}}$ and concluding the proof of $E_{\alpha +1} \subseteq \Gamma(E_{\alpha})$.
\end{proof}

\begin{proposition}\label{alphaEnlarge}
Let $A \in \Kcal$ with $A^{\alpha} \neq \emptyset$.  Then, 
\[ E_{\alpha} \subseteq \Gamma ^{\alpha} (D_A). 
\]
\end{proposition}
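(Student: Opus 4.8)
The plan is to argue by transfinite induction on $\alpha$, where $\alpha$ ranges over the ordinals with $A^{\alpha} \neq \emptyset$. The two previously established facts that do all the work are Proposition~\ref{GammaEalpha}, giving the one-step containment $E_{\alpha+1} \subseteq \Gamma(E_{\alpha})$ whenever $A^{\alpha}\neq\emptyset$, and the limit clause of Proposition~\ref{basicprofEalpha}, namely $E_{\alpha} = \overline{\bigcup_{\beta<\alpha}E_{\beta}}$ for limit $\alpha$. I will also use the trivial monotonicity of $\Gamma$: if $M \subseteq M'$ then every $\Gamma$-chain in $M$ is a $\Gamma$-chain in $M'$, so $\Gamma(M) = \overline{M^{+}\cup\Delta_{Y_A}} \subseteq \overline{M'^{+}\cup\Delta_{Y_A}} = \Gamma(M')$, and hence $\Gamma^{\beta}$ is monotone for every $\beta$.

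For the base case $\alpha = 0$ I would first check that $E_{0} = D_{A}$. By definition $E_{0} = \bigcup_{I\in\C(A)}E_{\overset{\multimapdotinv}{I}}$. For $I \in \C(A)$ the interval $I$ is open and disjoint from $A \supseteq L(A)$, so $\overset{\multimapdotinv}{I}\cap L(A)$ is either empty (when $\ell(I) \notin A$, i.e.\ $\ell(I) = 0$), in which case $E_{\overset{\multimapdotinv}{I}} = \emptyset$, or the singleton $\{\ell(I)\}$. In the latter case, unwinding the definitions of $D_{\ell(I)}$ and $\stackrel{\overset{\multimapdotinv}{I}}{\Box}(\cdot)$ shows that $\stackrel{\overset{\multimapdotinv}{I}}{\Box}D_{\ell(I)} = D_{\ell(I)}$, because $D_{\ell(I)}$ already imposes no constraint on the $\ell(I)$-coordinate of either member of a pair, while $\overset{\multimapdotinv}{I}$ meets $L(A)$ only in $\ell(I)$. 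Since every $a \in L(A)$ equals $\ell(I)$ for exactly one $I \in \C(A)$, we obtain $E_{0} = \bigcup_{a\in L(A)}D_{a} = D_{A} \subseteq \Gamma^{0}(D_{A})$.

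For the successor step, let $\alpha = \beta + 1$ with $A^{\beta+1}\neq\emptyset$; then $A^{\beta} \supseteq A^{\beta+1}\neq\emptyset$, so the inductive hypothesis gives $E_{\beta}\subseteq\Gamma^{\beta}(D_{A})$. Applying Proposition~\ref{GammaEalpha} (permissible since $A^{\beta}\neq\emptyset$) and then monotonicity of $\Gamma$ yields $E_{\beta+1} \subseteq \Gamma(E_{\beta}) \subseteq \Gamma(\Gamma^{\beta}(D_{A})) = \Gamma^{\beta+1}(D_{A})$. For a limit ordinal $\alpha$ with $A^{\alpha}\neq\emptyset$, every $\beta < \alpha$ satisfies $A^{\beta} \supseteq A^{\alpha}\neq\emptyset$, so the inductive hypothesis gives $E_{\beta}\subseteq\Gamma^{\beta}(D_{A})$ for all $\beta<\alpha$; taking unions and closures and invoking the limit clause of Proposition~\ref{basicprofEalpha} gives $E_{\alpha} = \overline{\bigcup_{\beta<\alpha}E_{\beta}} \subseteq \overline{\bigcup_{\beta<\alpha}\Gamma^{\beta}(D_{A})} = \Gamma^{\alpha}(D_{A})$, closing the induction.

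I expect no genuine obstacle: all the substantive content has already been packaged into Propositions~\ref{GammaEalpha} and~\ref{basicprofEalpha}. The only items left to spell out are bookkeeping, namely that each $E_{\alpha}$, $D_{A}$, and $\Gamma^{\beta}(D_{A})$ is a symmetric subset of $Y_{A}^{2}$ so that $\Gamma$ legitimately applies, together with the elementary identity $\stackrel{\overset{\multimapdotinv}{I}}{\Box}D_{\ell(I)} = D_{\ell(I)}$ used in the base case; both follow immediately from the definitions.
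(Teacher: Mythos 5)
Your proof is correct and follows essentially the same route as the paper: the base case unwinds $E_0=D_A=\Gamma^0(D_A)$, the successor step combines Proposition~\ref{GammaEalpha} with monotonicity of $\Gamma$ applied to the induction hypothesis, and the limit step invokes the limit clause of Proposition~\ref{basicprofEalpha}. The only difference is that you spell out the identity $\stackrel{\overset{\multimapdotinv}{I}}{\Box}D_{\ell(I)}=D_{\ell(I)}$ in the base case, which the paper leaves implicit.
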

\begin{proof}
We proceed by induction on $\alpha$. 
At stage $\alpha =0$, we have that 
\[
E_0 = \bigcup_{I \in \C(A)} \stackrel {\overset{\multimapdotinv}{I}}{\Box}\bigcup _{a \in \overset{\multimapdotinv}{I} \cap L(A)}D_A= \bigcup_{a \in L(A)} \stackrel {a}{\Box}  D_a=D_A = \Gamma ^{0}(D_A).
\]
Suppose that $\alpha >0$ is a countable ordinal such that for all $\beta < \alpha$ we have that $E_{\beta} \subseteq \Gamma ^{\beta} (D_A)$. If $\alpha$ is a limit ordinal, then
\[ \Gamma^{\alpha}(D_A) = \overline {\bigcup _{\beta < \alpha} \Gamma^{\beta}(D_A )} \supseteq \overline {\bigcup _{\beta < \alpha} E_{\beta}}= E_{\alpha},
\]
where the rightmost equality follows by Proposition~\ref{basicprofEalpha}. If $\alpha$ is successor ordinal, then, by the induction hypothesis and  Proposition~\ref{GammaEalpha},
\[ \Gamma^{\alpha}(D_A) = \Gamma (\Gamma^{\alpha-1}(D_A)) \supseteq \Gamma (E_{\alpha-1}) \supseteq E_{\alpha},
\]
\end{proof}

Next we would like to study the situation when $A^{\infty} \neq \emptyset$. It may seem that in this situation, $\Gamma (E_{\alpha}) = E_{\alpha}$
where $\alpha = |A|_{CB}$. Unfortunately, such is not the case. We modify $E_{\alpha}$ appropriately to obtain a set $E$ for which $\Gamma (E) = E$.
\begin{proposition}\label{perfectstable}
Let $A \in \Kcal$ with $A^{\infty} \neq \emptyset$, $\alpha = |A|_{CB}$ and $I_e\in \C(A^{\alpha})$ such that $r(I_e) =1$. We set 
\[ E = \bigcup_{I \in \C(A^{\alpha})} \stackrel {\overset{\multimapdotinv}{I_e}}{\Box} E_{\overset{\multimapdotinv}{I}}.
\]
Then, $D_A \subseteq E$, $E$ is closed and $\Gamma(E) = E$. Moreover, $E \neq Y_A^2$.
\end{proposition}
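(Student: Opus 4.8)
The plan is to verify the four assertions in turn, the third ($\Gamma(E)=E$) being the crux. Throughout I would abbreviate $P_I:=\stackrel{\overset{\multimapdotinv}{I_e}}{\Box} E_{\overset{\multimapdotinv}{I}}$ for $I\in\C(A^{\alpha})$, so $E=\bigcup_{I\in\C(A^{\alpha})}P_I$, and first record a few facts. By Remark~\ref{rem:boxbasic} one has $P_I=\stackrel{\overset{\multimapdotinv}{I_e}\cup\overset{\multimapdotinv}{I}}{\Box}\bigcup_{a\in\overset{\multimapdotinv}{I}\cap L(A)}D_a$ (the reverse inclusion is routine), so $P_I$ is closed (Proposition~\ref{closedE} together with Remark~\ref{rem:boxbasic}), symmetric, and every $(y,y')\in P_I$ agrees on $L(A)\setminus(\overset{\multimapdotinv}{I_e}\cup\overset{\multimapdotinv}{I})$; moreover, by Proposition~\ref{perfectbarrier}, every $(y,y')\in P_I$ satisfies $y(c)=y'(c)=r(I)$ for all $c\in L(A)\cap(r(I),\max A^{\infty})$, where $r(I)\in C\subseteq fix(Y,R)\subseteq Y$ (call this the \emph{barrier fact}). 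Then $D_A\subseteq E$ is immediate from Proposition~\ref{basicprofEalpha}: $D_A\subseteq E_{\alpha}=\bigcup_I E_{\overset{\multimapdotinv}{I}}\subseteq\bigcup_I P_I=E$. Also $\Delta_{Y_A}\subseteq E$: since $\max A\in\overset{\multimapdotinv}{I_e}\cap L(A)$ and $\Delta_{Y_A}\subseteq D_{\max A}$, one gets $\Delta_{Y_A}\subseteq\stackrel{\overset{\multimapdotinv}{I_e}}{\Box}D_{\max A}\subseteq E_{\overset{\multimapdotinv}{I_e}}=P_{I_e}\subseteq E$.

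For closedness, let $(y_n,y'_n)\to(y,y')$ with $(y_n,y'_n)\in P_{I_n}$. If some $I$ occurs for infinitely many $n$, closedness of $P_I$ finishes; otherwise the sets $\overset{\multimapdotinv}{I_n}$ are pairwise disjoint, so for each coordinate $p\in L(A)\setminus\overset{\multimapdotinv}{I_e}$ we have $y_n(p)=y'_n(p)$ for all but at most one $n$, hence $y=y'$ on $L(A)\setminus\overset{\multimapdotinv}{I_e}$, i.e.\ $(y,y')\in\stackrel{\overset{\multimapdotinv}{I_e}}{\Box}\Delta_{Y_A}\subseteq\stackrel{\overset{\multimapdotinv}{I_e}}{\Box}D_{\max A}\subseteq P_{I_e}\subseteq E$.

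For $\Gamma(E)=E$: $E$ is closed, symmetric and contains $\Delta_{Y_A}$, so it suffices to show $E$ is closed under composition. Let $(u,v),(v,w)\in E$ and pick $I_1,I_2\in\C(A^{\alpha})$ with $(u,v)\in P_{I_1}$, $(v,w)\in P_{I_2}$. If $I_1=I_2=I$, apply Proposition~\ref{EI+} with its ``$I$'' taken to be $\overset{\multimapdotinv}{I_e}\cup\overset{\multimapdotinv}{I}$ and ``$M$'' taken to be $\bigcup_{a\in\overset{\multimapdotinv}{I}\cap L(A)}D_a$ (whose pairs agree off $\overset{\multimapdotinv}{I_e}\cup\overset{\multimapdotinv}{I}$): this gives $(P_I)^{+}=P_I$, so $(u,w)\in P_I$. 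If $I_e\in\{I_1,I_2\}$, say $I_2=I_e$ (the case $I_1=I_e$ follows from symmetry of $E$), note that pairs of $P_{I_e}=E_{\overset{\multimapdotinv}{I_e}}$ agree off $\overset{\multimapdotinv}{I_e}$; hence a witness $(y,y')\in E_{\overset{\multimapdotinv}{I_1}}$ for $(u,v)\in P_{I_1}$ also witnesses $(u,w)\in P_{I_1}$, because off $\overset{\multimapdotinv}{I_e}$ one has $w=v=y'$. The remaining case, $I_1\ne I_2$ with $I_1,I_2\ne I_e$, cannot occur, and proving this is the heart of the argument. The key geometric observation is that no interval of $\C(A^{\alpha})$ has right endpoint $\max A^{\infty}$, since $A^{\alpha}=A^{\infty}$ is perfect so $\max A^{\infty}$ is a left limit of $A^{\infty}$. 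Hence, assuming (by symmetry) that $I_1$ lies to the left of $I_2$, we have $r(I_1)\le\ell(I_2)<r(I_2)<\max A^{\infty}$, and the open interval $(r(I_2),\max A^{\infty})$, whose endpoints lie in $A^{\infty}$, contains a point $c\in L(A)$: indeed $A$ accumulates at $r(I_2)$ from the right and at $\max A^{\infty}$ from the left, and $A\subseteq C$ is nowhere dense, so the component of $(0,1)\setminus A$ through any point of $(r(I_2),\max A^{\infty})\setminus A$ is a contiguity interval of $A$ contained in $(r(I_2),\max A^{\infty})$, and one takes $c$ to be its left endpoint. Now the barrier fact applied to $(u,v)\in P_{I_1}$ and to $(v,w)\in P_{I_2}$ forces $v(c)=r(I_1)$ and $v(c)=r(I_2)$ at once; since $r(I_1)\ne r(I_2)$ in $[0,1]$ and $C$ embeds injectively in $Y$, this is absurd. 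This gives $E^{+}=E$, hence $\Gamma(E)=E$.

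Finally, $E\ne Y_A^{2}$: since $A^{\infty}$ is perfect, $\C(A^{\alpha})$ is infinite, so I would choose distinct $I_1,I_2\in\C(A^{\alpha})\setminus\{I_e\}$; by the same gap argument, $\overset{\multimapdotinv}{I_i}\cap L(A)\ne\emptyset$, so fix $a_i\in\overset{\multimapdotinv}{I_i}\cap L(A)$. Every $(y,y')\in E$ lies in some $P_I$, hence agrees off $\overset{\multimapdotinv}{I_e}\cup\overset{\multimapdotinv}{I}$; since $a_1\in\overset{\multimapdotinv}{I_1}$, which is disjoint from $\overset{\multimapdotinv}{I_e}$ and from $\overset{\multimapdotinv}{I}$ unless $I=I_1$, and similarly for $a_2$, at least one of $y(a_1)=y'(a_1)$, $y(a_2)=y'(a_2)$ must hold; so any pair differing at both $a_1$ and $a_2$ (such exist since $|Y|\ge 2$) lies outside $E$. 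The step I expect to be the main obstacle is the impossibility of composable pairs coming from two distinct non-$I_e$ pieces: it relies on the fine structure of $A^{\infty}$ near $\max A^{\infty}$ and on Proposition~\ref{perfectbarrier}, and it is precisely the phenomenon that defeats the naive guess $\Gamma(E_{\alpha})=E_{\alpha}$ and forces the passage to $E$.
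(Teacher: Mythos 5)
Your proof is correct and follows essentially the same route as the paper's: the same decomposition of $E$ into the $\Box$-saturated pieces $\stackrel {\overset{\multimapdotinv}{I_e}}{\Box} E_{\overset{\multimapdotinv}{I}}$, Proposition~\ref{EI+} for transitivity within a piece, the Proposition~\ref{perfectbarrier} ``barrier'' contradiction (via a point of $L(A)$ strictly between $r(I_2)$ and $\max A^{\infty}$) to exclude composition across two distinct non-$I_e$ pieces, and the same two-interval argument for $E \neq Y_A^2$. The only differences are cosmetic: you prove closedness by a direct sequence argument instead of writing $E=\stackrel {\overset{\multimapdotinv}{I_e}}{\Box} E_{\alpha}$ and invoking that $\Box$ preserves closedness, and you treat ``one piece is $I_e$'' as a separate easy case rather than deriving $J=I_e$ from the contradiction.
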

\begin{proof}
That $D_A \subseteq E$ follows from the fact that $D_A=E_0 \subseteq E_{\alpha} \subseteq E$.  As $\Box$ is distributive over unions, Remark \ref{rem:boxbasic}, we have that 
\[ E = \stackrel {\overset{\multimapdotinv}{I_e}}{\Box} \bigcup_{I \in \C(A^{\alpha})}  E_{\overset{\multimapdotinv}{I}} = \stackrel {\overset{\multimapdotinv}{I_e}}{\Box} E_{\alpha}.
\]
As $E_{\alpha}$ is closed and $\Box$ of a closed set is closed, we have that $E $ is closed.

To prove that $\Gamma (E) = E$, first we will show that $E^+ = E$. 
  By Proposition~\ref{EI+}, $ \left (\stackrel{I_e}{\Box} E_{\overset{\multimapdotinv}{I}} \right) ^+ = \stackrel{I_e}{\Box} E_{\overset{\multimapdotinv}{I}}$ for all $I \in \C (A^{\infty})$.   As such, it will suffice to show that if $I, J \in \C (A^{\infty})$  are distinct, $(y,y') \in \stackrel{I_e}{\Box} E_{\overset{\multimapdotinv}{I}}$, and $(y',y'') \in \stackrel{I_e}{\Box} E_{\overset{\multimapdotinv}{J}}$, then $(y,y'') \in E$. Without loss of generality, assume that $I$ is to the left of $J$.  Using the definition of $E_{\overset{\multimapdotinv}{I}}$, and Remark \ref{rem:boxbasic}, there exists $a \in \overset{\multimapdotinv}{I}$ and $(w_1,w_2) \in D_a$ such that  $ w_1 \stackrel  {\overset{\multimapdotinv}{I} \cup \overset{\multimapdotinv}{I_e}} {\asymp} y$ and $ w_2 \stackrel  {\overset{\multimapdotinv}{I} \cup \overset{\multimapdotinv}{I_e}} {\asymp}  y'$. Similarly, let $b \in \overset{\multimapdotinv}{J}$ and $(w_3,w_4) \in D_b$ such that  $ w_3 \stackrel  {\overset{\multimapdotinv}{I} \cup \overset{\multimapdotinv}{I_e}} {\asymp} y '$ and $ w_4 \stackrel  {\overset{\multimapdotinv}{I} \cup \overset{\multimapdotinv}{I_e}} {\asymp}  y''$. We now show that $J$ must equal to $I_e$. Suppose such is not the case. As $A^{\infty}$ is perfect we have that between any two elements of $\C(A^{\infty})$ there is an element of $\C(A^{\infty})$. Moreover, every element of $\C(A^{\infty})$ whose endpoint is not $0$ contains a point of $L(A)$. Hence, we may choose some $c \in L(A) \cap  J[ $ such that $ c \notin \overset{\multimapdotinv}{I_e}$. 
By Proposition~\ref{perfectbarrier}, the definition of $D_A$ and the fact that $(w_1,w_2) \in D_a$, we have that 
 $ w_1(c) = w_2(c) =r (I)$. As $ w_1 \stackrel  {\overset{\multimapdotinv}{I} \cup \overset{\multimapdotinv}{I_e}} {\asymp} y$,  $ w_2 \stackrel  {\overset{\multimapdotinv}{I} \cup \overset{\multimapdotinv}{I_e}} {\asymp}  y'$, $c \in L(A) \in J[ $ and $ c \notin ]\overset{\multimapdotinv}{I_e}$, we have that  $ y(c) = y'(c) =r (I)$. Arguing similarly with $(w_3,w_4)$, we have that $ y'(c) = y''(c) =r (J)$, yielding a contradiction as $I$ and $J$ are  disjoint intervals. Hence we have shown that $J =I_e$. We now show that $(y,y'') \in \stackrel{I_e}{\Box} E_{\overset{\multimapdotinv}{I}}$ to complete the proof of $\Gamma (E) = E$. It will suffice to show that $w_2 \stackrel  {\overset{\multimapdotinv}{I} \cup \overset{\multimapdotinv}{I_e}} {\asymp} y''$ as already $ w_1 \stackrel  {\overset{\multimapdotinv}{I} \cup \overset{\multimapdotinv}{I_e}} {\asymp} y$.
 Indeed, $y'' = y'$ on $L(A) \setminus \overset{\multimapdotinv}{I_e}$, and $y' = w_2$ on $L(A) \setminus (\overset{\multimapdotinv}{I} \cup \overset{\multimapdotinv}{I_e})$, implying that $y'' = w_2$ on $L(A) \setminus (\overset{\multimapdotinv}{I} \cup \overset{\multimapdotinv}{I_e})$, i.e., $w_2 \stackrel  {\overset{\multimapdotinv}{I} \cup \overset{\multimapdotinv}{I_e}} {\asymp} y''$.
 
 Finally, we show that $E \neq Y_A^2$ to complete the proof of the proposition. Choose $I, J \in \C(A^{\infty})$ with neither one being $I_e$ and $a \in L(A) \cap \overset{\multimapdotinv}{I} $ and $b \in L(A) \cap \overset{\multimapdotinv}{J}$. Let $(y,y') \in E$. By the definitions of $D_A$ and $E$, we have that either $y(a) = y'(a)$ or $y(b)= y'(b)$, implying that $E \neq Y_A^2$.

\end{proof}

\begin{corollary}
\label{cor:Acountable}
A set $A \in \Kcal$ is countable if and only if $\Gamma^{\infty}(D_A) = Y_A ^2$.
\end{corollary}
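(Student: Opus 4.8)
The plan is to prove the two directions separately, and I expect no serious obstacle: essentially all the work has already been carried out in Propositions~\ref{alphaEnlarge}, \ref{GammaEalpha} and \ref{perfectstable}, so what remains is to feed them the right input and dispose of two elementary facts about the Cantor--Bendixson derivative. By Remark~\ref{ignoreB} it is in any case harmless to argue with $D_A$ (the object of the statement) rather than $B_A$.

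\emph{Countable $\Rightarrow$ full.} Suppose $A$ is countable; the case $A=\emptyset$ is trivial. Then $A^{\infty}=\emptyset$, since a nonempty perfect set is uncountable, and a standard compactness argument shows $|A|_{CB}$ is a successor ordinal $\beta+1$ with $A^{\beta}$ a nonempty finite set and $A^{\beta+1}=\emptyset$. First I would apply Proposition~\ref{alphaEnlarge} (legitimate since $A^{\beta}\neq\emptyset$) to get $E_{\beta}\subseteq\Gamma^{\beta}(D_A)$, and then Proposition~\ref{GammaEalpha} (again $A^{\beta}\neq\emptyset$) to get $E_{\beta+1}\subseteq\Gamma(E_{\beta})\subseteq\Gamma^{\beta+1}(D_A)$. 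It then remains to identify $E_{\beta+1}$ with $Y_A^{2}$: since $A^{\beta+1}=\emptyset$, the only interval contiguous to $A^{\beta+1}$ is $(0,1)$, and $0\notin A$ gives $\overset{\multimapdotinv}{(0,1)}=(0,1)$, so $E_{\beta+1}=\stackrel{(0,1)}{\Box}D_A$; but the index set $L(A)$ of the product $Y_A$ satisfies $L(A)\subseteq A\subseteq(0,1)$, so the relation $\stackrel{(0,1)}{\asymp}$ is vacuous and hence $\stackrel{(0,1)}{\Box}D_A=Y_A^{2}$ (using $\Delta_{Y_A}\subseteq D_A\neq\emptyset$). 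Therefore $\Gamma^{\beta+1}(D_A)=Y_A^{2}$, so $\Gamma^{\infty}(D_A)=Y_A^{2}$. When $A$ is finite this just degenerates to $\beta=0$, $E_0=D_A$.

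\emph{Uncountable $\Rightarrow$ not full.} I would argue the contrapositive. If $A$ is uncountable, then $A^{\infty}=A^{|A|_{CB}}$ is a nonempty perfect set, so Proposition~\ref{perfectstable} applies with $\alpha=|A|_{CB}$ and produces a closed set $E$ with $D_A\subseteq E\subsetneq Y_A^{2}$ and $\Gamma(E)=E$. Since $\Gamma$ is monotone and $E$ is closed and $\Gamma$-stable, a routine transfinite induction (with limit stages handled by the closedness of $E$) gives $\Gamma^{\alpha}(D_A)\subseteq E$ for every ordinal $\alpha$, whence $\Gamma^{\infty}(D_A)\subseteq E\subsetneq Y_A^{2}$, so $\Gamma^{\infty}(D_A)\neq Y_A^{2}$.

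The only genuinely new verifications are the collapse $\stackrel{(0,1)}{\Box}D_A=Y_A^{2}$ (immediate once one observes $L(A)\subseteq(0,1)$) and the elementary structure of $|A|_{CB}$ for a countable compact set. The hard part is thus already behind us: the substance of the result lives in the ``enlargement'' lemma, Proposition~\ref{Enlarge}, which drives Proposition~\ref{GammaEalpha} and hence the successor step of the countable direction, and in the $\Gamma$-stability assertion of Proposition~\ref{perfectstable} for the perfect case. The corollary is then a matter of assembling these pieces.
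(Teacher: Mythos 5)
Your proof is correct and follows essentially the same route as the paper's: the countable direction via Propositions~\ref{alphaEnlarge} and \ref{GammaEalpha} together with the collapse $E_{(0,1)}=Y_A^2$, and the uncountable direction via the closed $\Gamma$-stable set of Proposition~\ref{perfectstable}. If anything you are slightly more explicit than the paper, which compresses the successor step $E_{\beta}\subseteq\Gamma(E_{\beta-1})$ into its citation of Proposition~\ref{alphaEnlarge}.
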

\begin{proof}
Let $\beta = | A |_{CB}$. 

If $A$ is countable, then $\beta$ is necessarily a successor ordinal and $A^{\beta} = \emptyset$. This implies that \[
E_{\beta}=E_{(0,1)}=\stackrel {(0,1)}{\Box} \bigcup _{a \in (0,1) \cap L(A)} D_a=Y_A^2. \] 

Applying  Proposition~\ref{alphaEnlarge} to $\alpha = \beta -1$, we have that $E_{\beta} \subseteq \Gamma^{\alpha}(D_A)$. Hence, $\Gamma^{\infty}(D_A) = Y_A ^2$.  

If $A$ is uncountable, then $\Gamma^{\infty}(D_A) \neq Y_A ^2$ follows simply from  Proposition~\ref{perfectstable}.
\end{proof}
\begin{proposition}\label{highrank}
For each countable ordinal $\alpha$, there exists $A \in \K$ so that the $\Gamma$-rank of $B_A$ is greater than $\alpha$.
\end{proposition}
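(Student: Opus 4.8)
The plan is to realize each prescribed countable ordinal $\alpha$ as a lower bound for the $\Gamma$-rank of $B_A$ by choosing $A$ to be a countable compact set whose Cantor--Bendixson rank is exactly $\alpha+1$ (or a successor just above $\alpha$), and then transferring that Cantor--Bendixson complexity through the chain of inclusions built in Propositions~\ref{alphaEnlarge}, \ref{GammaEalpha} and \ref{notfull}. Concretely, first I would fix a classical countable compact subset $A \subseteq (0,1)\cap C$ with $|A|_{CB} = \alpha+2$, chosen moreover so that for every $\beta \le \alpha+1$ the derived set $A^{\beta}$ has at least two distinct contiguous intervals $I,J \in \C(A^{\beta})$ with $\overset{\multimapdotinv}{I}\cap L(A)\neq\emptyset$ and $\overset{\multimapdotinv}{J}\cap L(A)\neq\emptyset$ (such $A$ exist: take a convergent sequence of scaled copies of a rank-$(\alpha+1)$ model, arranged inside the Cantor set, with enough left endpoints). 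Since $A$ is countable, Corollary~\ref{cor:Acountable} gives $\Gamma^{\infty}(D_A) = Y_A^2$, and by Remark~\ref{ignoreB} the $\Gamma$-rank of $B_A$ equals the $\Gamma$-rank of $D_A$, so it suffices to bound the latter from below.

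The core of the argument is then a ``slow stabilization'' claim: for every $\beta$ with $A^{\beta}\neq\emptyset$ (equivalently $\beta < |A|_{CB}$) we have $\Gamma^{\beta}(D_A) \neq Y_A^2$. Granting this, the $\Gamma$-rank of $D_A$ is at least $|A|_{CB} = \alpha+2 > \alpha$, which is exactly what we want. To prove the claim I would argue by contradiction using the sandwich
\[
D_A \subseteq E_{\beta} \subseteq \Gamma^{\beta}(D_A),
\]
the left inclusion being part of Proposition~\ref{basicprofEalpha} and the right inclusion being Proposition~\ref{alphaEnlarge}. If $\Gamma^{\beta}(D_A) = Y_A^2$, I would not get a contradiction directly from this, since $E_\beta$ sits below $\Gamma^\beta(D_A)$; instead I would need the complementary upper bound $\Gamma^{\beta}(D_A) \subseteq E_{\beta}$ for $\beta < |A|_{CB}$, which should follow by transfinite induction from Proposition~\ref{GammaEalpha} (giving $\Gamma(E_\beta) \subseteq E_{\beta+1}$ — note the statement of Proposition~\ref{GammaEalpha} is $E_{\alpha+1}\subseteq\Gamma(E_\alpha)$, so I actually need the reverse direction, obtained from the explicit description of the $E_\alpha$'s and the fact that $\Gamma$ applied to the closed set $E_\beta$ adds nothing outside $E_{\beta+1}$), together with the closedness of $E_\beta$ from Proposition~\ref{basicprofEalpha} to handle limit stages. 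Once $\Gamma^{\beta}(D_A) \subseteq E_{\beta}$ is in hand, Proposition~\ref{notfull} applied to two disjoint intervals $I,J \in \C(A^{\beta})$ each carrying a point of $L(A)$ in their $\overset{\multimapdotinv}{\phantom{I}}$-closure shows $E_{\beta}\neq Y_A^2$, hence $\Gamma^{\beta}(D_A)\neq Y_A^2$, completing the induction and the proof.

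I expect the main obstacle to be establishing the upper bound $\Gamma^{\beta}(D_A) \subseteq E_{\beta}$, i.e.\ showing that the $\Gamma$-operator does not ``jump ahead'' of the $E$-filtration. The inclusions proved in the excerpt all go the easy direction ($E$'s growing inside the $\Gamma$-iterates), and the reverse requires understanding precisely which new pairs $\Gamma$ can create from a set of the form $E_\beta$: one must check that a chain $(y_0,y_1),\dots,(y_{k-1},y_k)$ with consecutive pairs in $E_\beta$, or a limit of such, can only produce pairs that already lie in $E_{\beta+1}$, using that each $E_{\overset{\multimapdotinv}{I}}$ forces agreement off a single small interval and forces the value $t_a$ to the right of that interval (Propositions~\ref{entuallysame}, \ref{fixedhigherlevel}, \ref{perfectbarrier}). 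This is essentially a finer, ``one-level-at-a-time'' version of the computation behind Proposition~\ref{GammaEalpha}, and it is where the disjointness/barrier structure of the contiguous intervals of the iterated derived sets does the real work; the choice of $A$ with two separated left-endpoint-bearing components at every level $\beta \le \alpha+1$ is exactly what keeps $E_\beta$ from filling up $Y_A^2$ prematurely.
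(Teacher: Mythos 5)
Your overall skeleton matches the paper's (pick a countable $A$ of high Cantor--Bendixson rank, control the iterates $\Gamma^{\beta}(D_A)$ by an explicit filtration, and conclude non-fullness from disjoint blocked intervals as in Proposition~\ref{notfull}), and you correctly locate the crux in the missing upper bound. But the bound you propose, $\Gamma^{\beta}(D_A)\subseteq E_{\beta}$, is false, and no $A$ of the kind you describe can rescue it. The culprit is the coordinate $e=\max(A)$: its contiguous interval $I_e=(\max A,1)$ has empty right component, so $D_e=\{(y,y'):y\stackrel{e}{\asymp}y'\}$ carries no tail constraint at all. Chaining any $D_a$ with $D_e$ therefore already places in $D_A^{+}\subseteq\Gamma(E_0)$ pairs that differ simultaneously at $a$ and at $e$; such a pair lies in $E_1$ only if $a$ and $e$ sit in the same interval of $\C(A')$. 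Since your $A$ is required to have two separated left-endpoint-bearing components of $\C(A^{\beta})$ at every level, this fails already at $\beta=1$, so $\Gamma(E_0)\not\subseteq E_1$. In short, $\Gamma$ really does ``jump ahead'' of the $E$-filtration, by exactly the operator $\stackrel{e}{\Box}$.

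The paper's proof absorbs this defect instead of excluding it: it chooses $A$ with $A^{|A|_{CB}-1}=\{\max(A)\}$, so that $I_e$ is contiguous to $A^{\beta}$ at \emph{every} level and the extra freedom stays concentrated at the single coordinate $e$; it then proves the exact identity $\Gamma^{\beta}(D_A)=\stackrel{e}{\Box}E_{\beta+1}$ (note also the index shift: one application of $\Gamma$ advances the filtration by a full level plus the $\Box$-correction), and finally uses that $A^{|A|_{CB}-2}$ is infinite to find two blocked coordinates besides $e$, whence $\stackrel{e}{\Box}E_{|A|_{CB}-2}\neq Y_A^2$ and the rank of $D_A$, hence of $B_A$, is at least $|A|_{CB}-1>\alpha$. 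To repair your argument you would have to replace $E_{\beta}$ by $\stackrel{e}{\Box}E_{\beta+1}$ throughout, impose the paper's structural condition on $A$ so that analogous corrections do not proliferate at other coordinates at higher levels, and reprove non-fullness for the corrected sets --- which is essentially the paper's proof.
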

\begin{proof}  Let $\alpha$ be a countable ordinal. Let $A \in \K$ be a countable compact set such that $|A|_{CB} > \alpha+1$ and $A^{|A|_{CB}-1} = \{\max(A)\}$. For the sake of brevity, let $e = \max(A)$ and  $I_e \in \C(A)$ with $\ell(I_e) = e$. By our choices of $A$ we have that $I_e \in \C(A^{\beta})$ for all $\beta < |A|_{CB}$.

Let us first prove that $\Gamma(D_A) = \Gamma (E_0)= \stackrel{e}{\Box} E_{1}.$
For each $J \in \C(A)$, we have that \begin{equation}
   \left ( E_{\overset{\multimapdotinv}{J}} \cup E_{\overset{\multimapdotinv}{I_e}} \right )^+= \stackrel{e}{\Box}E_{\overset{\multimapdotinv}{J}}.
\end{equation} 
Next, if $J_1, J_2 \in \C(A)$ with $J_i \subseteq I_i \in \C(A')$ with neither $I_i$ equal to $I_e$, and $I_1 \neq I_2$, then we have that 
\begin{equation}
 \left ( \stackrel{e}{\Box}E_{\overset{\multimapdotinv}{J_1}}  \cup  \stackrel{e}{\Box}E_{\overset{\multimapdotinv}{J_2}} \right )^+ = \stackrel{e}{\Box}E_{\overset{\multimapdotinv}{J_1}}  \cup  \stackrel{e}{\Box}E_{\overset{\multimapdotinv}{J_2}}  .  
\end{equation} 
This is so because there is $I_3 \in \C(A')$ which is to the right of $I_1,I_2$ and to the left of $I_e$.  For $a \in L(A) \cap I_3$, we have that $t_{1_1}(a) \neq t_{I_2}(a)$. Hence (2) follows. Next, fix
$I \in \C(A')$ momentarily. Arguing as in the proof of Proposition~\ref{Enlarge} with $H_J = \stackrel{e}{\Box}E_{\overset{\multimapdotinv}{J}}$,  $J \in \C(A)$, $J \subseteq I$, and using (2) from above, we have that
\begin{equation}
   \overline {\left ( \bigcup _{J \in \C(A), J \subseteq I} \stackrel{e}{\Box}E_{\overset{\multimapdotinv}{J}} \right )^+} = \stackrel{e}{\Box}E_{\overset{\multimapdotinv}{I}}.  
\end{equation}
Finally, putting (1), (2) and (3) together, we have that 
\begin{align*}
    \Gamma (E_0) &= \overline{E_0^+} = \overline{ \left (  \bigcup_{J \in \C(A)}  E_{\overset{\multimapdotinv}{J}} \right ) ^+ } = \overline{ \left (  \bigcup_{J \in \C(A)}  E_{\overset{\multimapdotinv}{J}} \cup E_{\overset{\multimapdotinv}{I_e}} \right ) ^+ } = \overline{ \left (  \bigcup_{J \in \C(A)}  \left ( E_{\overset{\multimapdotinv}{J}} \cup E_{\overset{\multimapdotinv}{I_e}} \right )^+ \right ) ^+ } \\ & \stackrel{(1)}= \overline{ \left (  \bigcup_{J \in \C(A)}  \stackrel{e}{\Box}E_{\overset{\multimapdotinv}{J}} \right ) ^+ } = \overline  { \left (\bigcup_{I \in \C(A')} \left ( \bigcup _{J \in \C(A), J \subseteq I} \stackrel{e}{\Box}E_{\overset{\multimapdotinv}{J}} \right )^+ \right )^+} \\ &
    \stackrel{(2)}= \overline  { \bigcup_{I \in \C(A')} \left ( \bigcup _{J \in \C(A), J \subseteq I} \stackrel{e}{\Box}E_{\overset{\multimapdotinv}{J}} \right )^+} 
    = \overline  { \bigcup_{I \in \C(A')} \overline{\left ( \bigcup _{J \in \C(A), J \subseteq I} \stackrel{e}{\Box}E_{\overset{\multimapdotinv}{J}} \right )^+ }} \\
    &\stackrel{(3)}= \overline{ \bigcup_{I \in \C(A')} \stackrel{e}{\Box}E_{\overset{\multimapdotinv}{I}}} = \stackrel{e}{\Box} E_{1}.
\end{align*}
The last equality follows from the fact that $E_1$ is closed.

The general $\alpha$ case is analogous to the above.
Arguing in the same fashion, we have that  $\Gamma ^{\beta}  (D_A)=\Gamma ^{\beta}  (E_0) = \stackrel{e}{\Box} E_{\beta + 1}$, for all $\beta < |A|_{CB}$. As $A^{|A|_{CB}-2}$ is an infinite set, we have that $\stackrel{e}{\Box} E_{|A|_{CB}-2} \neq Y_A^2$, implying that the $\Gamma$-rank of $D_A$, and hence of $B_A$, is greater than or equal to $|A|_{CB}-1 > \alpha$.
\end{proof}

\subsection{Entropy Construction}\label{sec:2}

This section is less technical than the preivous one. The main objective is to translate Proposition \ref{highrank} to the language of entropy pairs, and check that the phase space is indeed a Cantor space.

In this subsection we will map $(Y_A,R_A, B_A)$ from the previous section to a dynamical system $(X_A, T_A)$ so that $\Gamma ^{\infty}  (B_A) = Y_A^2$ if and only $\Gamma ^{\infty}  ((E(X_A,T_A)) = X_A^2$.

Fix $A \in \K$ and let  $(Y_A,R_A,B_A)$ be as defined in the last section. We continue to use the notation of the previous section. Recall that for $a\in L(A)$, $I \in \C(A)$ with $a= \ell (I)$,

\[B_a =\{ (y, y')\in  Y_A^2:  y \stackrel  {a} {\asymp} y', \{y(a),y'(a)\} \in R \ \ 
 \& \ \ y|_{I[} = t_a|_{I[} \}.
\]

\[B_A =  \bigcup_{a  \in L(A)}  (B_a  \cup \Delta _{Y_A}). \]

We will construct $X_A$ as a factor. 
We let $Z_{A}=X\times Y_{A}$ be our intermediate space. Points in $Z_{A}$ are represented as $xy$, with $x\in X$ and $y\in Y_A$. For $a\in L(A)$, $I \in \C(A)$ with $a= \ell (I)$, we let 
\begin{align*}
F_{a}  & = \left \{(xy,xy^{\prime})\in Z_{A}^{2}:(y,y^{\prime})\in
B_{a}\text{ and }\{y(a),y^{\prime}(a)\}=\{x,T(x)\} \right \},\text{ and}
\end{align*}
\[F_{A}=\bigcup_{a \in L(A)}(F_{a} \cup \Delta_{Z_{A}}).\]
 \begin{lemma}\label{relationshipBnF}
 If $(y,y') \in B_A$, then there is $x \in X $ such that $(xy,xy') \in F_A$.
 \end{lemma}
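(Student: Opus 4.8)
The plan is to unwind the definitions of $B_A$ and $F_A$ and handle the two cases in the definition of $B_A$ separately. Recall that $B_A = \bigcup_{a \in L(A)} (B_a \cup \Delta_{Y_A})$, so if $(y,y') \in B_A$ then either $(y,y') \in \Delta_{Y_A}$, i.e. $y = y'$, or $(y,y') \in B_a$ for some $a \in L(A)$.

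\emph{Case 1: $y = y'$.} Here we simply pick any $x \in X$ whatsoever; then $(xy, xy') = (xy,xy) \in \Delta_{Z_A} \subseteq F_A$, and we are done. (Here $Z_A = X \times Y_A$ is nonempty since both factors are nonempty.)

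\emph{Case 2: $(y,y') \in B_a$ for some $a \in L(A)$, with $I \in \C(A)$ and $a = \ell(I)$.} By definition of $B_a$ we have $y \stackrel{a}{\asymp} y'$, $\{y(a), y'(a)\} \in R$, and $y|_{I[} = t_a|_{I[}$. The set $R \subseteq Y^2$ is the graph of the map $R$ (the restriction of $T$ to $Y$), so $\{y(a), y'(a)\} \in R$ means that one of $y(a), y'(a)$ is the $R$-image of the other; without loss of generality $y'(a) = R(y(a))$. Now I would set $x = y(a) \in Y \subseteq X$. Since $R$ is the restriction of $T$ to $Y$, we have $R(y(a)) = T(y(a)) = T(x)$, so $\{y(a), y'(a)\} = \{x, T(x)\}$ holds. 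Together with $(y,y') \in B_a$, this is exactly the condition defining $F_a$, so $(xy, xy') \in F_a \subseteq F_A$, completing the proof.

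The only point requiring a moment's care is the interpretation of the notation $\{y(a), y'(a)\} \in R$: since $R$ is a relation on $Y$ (identified with its graph), this should be read as the ordered-pair condition that $(y(a), y'(a)) \in R$ or $(y'(a), y(a)) \in R$, i.e. that the unordered pair is an edge of the graph of $R$; this is the same convention used throughout Section~\ref{topcon} in the definition of $B_a$. With that reading, the choice $x = y(a)$ (or $x = y'(a)$, whichever is the preimage) makes $\{y(a), y'(a)\} = \{x, T(x)\}$ automatic because $T|_Y = R$. There is no real obstacle here; the lemma is essentially a bookkeeping statement that every relation appearing in $B_A$ can be ``lifted'' to $Z_A$ by recording the relevant point of $X$ in the first coordinate, and the definition of $F_a$ was set up precisely so that this works.
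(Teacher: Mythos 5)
Your proof is correct and follows essentially the same route as the paper's: handle the diagonal case by choosing an arbitrary $x$, and otherwise take $x = y(a)$ (after assuming WLOG that $y'(a) = R(y(a))$) so that $\{y(a), y'(a)\} = \{x, T(x)\}$, which is exactly the membership condition for $F_a$. The only difference is that you spell out the identification $R = T|_Y$ explicitly, which the paper leaves implicit.
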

\begin{proof}
Let $(y,y') \in B_A$. If $y=y'$, then for any $x \in X$ we have that $(xy,xy') \in F_A$ as $\Delta_{Z_{A}} \subseteq F_A$. If $y \neq y'$, then $(y,y') \in B_a$ for some $a \in L(A)$ and $\{y(a),y'(a)\} \in R$. Without loss of generality, assume that $R(y(a)) = y'(a)$. Then, letting $x = y(a)$ we have that $(xy,xy') \in F_a \subseteq F_A$. 
\end{proof}
\begin{lemma}
  $F_{A}$ is an equivalence relation  which is a closed subset of $Z_A^2$. Moreover, $F_A$ is $T\times R_{A}$-invariant.
\end{lemma}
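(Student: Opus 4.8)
The plan is to verify in turn that $F_A$ is an equivalence relation, that it is closed, and that $(T\times R_A)(F_A)\subseteq F_A$; throughout, the organizing observation is that every pair in $F_A$ has the form $(xy,xy')$ with a \emph{common} first coordinate $x$, so the first coordinate merely records data and all the combinatorics take place in $Y_A$. Reflexivity is built in, since $\Delta_{Z_A}\subseteq F_A$. For symmetry it suffices to see each $F_a$ is symmetric, and this is immediate: $B_a$ is symmetric because $\stackrel{a}{\asymp}$ is symmetric, because $\{y(a),y'(a)\}\in R$ is an unordered condition, and because $y|_{I[}=t_a|_{I[}$ is unaffected by interchanging $y$ and $y'$ (they agree off $a$, and $a\notin I[$); and the extra clause $\{y(a),y'(a)\}=\{x,T(x)\}$ in $F_a$ is visibly symmetric. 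Hence each $F_a\cup\Delta_{Z_A}$, and so $F_A$, is symmetric, and the substance of being an equivalence relation is transitivity, i.e. $F_A^{+}=F_A$.

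Transitivity is the heart of the lemma and the step I expect to be hardest. Let $(p,q),(q,r)\in F_A$; if either pair is diagonal there is nothing to prove, so write $p=xy_1$, $q=xy_2$, $r=xy_3$ (the common $x$ being forced by the shared middle point $q$), with $(y_1,y_2)\in B_a$, $\{y_1(a),y_2(a)\}=\{x,T(x)\}$, $(y_2,y_3)\in B_b$, $\{y_2(b),y_3(b)\}=\{x,T(x)\}$, and with $y_1(a)\neq y_2(a)$, $y_2(b)\neq y_3(b)$; in particular $x\neq T(x)$. If $a=b$, then $y_1,y_2,y_3$ agree off $a$, and $y_1(a),y_2(a),y_3(a)$ all lie in the two-element set $\{x,T(x)\}$ with $y_1(a)\neq y_2(a)\neq y_3(a)$, forcing $y_1(a)=y_3(a)$ and hence $y_1=y_3$, so $(p,r)\in\Delta_{Z_A}\subseteq F_A$. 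The remaining case $a\neq b$ has to be excluded outright: since then $y_1(a)\neq y_2(a)=y_3(a)$ and $y_1(b)=y_2(b)\neq y_3(b)$, the points $y_1$ and $y_3$ differ at the two distinct coordinates $a,b$, so $(p,r)$ would have to be a diagonal element (i.e. $y_1=y_3$), which it is not. To rule out this case, suppose for concreteness $a<b$ and let $I_a,I_b\in\C(A)$ have $\ell(I_a)=a$, $\ell(I_b)=b$. Then $b$ lies in the right component of $(0,1)\setminus I_a$ (because $b\in A$ and $b>a$), so membership of $y_1$ in $B_a$ pins $y_1(b)=y_2(b)=t_a(b)\in A\subseteq C=\mathrm{fix}(Y,R)$; since $y_2(b)\in\{x,T(x)\}$ and $x\neq T(x)$ this forces $y_2(b)=T(x)\in C$, and then $(y_2,y_3)\in B_b$ together with $\{y_2(b),y_3(b)\}\in R$ and the $R$-fixedness of $y_2(b)$ forces $y_3(b)=x$ and $R(x)=T(x)$. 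One now reaches a contradiction by confronting these relations with the pinning conditions that $y_2$ carries \emph{simultaneously} from $B_a$ and from $B_b$ on the overlap of the two right components --- forcing $t_a$ and $t_b$ to agree there in a manner incompatible with the Cantor--Bendixson structure of $A$ unless $y_2=y_3$ --- and with the fact that the system $(Y,R)$ of Lemma~\ref{startup} admits no such configuration; the case $b<a$ is symmetric. Carrying out this incompatibility cleanly is the technical crux.

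Closedness follows the pattern of Proposition~\ref{closed}. First, each $F_a$ is closed: $B_a$ is closed, and the relation $\{(xy,xy'):\{y(a),y'(a)\}=\{x,T(x)\}\}$ is closed because $T$ is continuous, being the union of two closed sets, one cut out by $y(a)=x$ and $y'(a)=T(x)$, the other by $y(a)=T(x)$ and $y'(a)=x$. Now let $(z,z')$ be a limit point of $F_A$; equality of first coordinates passes to the limit, so $z=xw$, $z'=xw'$, and, as in Proposition~\ref{closed}, the $Y_A$-parts of the approximating pairs differ in at most one coordinate, so either $w=w'$ and $(z,z')\in\Delta_{Z_A}$, or there is a single coordinate $a$ at which $w$ and $w'$ differ, in which case all sufficiently close approximants have $Y_A$-parts differing exactly at $a$, hence lie in the closed set $F_a$, whence $(z,z')\in F_a\subseteq F_A$. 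Finally, $(T\times R_A)$-invariance is routine: $\Delta_{Z_A}$ is invariant, and if $(xy,xy')\in F_a$ with $y\neq y'$, then $x,T(x)\in Y$ (because $\{y(a),y'(a)\}\in R$), so $R(x)=T(x)$ and $R(T(x))=T(T(x))$; moreover $t_a$ takes values in $A\subseteq C$, which are $R$-fixed, so $R\circ t_a=t_a$ and hence $R_A(y)|_{I[}=t_a|_{I[}$. Checking that $R_A(y)\stackrel{a}{\asymp}R_A(y')$, that $\{R_A(y)(a),R_A(y')(a)\}\in R$, and that $\{R_A(y)(a),R_A(y')(a)\}=\{R(x),R(T(x))\}=\{T(x),T(T(x))\}$ then shows $(T\times R_A)(xy,xy')\in F_a\subseteq F_A$, completing the plan.
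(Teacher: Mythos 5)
Your handling of reflexivity, symmetry, closedness and $(T\times R_A)$-invariance is sound, and in places more detailed than the paper, which dispatches closedness in one line by citing Proposition~\ref{closed} and invariance by the single observation that $t_a$ takes values in $C$, a set of fixed points of $R$. The problem is transitivity, which you yourself call ``the technical crux'' and then do not carry out. You correctly reduce to showing that the case of two distinct coordinates $a<b$ cannot occur, and you correctly extract the key fact: since $(y_1,y_2)\in B_a$ and $y_1\stackrel{a}{\asymp}y_2$, the value $y_2(b)=t_a(b)$ lies in $A\subseteq C$ and is therefore a fixed point of $R$. But at that point you gesture at a contradiction obtained by ``confronting'' the pinning conditions from $B_a$ and $B_b$ on the overlap of the two right components and by an incompatibility with the Cantor--Bendixson structure of $A$. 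That is not where the contradiction lives, and no comparison of $t_a$ with $t_b$, and no ordinal structure, is needed. The paper closes the case locally at the single coordinate $b$ (its $a'$): $y_2(b)$ is a fixed point of $R$, yet membership of $(y_2,y_3)$ in $B_b$ demands $\{y_2(b),y_3(b)\}\in R$ with $y_2(b)\neq y_3(b)$, and this is exactly the configuration that the arrangement $C\subseteq$ fixed points of $R$ in Lemma~\ref{startup} was designed to forbid. The decisive step is therefore missing from your write-up, and the sketch you give of how to supply it points at the wrong mechanism.

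A secondary remark: your own partial analysis ends in the configuration $y_2(b)=T(x)$, $y_3(b)=x$, $R(x)=T(x)$, which you present as something still to be refuted; note that the branch $y_2(b)=x$ is already impossible because $x\in Y$ would then be $R$-fixed while $R(x)=T(x)\neq x$, and the surviving branch is the one the fixed-point obstruction at coordinate $b$ is meant to kill. Until that refutation is actually written down, the claim $F_A^{+}=F_A$ --- and hence the lemma --- is unproved. Everything else in your proposal (in particular the limit-point analysis for closedness, which mirrors Proposition~\ref{closed}, and the computation $\{R(y(a)),R(y'(a))\}=\{T(x),T(T(x))\}$ for invariance) matches the paper's intent and can stand as written.
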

\begin{proof}
That $F_A$ is a closed subset of $Z_A^2$ follows from the fact that $B_A$ is a closed subset of $Y_A ^2$ (Proposition \ref{closed}). $B_A$ is $R_A$-invariant as the range of $t_a$, $a \in L(A)$, is a subset of $C \subseteq fix(Y,R)$.  This implies that $F_A$ is  $T\times R_{A}$-invariant. That $\Delta_{Z_{A}}\subseteq F_{A}$ follows from the definition of $F_A$. As $B_A$ is symmetric, so is $F_{A}$.

Finally, we show that $F_A$ is transitive, i.e., $F_A^+ = F_A$. Let $(xy,xy^{\prime}),(xy^{\prime},x
y^{\prime\prime})\in F_{A}$. If $y = y'$ or $y' =y''$, then obviously 
$(xy,xy^{\prime\prime})\in F_{A}$. Hence, assume that $y\neq y^{\prime}\neq y^{\prime\prime}$.
Let $a, a^{\prime}\in
L(A)$ the only points where $y(a)\neq y^{\prime}(a)$ and $y^{\prime}
(a^{\prime})\neq y^{\prime\prime}(a^{\prime})$. By hypothesis we have that
$\{y(a),y^{\prime}(a)\}=\{y^{\prime}(a^{\prime}),y^{\prime\prime}(a^{\prime
})\}=\{                      x,Tx\}$. Since $y\neq y'$ then $Tx\neq x$. We next show that $a = a'$. To obtain a contradiction, assume that $a<a'$. Since $(y,y')\in B_a$ then $y'(a') = t_a(a')$ which by construction is a fixed point of $R$, yielding a contradiction. Hence,
$a=a^{\prime}$. Using $a=a^{\prime}$ and that $\{y(a),y^{\prime}(a)\}=\{y^{\prime}(a^{\prime}),y^{\prime\prime}(a^{\prime
})\}=\{                      x,Tx\}$, we obtain $y(a)=y''(a)$, implying that $y =y''$.  As $\Delta_{Z_{A}}\subseteq F_{A}$, we have that $(xy,xy'') =(xy,xy) \in F_A$, completing the proof.
\end{proof}

Let $X_{A}=Z_{A}/F_{A}$ endowed with the quotient topology. As $Z_{A}$ is a
compact metrizable space and $F_{A}$ a closed set and an equivalence relation, we
have that $X_{A}$ is a compact metrizable space \cite[Theorem 4.2.13]%
{engleking1989general}. Let $P_{A}:Z_{A}\rightarrow X_{A}$ be projection
induced by $F_{A}$ and define $T_{A}:X_{A}\rightarrow X_{A}$ by $T_{A}
([z]_{F_{A}})=[(T\times R_{A})(z)]_{F_{A}}$, where $[z]_{F_{A}}$ denotes the
equivalence class of $z\in Z_{A}$ in $F_{A}$. As $F_{A}$ is invariant under
$T\times R_{A}$, we have that $T_{A}$ is well-defined and continuous.
Moreover, we have that $P_{A}\circ(T\times R_{A})=T_{A}\circ P_{A}$. In
particular, $(X_{A},T_{A})$ is a factor $(Z_{A},T\times R_{A})$, i.e., the
following diagram commutes.
\[
\begin{tikzcd}
	Z_{A}\arrow[r, "T\times R_{A}"] \arrow[d,"P_{A}" ]
	&Z_{A}\arrow[d, "P_{A}" ] \\
	X_{A} \arrow[r,  "T_{A}" ]
	& X_{A} \end{tikzcd}
\]
As the product of mixing maps is mixing, we have that $T\times R_{A}$ is
mixing. As $P_{A}$ is surjective, and mixing is preserved under factor maps, we
have that $(X_{A},T_{A})$ is a mixing as well.





\begin{lemma}
\label{lem:E2}
Let $A\in \Kcal$. We have that
\[
E(X_{A},T_{A})\subseteq\{([xy]_{F_{A}},[x^{\prime}y^{\prime
}]_{F_{A}}):x,x^{\prime}\in X, (y,y^{\prime})\in B_{A}\}\subseteq E(X_{A},T_{A})^+
\]

\end{lemma}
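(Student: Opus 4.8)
The plan is to exploit that $(X_A,T_A)$ is a factor of $(Z_A, T\times R_A)$ via $P_A$, combined with the functorial behaviour of IE-pairs recorded in Theorem~\ref{thm:basic}. First I would compute $E(Z_A, T\times R_A)$ explicitly. Since $Z_A = X\times Y_A$ with $Y_A = \prod_{a\in L(A)} Y$ and $T\times R_A = T \times \prod_{a\in L(A)} R$, Theorem~\ref{thm:basic}(2), applied in its countable-product form exactly as in the proof of Lemma~\ref{startup}, gives
\[
E(Z_A, T\times R_A) = E(X,T)\times \prod_{a\in L(A)} E(Y,R) = X^2\times \Delta_{Y_A},
\]
where we use $E(X,T)=X^2$ (UPE) and $E(Y,R)=\Delta_Y$ from Lemma~\ref{startup}. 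Applying Theorem~\ref{thm:basic}(3) to the factor map $P_A$ then yields
\[
E(X_A,T_A) = (P_A\times P_A)\left(X^2\times \Delta_{Y_A}\right) = \{([xy]_{F_A}, [x'y]_{F_A}) : x,x'\in X,\ y\in Y_A\}.
\]

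The first inclusion is then immediate: since $\Delta_{Y_A}\subseteq B_A$, every pair $([xy]_{F_A},[x'y]_{F_A})$ already lies in the middle set upon taking $y'=y$.

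For the second inclusion, I would fix $x,x'\in X$ and $(y,y')\in B_A$ and show $([xy]_{F_A},[x'y']_{F_A})\in E(X_A,T_A)^+$. When $y=y'$ this pair is already in $E(X_A,T_A)$ by the displayed formula. When $y\neq y'$, the crucial input is Lemma~\ref{relationshipBnF}, which furnishes $x_0\in X$ with $(x_0y,x_0y')\in F_A$, i.e. $[x_0y]_{F_A}=[x_0y']_{F_A}$ in $X_A$. The pairs $([xy]_{F_A},[x_0y]_{F_A})$ and $([x_0y']_{F_A},[x'y']_{F_A})$ both belong to $E(X_A,T_A)$, since each has matching $Y_A$-coordinates; hence the chain $[xy]_{F_A},\ [x_0y]_{F_A}=[x_0y']_{F_A},\ [x'y']_{F_A}$ witnesses $([xy]_{F_A},[x'y']_{F_A})\in E(X_A,T_A)^+$.

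I do not anticipate a genuine obstacle here: the only points requiring care are invoking the countable-product version of Theorem~\ref{thm:basic}(2) and observing that passing through the identification $[x_0y]_{F_A}=[x_0y']_{F_A}$ genuinely produces a length-two composition in $E(X_A,T_A)$. The conceptual heart — that $F_A$ is engineered precisely so that a single $R$-move in one coordinate recorded by $B_A$ becomes one composition step in the IE-relation of the quotient — is already packaged in Lemma~\ref{relationshipBnF}, so this lemma is essentially a bookkeeping consequence of the preceding constructions.
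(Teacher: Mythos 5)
Your proof is correct and follows essentially the same route as the paper: compute $E(Z_A, T\times R_A)$ via the product formula, push it through the factor map $P_A$ to get $E(X_A,T_A)=\{([xy]_{F_A},[x'y]_{F_A})\}$, and then use Lemma~\ref{relationshipBnF} to produce the intermediate point $x_0$ (the paper's $x''$) that turns a $B_A$-pair into a length-two chain in $E(X_A,T_A)$. The only cosmetic difference is that you make the countable-product invocation of Theorem~\ref{thm:basic}(2) and the $y=y'$ case explicit, which the paper leaves implicit.
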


\begin{proof}
Recall from Lemma~\ref{startup} that 
 $E(Y_{A},R_{A})=\Delta_{Y_{A}}$ and  $E(X,T)=X^{2}$. Using these facts and  Theorem \ref{thm:basic} (2) we obtain that
\[
E(Z_{A},T\times R_{A})=\{(xy,x^{\prime}y):x,x^{\prime}\in X,y\in
Y_{A}\}.
\]
Using Theorem \ref{thm:basic} (3) we get
\[
E(X_{A},T_{A})=\{([xy]_{F_{A}},[x^{\prime}y]_{F_{A}}):x,x'\in X,y\in
Y_{A}\}. 
\]
It follows that
\[
E(X_{A},T_{A})\subseteq\{([xy]_{F_{A}},[x^{\prime}y^{\prime
}]_{F_{A}}):x,x^{\prime}\in X,\text{ and }(y,y^{\prime})\in B_{A}\}.
\]
To prove the second $\subseteq$, let $x, x' \in X$ and $(y,y')\in B_A$. Then, by Lemma~\ref{relationshipBnF}, there exists $x''\in X$ such that 
\[
[x''y]_{F_A}=[x''y']_{F_A}.
\]
Moreover, we have that \[
([xy]_{F_A},[x''y]_{F_A}),([x''y']_{F_A},[x'y']_{F_A})\in E(X_{A},T_{A}),
\]
implying that $([xy]_{F_A},[x'y']_{F_A}) \in E(X,T)^+$ and concluding the proof.
\end{proof}

\begin{theorem}
\label{thm:BtoX}
Let $A\in \Kcal$. Then, the entropy rank of $(X_A,T_A)$ is greater than or equal to the $\Gamma$-rank of $B_A$, and 
\[\Gamma^{\infty}(E(X_A,T_A)) = \{([xy]_{F_{A}},[x'y']_{F_{A}}):x,x'\in X, (y,y'
)\in\Gamma^{\infty}(B_{A})\}
.\] 
In particular, we have that $\Gamma ^{\infty}(B_A)=Y_A^2$ if and only $\Gamma ^{\infty}((E(X_A,T_A)) = X_A^2$.
\end{theorem}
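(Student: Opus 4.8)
The plan is to push the $\Gamma$-operation back and forth across the factor map $P_A\colon Z_A\to X_A$. For $M\subseteq Y_A^2$ set
\[
\Phi(M)=\{([xy]_{F_A},[x'y']_{F_A}):x,x'\in X,\ (y,y')\in M\},
\]
equivalently $\Phi(M)=(P_A\times P_A)\bigl(\{(xy,x'y'):x,x'\in X,\ (y,y')\in M\}\bigr)$. Since $P_A$ is continuous and all spaces in sight are compact metric, $\Phi$ carries closed sets to closed sets, commutes with arbitrary unions, and satisfies $\Phi(\overline M)=\overline{\Phi(M)}$. The computation inside the proof of Lemma~\ref{lem:E2} is exactly $\Phi(\Delta_{Y_A})=E(X_A,T_A)$, and Lemma~\ref{lem:E2} itself reads $E(X_A,T_A)\subseteq\Phi(B_A)\subseteq E(X_A,T_A)^+$; since $\Delta_{Y_A}\subseteq B_A$ we also get $\Delta_{X_A}\subseteq\Phi(\Delta_{Y_A})$.

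The heart of the proof is the commutation identity $\Gamma(\Phi(M))=\Phi(\Gamma(M))$ for every symmetric closed $M$ with $B_A\subseteq M$. Using $\Delta_{Y_A}\subseteq B_A\subseteq M\subseteq M^+$, the formal properties of $\Phi$ above, and $\Delta_{X_A}\subseteq\Phi(\Delta_{Y_A})$, this reduces to $\Phi(M^+)=\Phi(M)^+$. The inclusion $\Phi(M^+)\subseteq\Phi(M)^+$ is routine: an $M$-chain from $y$ to $y'$ lifts to a $\Phi(M)$-chain in $X_A$ by inserting arbitrary $X$-coordinates between consecutive links. For the reverse inclusion, given a $\Phi(M)$-chain $q_0,\dots,q_n$ in $X_A$ I would pick representatives $q_i=[u_iv_i]$, $q_{i+1}=[u_i'v_i']$ with $(v_i,v_i')\in M$, and note that each identification $[u_i'v_i']=[u_{i+1}v_{i+1}]$ is either an equality or an $F_a$-identification; the latter forces $u_i'=u_{i+1}$ and $(v_i',v_{i+1})\in B_a\subseteq M$, because $F_a$ relates only points sharing their $X$-coordinate and differing by a $B_a$-move. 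Splicing all the links (including the outer ones coming from $[xy]=[u_0v_0]$ and $[x'y']=[u_{n-1}'v_{n-1}']$) gives an $M$-chain from $y$ to $y'$, hence $([xy],[x'y'])\in\Phi(M^+)$. A transfinite induction — successor step from the identity, limit step from $\Phi(\overline{\cdot})=\overline{\Phi(\cdot)}$ and distributivity over unions, noting each $\Gamma^\beta(B_A)$ is symmetric, closed and $\supseteq B_A$ — then yields $\Phi(\Gamma^\beta(B_A))=\Gamma^\beta(\Phi(B_A))$ for all ordinals $\beta$.

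Let $\delta$ be the entropy rank of $(X_A,T_A)$, that is, the $\Gamma$-rank of $E:=E(X_A,T_A)$. Applying the monotone operator $\Gamma^\delta$ to $E\subseteq\Phi(B_A)\subseteq\Gamma(E)$ and using that $\Gamma^\delta(E)=\Gamma^\infty(E)$ is $\Gamma$-stable, I get $\Gamma^\delta(\Phi(B_A))=\Gamma^{\delta+1}(\Phi(B_A))=\Gamma^\infty(E)$, and therefore, by the commutation identity, $\Phi(\Gamma^\delta(B_A))=\Phi(\Gamma^{\delta+1}(B_A))=\Gamma^\infty(E(X_A,T_A))$. To upgrade the first equality to $\Gamma^\delta(B_A)=\Gamma^{\delta+1}(B_A)$ I use the observation that if $x\in X\setminus Y$ then $[xy]_{F_A}=[uv]_{F_A}$ forces $xy=uv$: an $F_a$-identification would require $\{x,Tx\}$ to be an edge of $R$, i.e.\ $\{x,Tx\}=\{p,Tp\}$ for some $p\in Y$, which is impossible since $Y$ is $T$-invariant and $x\notin Y$. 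Because $X\setminus Y\neq\emptyset$ (else $X^2=E(X,T)=E(Y,R)=\Delta_Y$ would force $X$ to be a point), any $(y,y')\in\Gamma^{\delta+1}(B_A)$ gives $([xy],[x'y'])\in\Phi(\Gamma^{\delta+1}(B_A))=\Phi(\Gamma^\delta(B_A))$ with $x,x'\in X\setminus Y$, and the observation then yields $(y,y')\in\Gamma^\delta(B_A)$. Hence $\Gamma^\delta(B_A)=\Gamma^\infty(B_A)$, so the $\Gamma$-rank of $B_A$ is at most $\delta$, and combining with $\Phi(\Gamma^\delta(B_A))=\Gamma^\infty(E(X_A,T_A))$ gives the displayed equality $\Gamma^\infty(E(X_A,T_A))=\{([xy],[x'y']):x,x'\in X,\ (y,y')\in\Gamma^\infty(B_A)\}$.

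The final assertion then follows: if $\Gamma^\infty(B_A)=Y_A^2$ the displayed formula gives $\Gamma^\infty(E(X_A,T_A))=\Phi(Y_A^2)=X_A^2$ since $P_A$ is onto and $Z_A=X\times Y_A$; conversely, if $\Gamma^\infty(E(X_A,T_A))=X_A^2$ then for any $(y,y')\in Y_A^2$ I pick $x,x'\in X\setminus Y$ and apply the injectivity observation to $([xy],[x'y'])\in X_A^2=\Phi(\Gamma^\infty(B_A))$ to get $(y,y')\in\Gamma^\infty(B_A)$. The step I expect to be the main obstacle is the reverse inclusion $\Phi(M)^+\subseteq\Phi(M^+)$, where one must use that the identifications defining $F_A$ never mix distinct $X$-coordinates and only introduce $B_a$-moves already present in $B_A$; the accompanying $X\setminus Y$ injectivity observation plays the same role for the rank comparison, and everything else is bookkeeping with monotone closure operators and compactness.
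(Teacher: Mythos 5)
Your proof is correct and follows essentially the same route as the paper: both hinge on the commutation identity $\Gamma^{\beta}(\Phi(B_A))=\Phi(\Gamma^{\beta}(B_A))$ (proved by transfinite induction with the key step $\Phi(M)^{+}=\Phi(M^{+})$, using that $F_A$-identifications preserve the $X$-coordinate and only introduce $B_a$-moves) combined with the sandwich $E(X_A,T_A)\subseteq\Phi(B_A)\subseteq E(X_A,T_A)^{+}$ from Lemma~\ref{lem:E2}. Your explicit injectivity observation --- that points $xy$ with $x\in X\setminus Y$ have singleton $F_A$-classes, so $\Phi$ is injective on subsets of $Y_A^{2}$ --- is a welcome addition that justifies passing from $\Phi(\Gamma^{\delta}(B_A))=\Phi(\Gamma^{\delta+1}(B_A))$ back to $\Gamma^{\delta}(B_A)=\Gamma^{\delta+1}(B_A)$, a step the paper's proof leaves implicit when it asserts that the $\Gamma$-ranks of $G$ and $B_A$ coincide.
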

\begin{proof}
Let
\begin{equation*}
 G=\{([xy]_{F_{A}},[x'y']_{F_{A}}):x,x'\in X,(y,y'
)\in B_{A}\}.  
\end{equation*}
By Lemma \ref{lem:E2} we have that 
\[
\Gamma^{\alpha} (E(X_{A},T_{A}) ) \subseteq \Gamma^{\alpha} (G) \}\subseteq \Gamma^{\alpha+1} (E(X_{A},T_{A})),
\]
for every countable ordinal $\alpha$. We next prove by induction   that
\begin{equation*}\label{eqngalpha}
   \Gamma^{\beta}(G)
=\{([xy]_{F_{A}},[x'y']_{F_{A}}):x,x'\in X,\text{ and }(y,y'
)\in\Gamma^{\beta}(B_{A})\}. \tag{*}
\end{equation*}
Assume the result holds for
every $\beta^{\prime}<\beta$. If $\beta$ is a successor, then it is easy to check
\[
(\Gamma^{\beta-1}(G))^+= \left \{([xy]_{F_{A}},[x^{\prime
}y^{\prime}]_{F_{A}}):x,x^{\prime}\in X,\text{ and }(y,y^{\prime}
)\in \left (\Gamma^{\beta-1}(B_{A}) \right )^+ \right \}.
\]
If $\beta$ is limit ordinal, one can show that
\[
 \bigcup _{\beta'<\beta} \Gamma^{\beta'}(E(X_{A},T_{A}))= \left  \{([xy]_{F_{A}},[x^{\prime
}y^{\prime}]_{F_{A}}):x,x^{\prime}\in X,\text{ and }(y,y^{\prime}
)\in\bigcup _{\beta'<\beta} \Gamma^{\beta'}(B_{A}) \right \}.
\]
Now using the continuity of $P_A$, we have that (\ref{eqngalpha}) holds.

Returning to the main proof, 
by (\ref{eqngalpha}) we have that the $\Gamma$-rank of $G$ is the same as the $\Gamma$-rank of $B_A$. As $E(X_A,T_A) \subseteq G$, we have that the $\Gamma$-rank of $G$ is less than or equal to the entropy rank of $(X_A,T_A)$, or equivalently, the entropy rank of $(X_A,T_A)$ is greater than or equal to $\Gamma$-rank of $B_A$. Finally, the second claim of the lemma follows from the fact that,
\[
\Gamma^{\alpha} (E(X_{A},T_{A}) ) \subseteq \left \{([xy]_{F_{A}},[x^{\prime}y^{\prime
}]_{F_{A}}):x,x^{\prime}\in X,\text{ and }(y,y^{\prime})\in \Gamma^{\alpha} (B_{A}) \right \}\subseteq \Gamma^{\alpha+1} (E(X_{A},T_{A})),
\]
for every countable ordinal $\alpha$. 
\end{proof}

\begin{corollary}\label{highcperank}
Let $\alpha$ be a countable ordinal. Then, there is a countable set $A \in \K$ such that $(X_A,T_A)$ has CPE and its entropy rank is greater than $\alpha$.
\end{corollary}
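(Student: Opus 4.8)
The plan is to assemble this corollary purely from the results already established in Sections~\ref{topcon} and~\ref{sec:2}, with no new construction. Fix a countable ordinal $\alpha$. First I would invoke Proposition~\ref{highrank} to obtain $A \in \K$ whose associated relation $B_A$ has $\Gamma$-rank greater than $\alpha$. The key observation at this point — and the only one requiring a moment's care — is that the witness furnished by the proof of Proposition~\ref{highrank} is a \emph{countable} compact set (indeed one of large Cantor--Bendixson rank, with $A^{|A|_{CB}-1}$ a singleton). We will need this countability in order to deduce CPE, so it matters that the high-$\Gamma$-rank example and the countable example can be taken to be one and the same.

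Next I would extract CPE. Since $A$ is countable, Corollary~\ref{cor:Acountable} gives $\Gamma^{\infty}(D_A) = Y_A^2$, and Remark~\ref{ignoreB} gives $\Gamma^{\infty}(B_A) = \Gamma^{\infty}(D_A)$, hence $\Gamma^{\infty}(B_A) = Y_A^2$. Feeding this into Theorem~\ref{thm:BtoX} yields $\Gamma^{\infty}(E(X_A,T_A)) = X_A^2$. Because $\Gamma^{\infty}(E(X_A,T_A))$ is by definition $\Gamma^{\beta}(E(X_A,T_A))$ where $\beta$ is the entropy rank of $(X_A,T_A)$, Theorem~\ref{thm:cpecharac} then says precisely that $(X_A,T_A)$ has CPE.

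Finally I would read off the lower bound on the entropy rank directly from Theorem~\ref{thm:BtoX}, which asserts that the entropy rank of $(X_A,T_A)$ is at least the $\Gamma$-rank of $B_A$; by our choice of $A$ via Proposition~\ref{highrank}, the latter exceeds $\alpha$. Thus $(X_A,T_A)$ is a CPE system of entropy rank $> \alpha$, which is the claim. I do not expect any serious obstacle here: the entire content of the corollary is in Propositions~\ref{highrank}, Corollary~\ref{cor:Acountable}, and Theorem~\ref{thm:BtoX}, and the proof is a short formal chaining of these, the one subtlety being the simultaneous countability and high-rank of the witness $A$ noted above.
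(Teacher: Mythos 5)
Your proposal is correct and follows essentially the same route as the paper: the paper's own proof takes the countable witness $A$ from Proposition~\ref{highrank}, applies Corollary~\ref{cor:Acountable} and Remark~\ref{ignoreB} to get $\Gamma^{\infty}(B_A)=Y_A^2$, and then invokes Theorem~\ref{thm:BtoX} for both the CPE conclusion and the rank lower bound. Your emphasis on the witness of Proposition~\ref{highrank} being simultaneously countable and of high $\Gamma$-rank is exactly the right point of care, and it is indeed satisfied by that proposition's construction.
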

\begin{proof}
Let $A \in \K$ be a countable set in the statement of  Proposition~\ref{highrank}. By Corollary~\ref{cor:Acountable} and Remark~\ref{ignoreB}, we have that $\Gamma^{\infty} (B_A) = Y_A^2$. Now by Lemma~\ref{thm:BtoX}, we have that the entropy rank of $(X_A, T_A)$ is greater than $\alpha$ and $\Gamma ^{\infty}((E(X_A,T_A)) = X_A^2$, i.e., $(X_A,T_A)$ has CPE.
\end{proof}
\begin{theorem}\label{CantorSpace}
For each $A\in \Kcal$, $X_{A}$ is a Cantor space.
\end{theorem}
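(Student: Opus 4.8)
The plan is to verify the three defining features of a Cantor space for $X_A$: it is compact metrizable (already established right before the statement, since $Z_A$ is compact metrizable and $F_A$ is a closed equivalence relation), it has no isolated points, and it is totally disconnected; by the cited fact that every such space is homeomorphic to the Cantor set, this suffices. Perfectness is the easy part. First I would observe that every $F_A$-class is countable: for $z=xy$, the transitivity of $F_A$ (already proved) gives $[z]_{F_A}=\{xy\}\cup\bigcup_{a\in L(A)}\{xy':(xy,xy')\in F_a\}$, and for each $a$ there is at most one such $y'$ (it must agree with $y$ off the coordinate $a$, satisfy $y|_{I[}=t_a|_{I[}$, and have $y'(a)$ the unique element of $\{x,Tx\}\setminus\{y(a)\}$). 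Since $L(A)$ is countable, $[z]_{F_A}$ is countable. As $Z_A=X\times Y_A$ is a perfect Polish space, a countable set has empty interior, so no $F_A$-class is open, and $X_A$ has no isolated points.

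For total disconnectedness I would exploit that $F_A$ identifies only points of $Z_A$ having the same $X$-coordinate, so the projection $Z_A\to X$ descends to a continuous surjection $q\colon X_A\to X$ with $q\circ P_A=\mathrm{pr}_X$. Since $X_A$ is compact and $X$ Hausdorff, $q$ is closed, and $X$ (a Cantor space) is zero-dimensional. By the classical inequality for the dimension of the domain of a closed surjection in terms of the base and the fibers (see \cite{engleking1989general}), $\dim X_A\le\dim X+\sup_{x\in X}\dim q^{-1}(x)=\sup_{x\in X}\dim q^{-1}(x)$, so it is enough to prove that every fiber $q^{-1}(x)$ is zero-dimensional; for compact metrizable spaces this is the same as being totally disconnected, and then $X_A$ is totally disconnected, finishing the proof.

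The remaining, and hardest, step is the fiberwise analysis. The fiber $q^{-1}(x)$ is the quotient of $\{x\}\times Y_A\cong Y_A$ by the relation $\sim_x$ induced by $F_A$: $y\sim_x y'$ iff $y=y'$, or $y,y'$ agree off one coordinate $a\in L(A)$ with $\{y(a),y'(a)\}=\{x,Tx\}$ and $y|_{I[}=t_a|_{I[}$, where $I\in\C(A)$ has left endpoint $a$. If $x\notin Y$ or $Tx=x$ the relation is trivial and $q^{-1}(x)\cong Y_A$ is a Cantor space. In the remaining case $x\in Y$, $Tx\neq x$, I would reduce once more by a closed map: collapsing the two points $x,Tx$ in $Y$ yields a Cantor space $Y_x$, and since $\sim_x$ only alters coordinates between these identified points, the coordinatewise quotient $\theta\colon Y_A\to\widehat{Y}_A:=\prod_{a\in L(A)}Y_x$ descends to a closed continuous map $\bar\theta\colon q^{-1}(x)\to\widehat{Y}_A$ onto the zero-dimensional space $\widehat{Y}_A$, whose fibers are quotients of sets $\{x,Tx\}^{S}$ (with $S\subseteq L(A)$) by the restriction of $\sim_x$. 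Here the key observation — and the crux of the whole theorem — is that a coordinate $a\in S$ can be flipped only when the tail condition $y|_{I[}=t_a|_{I[}$ holds, and because the range of every $t_a$ lies in $C\subseteq\mathrm{fix}(Y,R)$ while $Tx\neq x$ is not a fixed point, this condition is independent of the free coordinates of $S$ (a flip at $a$ is either outright impossible or forces all still-free coordinates of $S$ lying beyond $r(I)$ to take the value $Tx$); a short case analysis then shows each such quotient is a Cantor space or a finite set. Hence every fiber of $\bar\theta$, and therefore every $q^{-1}(x)$, is zero-dimensional, $X_A$ is totally disconnected, and $X_A$ is a Cantor space. I expect the main obstacle to be exactly this determination of which coordinates of $Y_A$ get collapsed, where the leverage is the interplay between the tail condition defining $B_a$ and the placement of the collapsed values inside $C$.
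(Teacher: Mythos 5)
Your proposal takes a genuinely different route from the paper's, and while the reduction machinery is sound, it stops short at the point where the actual content of the theorem lives. For comparison: the paper argues directly, noting that $P_A$ is at most two-to-one (so $X_A$ inherits perfectness from $Z_A$), and for total disconnectedness it exhibits, for any two inequivalent points $[xy]_{F_A}\neq[x'y']_{F_A}$, an explicit $F_A$-saturated clopen subset of $Z_A$ separating them --- splitting into the case $x\neq x'$ (take $B\times Y_A$ for a clopen $B\ni x$ missing $x'$) and the case $x=x'$ (pick $b\in L(A)$ with $y(b)\neq y'(b)$ and $y(b)\notin\{x,T(x)\}$ and build a saturated clopen cylinder around $xy$). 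Your perfectness argument via countability of the $F_A$-classes is fine, and the two applications of the Hurewicz dimension inequality for closed maps (first to $q\colon X_A\to X$, then to $\bar\theta$) are legitimate reductions.

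The gap is in the last step. You reduce everything to showing that each fiber $\bar\theta^{-1}(\hat y)$, i.e.\ each quotient of a set of the form $\{x,Tx\}^{S}$ by the restriction of $\sim_x$, is zero-dimensional, and you dispose of this with ``a short case analysis then shows each such quotient is a Cantor space or a finite set.'' That is precisely where all the difficulty of the theorem is concentrated, and it cannot be waved through: an at-most-two-to-one closed quotient of a Cantor space need not be zero-dimensional (the standard map of the Cantor set onto $[0,1]$ identifying the two endpoints of each complementary interval is exactly such a quotient). So one must genuinely use the combinatorics of which pairs get identified --- that a flip at $a$ pins the whole tail of coordinates beyond $r(I)$ to the values $t_a(c)\in C\subseteq\mathrm{fix}(Y,R)$, together with the condition $\{y(a),y'(a)\}\in R$ --- to manufacture $\sim_x$-saturated clopen sets separating inequivalent points of $\{x,Tx\}^{S}$. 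Your parenthetical (a flip is either impossible or forces the free tail to the value $Tx$) is the right starting observation, but it does not by itself exclude a connected quotient: in the delicate subcase where $Tx\in C$ and $t_a$ takes the value $Tx$ on a tail, the pinned configurations accumulate on one another and the separation has to be checked by hand. Until that analysis is actually written out, the proof is incomplete; the paper's explicit clopen-separation argument (its Case 2) is where this work gets done.
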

\begin{proof}
We will show that $X_A$ has no isolated points and $X_A$ is totally disconnected. 
To see that $X_A$ is totally disconnected, let  $([xy]_{F_{A}},[x'y^{\prime}]_{F_{A}})\in
X_{A}^{2}\backslash\Delta_{X_{A}}$. We will show that there exists a clopen set of $X_A$ which contains one but not the other.

Recall that $P_{A}:Z_{A}\rightarrow X_{A}$ is a surjection and $X_A$ is equipped with quotient topology. Hence, if $D \subseteq X_A$ and  $P_{A}^{-1}(D)$ is 
clopen in $Z_A$, then $D$ is clopen in $X_A$.

\textbf{Case 1:} Suppose that $x\neq x'$. As $X$ is zero-dimensional, we may choose a clopen set  $B\subseteq X$ such that
$x\in B$ and $x'\notin B$. Then, $D=B\times Y_{A}$ is a clopen set.
One can check that $P_{A}^{-1}([D]_{F_{A}})=D$. Hence, $[D]_{F_{A}}$ is a
clopen set such that $[xy]_{F_{A}}\in [D]_{F_{A}}$ and $[x^{\prime
}y^{\prime}]_{F_{A}}\notin [ D]_{F_{A}}$.

 \textbf{Case 2:} Suppose that $x = x'$. Since $[xy]_{F_{A}}\neq\lbrack xy^{\prime
}]_{F_{A}}$, without loss of generality, we may assume that there exists $b\in L(A)$ such
that $y(b)\neq y'(b)$ and $y(b)\notin\left\{x,T(x)\right\}.$ Let $B\subseteq X$
be a clopen set such that $y(b)\notin B$, $\{  x,T(x),y'(b)\}\subseteq B$, and using the continuity of $T$ let $F\subseteq B$  be a clopen set such that $x\in F$, $F\cup T(F)\subseteq B$.

We define 
\[
K_{a}=\left\{
\begin{array}
[c]{cc}
Y & \text{if }a\neq b\\
B^c\cap Y & \text{if }a=b
\end{array}
\right.
\]
and
\[
K=F\times \prod_{a\in L(A)}K_a\subseteq Z_A.
\]
Observe that for $k\in K$ and $k' \in [k]_{F_A}$, we have that $k(b) = k'(b)$. From this it follows that  $[k]_{F_A}\subseteq K$. 
Hence, $P_{A}^{-1}([ K]_{F_{A}\text{ }})= K$, implying that $[ K]_{F_{A}}$ is clopen. Furthermore, $[xy]_{F_{A}}
\in\lbrack  K]_{F_{A}}$ and $[xy^{\prime}]_{F_{A}}\notin\lbrack
 K]_{F_{A}}$. 
 
With these two cases we conclude that $X_A$ is totally disconnected.
 
 To see that $X_A$ has no isolated points, note that $X$ and hence $Z_A$ has no isolated points. Moreover, $P_A$ is at most two-to-one map, implying that $X_A$ has no isolated point. 
 \end{proof}
\begin{theorem}\label{mainmixincpe} Let $X$ be a Cantor space. Then, $\textup{Mix}(X) \cap \textup{CPE}(X)$ is a coanalytic subset that is not Borel. 
\end{theorem}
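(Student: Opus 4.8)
The plan is to combine the rank method with the construction carried out in Section~\ref{sec:mixingNonBorel}. First I would dispatch the easy half: by Remark~\ref{rem:mix} the set $\textup{Mix}(X)$ is Borel, and by Proposition~\ref{cpeiscoanalytic} the set $\textup{CPE}(X)$ is $\Pi^1_1$; since the intersection of a Borel set with a $\Pi^1_1$ set is again $\Pi^1_1$, we conclude that $\textup{Mix}(X)\cap\textup{CPE}(X)$ is coanalytic.

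For the non-Borel part, I would argue by contradiction using Theorem~\ref{overspill}. Recall from Theorem~\ref{rankpi11} that the entropy rank is a $\Pi^1_1$-rank on $\textup{CPE}(X)$. Suppose $\textup{Mix}(X)\cap\textup{CPE}(X)$ were Borel; then it is in particular $\Sigma^1_1$, and it is a subset of $\textup{CPE}(X)$, so Theorem~\ref{overspill} furnishes a countable ordinal $\alpha$ bounding the entropy rank of $(X,T)$ for every $T\in\textup{Mix}(X)\cap\textup{CPE}(X)$. To contradict this I would produce, for every countable ordinal $\alpha$, a mixing TDS on a copy of $X$ with CPE and entropy rank greater than $\alpha$. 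This is precisely the content of Corollary~\ref{highcperank}: for the countable set $A\in\K$ given there, the system $(X_A,T_A)$ is mixing (it is a factor of the mixing system $(Z_A,T\times R_A)$, as noted after Lemma~\ref{lem:E2}), has CPE, has entropy rank greater than $\alpha$, and $X_A$ is a Cantor space by Theorem~\ref{CantorSpace}. Since all Cantor spaces are homeomorphic, fixing a homeomorphism $h\colon X_A\to X$ and conjugating, the system $(X,h\circ T_A\circ h^{-1})$ is mixing, has CPE, and---because the set $E(\cdot,\cdot)$ of IE-pairs and the operator $\Gamma$ are defined purely in topological terms and hence are preserved under topological conjugacy---has the same entropy rank, greater than $\alpha$. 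This contradicts the bound $\alpha$, so $\textup{Mix}(X)\cap\textup{CPE}(X)$ is not Borel.

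I expect the only genuine subtlety to be bookkeeping: recording that the entropy rank is conjugacy-invariant (immediate, since $E(X,T)$ is intrinsic and $\Gamma$ only refers to the topology), and invoking Theorem~\ref{overspill} for the hypothetical $\Sigma^1_1$ subset $\textup{Mix}(X)\cap\textup{CPE}(X)$ of the $\Pi^1_1$ set $\textup{CPE}(X)$, rather than attempting to verify directly that the entropy rank restricts to a $\Pi^1_1$-rank on the smaller set. All of the real work---the intricate construction of $(X_A,T_A)$, the verification that $X_A$ is a Cantor space, and the computation that the entropy rank can be made arbitrarily large (Proposition~\ref{highrank}, Theorem~\ref{thm:BtoX}, Corollary~\ref{highcperank})---has already been carried out, so the proof of the theorem itself is short.
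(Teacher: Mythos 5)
Your proposal is correct and follows essentially the same route as the paper: both halves rest on the same ingredients (Remark~\ref{rem:mix} and Proposition~\ref{cpeiscoanalytic} for coanalyticity; Theorem~\ref{rankpi11}, Corollary~\ref{highcperank}, Theorem~\ref{CantorSpace}, and the boundedness theorem~\ref{overspill} for non-Borelness). The only difference is that you make explicit the conjugacy-invariance of the entropy rank and the transfer to $X$ via a homeomorphism of Cantor spaces, details the paper leaves implicit.
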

\begin{proof} That $\textup{Mix}(X) \cap \textup{CPE}(X)$ is coanalytic follows from the facts that  $\textup{Mix}(X)$ is Borel (Remark \ref{rem:mix}), and $\textup{CPE}(X)$ is coanalytic  (Proposition~\ref{cpeiscoanalytic}). By Proposition~\ref{rankpi11}, we have that entropy rank is a $\Pi^1_1$-rank. By Corollary~\ref{highcperank} and Theorem~\ref{CantorSpace}, we have that for each countable ordinal $\alpha$ there is a TDS on $X$ whose entropy rank is bigger than $\alpha$. Now from Theorem~\ref{overspill}, we have that $\textup{Mix (X)} \cap \textup{CPE}(X)$ is not Borel.
\end{proof}

\begin{remark}
\label{rem:densep copy(1)} It is not difficult to check that if $(X,T)$ and $(Y,R)$
have a dense set of  periodic points, then $(X_{A},T_{A})$ also has a dense set of periodic points. This implies that the family of TDSs on a Cantor space with Devaney chaos and having CPE is not Borel. 
\end{remark}

\section{Questions}
We finish the paper with some open problems.

In Section~\ref{sec:mixingNonBorel}, we proved that $\textup{Mix(X)} \cap \textup{CPE(X)}$ is coanalytic and not Borel. Although we constructed a correspondence whose domain is a complete coanalytic set, it seems that the  correspondence is not Borel. Hence, we could not conclude that $\textup{Mix(X)} \cap \textup{CPE(X)}$ is complete coanalytic. Assuming $\Sigma^1_1$-Determinacy,  every coanalytic non-Borel set is complete coanalytic \cite[Theorem 26.8]{Kechris}. However, such is not the case in ZFC, begging the following question.
\begin{question}
Is there a ZFC proof of the fact that $\textup{Mix(X)} \cap \textup{CPE(X)}$ is complete coanalytic?
\end{question}

In Theorem \ref{intcomplete} we proved that the family of interval maps that have CPE is complete coanalytic. It would interesting to determine if one can prove a similar result about smooth maps, where smooth could mean differentiable, $C^1$, etc.
\begin{question}
What is the descriptive complexity of  the family of smooth maps, defined on some manifold, having CPE?
\end{question}
An answer would likely depend on the topology on the space of smooth maps. 

The construction of  minimal TDS that have CPE but not UPE is, in general, complicated and technical \cite{song2009minimal}. It is not known if one can construct such maps with arbitrarily high ranks.
\begin{question}
Given a countable ordinal $\alpha$ is there a minimal TDS having CPE with entropy rank greater than $\alpha$? 
\end{question}
As we stated in Theorem~\ref{thm:blokh}, for mixing graph maps properties of UPE and CPE coincide. The following questions asks what happens in higher dimensions.
\begin{question}
Do CPE and UPE coincide for mixing maps on a manifold of dimension $2$ or greater? If not, what is their descriptive complexity?
\end{question}
The final question concerns $\textup{UPE(X)}$.
\begin{question}
In Corollary~\ref{cor:upeborel}, we proved that $\textup{UPE(X)}$ is Borel. What is its exact descriptive complexity? How does it vary as we change the topological space $X$?
\end{question}
For example, there are compact connected subsets of ${\mathbb R}^3$ which only admit identity and constant maps \cite{hcook}. For such a space, the collection of UPE is empty.

 
\bibliographystyle{plain}
\bibliography{references}

\noindent Udayan B. Darji, ubdarj01@louisville.edu,\\
{\em Department of Mathematics, University of Louisville,
Louisville,
KY 40292, USA.}

\noindent Felipe Garc\'ia-Ramos,  fgramos@conacyt.mx,\\
{\em  Physics Institute, Universidad Aut\'onoma de San Luis Potos\'i, Mexico}\\
{\em  Faculty of Mathematics and Computer Science, Jagiellonian University, Poland.}\\

\end{document}